\newtheorem{theorem}{Theorem}[section]
\theoremstyle{definition}
\newcommand{\eps}[1]{{#1}_{\varepsilon}}
\newtheorem{theo}{Theorem}[section]
\newtheorem{rem}{Remark}
\newtheorem{lem}[theorem]{Lemma}
\newtheorem{propo}{Proposition}
\newtheorem{defi}[theorem]{Definition}
\newtheorem{cor}{Corollary}
\newcommand{\WT}{\widetilde}
\newcommand{\OV}{\overline}
\newcommand{\DST}{\displaystyle}
\newcommand{\LEQ}{\leqslant}
\newcommand{\GEQ}{\geqslant}
\renewcommand{\eps}{{\varepsilon}}
\renewcommand{\div}{{\rm div\,}}
\newcommand{\sign}{{\rm sign\,}}
\renewcommand{\a}{\alpha}
\renewcommand{\b}{\beta}
\renewcommand{\d}{\delta}
\renewcommand{\c}{\gamma}
\newcommand{\f}{{\varphi}}
\newcommand{\p}{\partial}
\newcommand{\s}{\sigma}
\renewcommand{\t}{\theta}
\renewcommand{\O}{\Omega}
\newcommand{\R}{{\rm I\! R}}
\newcommand{\N}{{\rm I\! N}}
\newcommand{\LdN}{{\mathcal L}^{2,N}}
\newcommand{\D}{{\mathcal D}}
\newcommand{\V}{{\mathcal V}}
\newcommand{\calP}{{\mathcal P}}
\newcommand{\WdLpq}{{W^2L^{p,q}(\O)}}
\newcommand{\Lpq}{{L^{p,q}(\O)}}
\newcommand{\LNu}{{L^{N,1}(\O)}}
\newcommand{\WdLNu}{{W^2L^{N,1}(\O)}}
\newcommand{\LNua}{{L^{\frac N{N-1+\a}}(\O)}}
\newcommand{\ORA}{\vec}
\renewcommand{\OV}{\overline}
\newcommand{\curl}{{\rm curl\,}}
\renewcommand{\exp}{{exp}}
\newcommand{\measure}{{\rm measure\,}}
\renewcommand{\ln}{{\rm Log\,}}
\newcommand{\bmor}{{{\rm bmo}_r}}
\newcommand{\bmo}{{{\rm bmo}}}
\newcommand{\loc}{{\rm loc}}
\newcommand{\hin}{\hbox{ in }}
\newcommand{\eqd}{(\ref{eq2})\,} 
\newcommand{\eqq}{(\ref{eq4})\,} 
\def\Max{\mathop{\rm Max\,}}
\title {Linear diffusion with singular absorption potential and/or unbounded convective flow: the   weighted space approach }
\date{}
\begin{document}
	
\maketitle

\centerline{\scshape J.I. D\'iaz and D. G\'omez-Castro}
\medskip
{\footnotesize
 \centerline{Instituto de Matematica Interdisciplinar \& Dpto. de Matematica Aplicada,}
   \centerline{Universidad Complutense de Madrid}
   \centerline{ Plaza de las Ciencias, 3, 28040 Madrid, Spain}
   \centerline{ {jidiaz@ucm.es}, {dgcastro@ucm.es}}
} 

\medskip

\centerline{\scshape Jean Michel Rakotoson\footnote{corresponding author}}
\medskip
{\footnotesize
 \centerline{ Universit\'e de Poitiers }
   \centerline{Laboratoire de Math\'ematiques et Applications - UMR CNRS 7348 - SP2MI, France}
   \centerline{Bd Marie et Pierre Curie,  T\'el\'eport 2,  F-86962 Chasseneuil Futuroscope Cedex, France}
   \centerline{Jean-Michel.Rakotoson@math.univ-poitiers.fr}
}

\medskip

\centerline{\scshape Roger Temam}
\medskip
{\footnotesize
	\centerline{  Institute for Scientific Computation and Applied Mathematics  }
	\centerline{  Indiana University }
	\centerline{  Bloomington, Indiana 47405, U.S.A. }
	\centerline{ temam@indiana.edu}
}

\bigskip

\begin{abstract}
	In this paper we prove the existence and uniqueness of very weak solutions to linear diffusion equations involving a singular absorption potential and/or an unbounded convective flow on a bounded open set of $\R^N$. In most of the paper we consider homogeneous Dirichlet boundary conditions but we prove that when the potential function grows faster than the distance to the boundary to the power -2 then no boundary condition is required to get the uniqueness of very weak solutions. This result is new in the literature and must be distinguished from other previous results in which such uniqueness of solutions without any boundary condition was proved for degenerate diffusion operators (which is not our case). Our approach, based on the treatment on some distance to the boundary weighted spaces, uses a suitable regularity of the solution of the associated dual problem which is here established. We also consider the delicate question of the differentiability of the very weak solution and prove that some suitable additional hypothesis on the data is required since otherwise the gradient of the solution may  not be integrable on the domain. 
	\\
	\textbf{Keywords } linear diffusion equations, singular absorption potential, unbounded convective flow, no boundary conditions, dual problem, local Kato inequality, distance to the boundary weighted spaces.
	\\
	\textbf{MSC } 35J75, 35J15, 35J25, 35J67, 35J10, 76M23
\end{abstract}

\section{Introduction}
In this paper we want to develop a weighted space approach to study the existence, uniqueness and regularity of linear diffusion equations involving singular and unbounded coefficients of the type
\begin{equation}\label{eq1}
-\Delta\omega+\ORA u\cdot \nabla\omega+V \omega=f \hbox{ on }\O,
\end{equation}
where $V$ is a  very singular potential being in general non negative and  locally integrable. 
To fix ideas, we shall consider mainly the case of Dirichlet boundary conditions
\begin{equation}\label{eq1*}
\omega=0\hbox{ on }\p\O,
\end{equation}
but  our weighted space approach can  also be adapted to the case of Neumann boundary conditions and, what is more remarkable, to the case of {\bf no~boundary~conditions~on}~$\p\O$ (but still getting the uniqueness of solutions) for some specially singular potentials (see the  subsection \ref{ss322} in section 4). Here $\O$ is an open bounded smooth (for instance with $\p\O$ of class $C^{2,1}$) of $\R^N$, $N \GEQ 2$, (the case $N=1$ and $u=$constant is considerably simpler)  . 
The external forcing term $f(x)$ will be assumed such  that
\begin{equation}\label{eq1II}
f\in L^1(\O;\d)\end{equation}
where the weight in this space is given by
\begin{equation}\label{eq1III}
\d(x)=d(x,\p\O)
\end{equation}
(sharper results will require some slight restrictions to (\ref{eq1II}) (see for instance  section \ref{s4}). We recall that (\ref{eq1II}) is optimal in the cases $V\equiv 0$ and $\ORA u=\ORA 0$ as it can be shown by  explicitly computing the  Green kernel for special domains.\\
Although we shall indicate later the detailed assumptions on the data, we anticipate now that we shall always assume that the convective flow vector $\ORA u $ satisfies
\begin{equation}\label{eq1IV}
\begin{cases}
\ORA u\in L^N(\O)^N,\ \div\ORA u=0 &\hin\ \ {\mathcal D'}(\O)\hbox{ and }\\
\ORA u\cdot\ORA n=0&\hbox{on\ }\p\O
\end{cases}
\end{equation}
where $\ORA n$ denotes the unit exterior normal vector to $\p\O$. Notice that, due to (\ref{eq1IV}), the weak solution notion adapted to equation (\ref{eq1}) is equivalent to the  one defined for the treatment of the equation in divergent form that is
\begin{equation}\label{eq1V}
-\Delta \omega+\div(\ORA u\,\omega)+V\omega=f\quad \hin\ \O.
\end{equation}
It is well-known that the mathematical treatment  of diffusion equations such  as (\ref{eq1}) \big(or (\ref{eq1V})\big) leads to quite satisfactory results (in view of some applications) when the data $f$, $\ORA u$ and $V$ are assumed to be bounded. Nevertheless, the main interest of this work concerns the limit cases in which $V(x)$ is assumed to be a singular function (mainly with its singularity located on $\p\O$) and/or when $\ORA u$ is an unbounded vector (satisfying (\ref{eq1IV})). Let us indicate some relevant applications leading to the consideration of such limit cases~:
\begin{enumerate}
	\item {\it The vorticity equation in fluid mechanics.} Equation (\ref{eq1}) can be derived from the stationary Navier-Stokes in 2D
	\begin{equation}\label{eq1VI}
	-\Delta\ORA u+(\ORA u\cdot \nabla)\ORA u+\nabla p=\ORA F
	\end{equation}
	taking the curl of the equation and setting
	\begin{equation}\label{eq1VII}
	f= \ORA F\cdot\ORA k,\qquad \omega=\curl\ORA u\cdot\ORA k,
	\end{equation}
	where $\ORA k$ is the last element of the canonical basis in $\R^3$ (see e.g. \cite{Temam1}). Nevertheless, as far as we know no satisfactory theory is available in the literature under the general condition that $\ORA F\cdot\ORA k\in L^1(\O;\d)$.
	\item{\it  Schr\"odinger equation with singular potentials}.
	It is well-known that the consideration of the bound states $\psi(x,t)=e^{-iEt}\omega(x)$ leads to the stationary Schr\"odinger equation
	\begin{equation}\label{eq1VIII}
	-\Delta \omega+V(x)\omega=E\omega\qquad \hin\ \R^N.
	\end{equation}
	The Heisenberg  uncertainty principle makes specially interesting the consideration of potentials which are critically singular on $\p\O$ more precisely, such that
	\begin{equation}\label{eq1IX}
	V(x)\GEQ \dfrac c{\d(x)^2},\qquad a.e\ x\in\O,
	\end{equation}
	for some $c>0$, which implies that $\omega=\dfrac{\p\omega}{\p\ORA n}=0$ on $\p\O$, so that we  can assume that $\omega\equiv 0$ on $\R^N-\O$ (see \cite{Dinter, 12b}). Here we shall not consider any eigenvalue problem like (\ref{eq1VIII}) but the study of (\ref{eq1}) for potentials $V(x)$ satisfying (\ref{eq1IX}) will be very useful for later works in this direction.
	\item{\it Linearization of singular and/or degenerate nonlinear equations}. For many different purposes, it is very convenient to ``approximate'' the solutions of quasilinear diffusion equations of the type 
	\begin{equation}\label{eq1X}
	-\Delta \f(w)+\div\big(\ORA\phi(w)\big)+g(w)=f(x)\qquad\hin\ \O
	\end{equation}
	by the solutions of the associated linearized equation. This is what appears, for instance, in the study of the stability of the associated parabolic or hyperbolic equations and also in some control problems associated with (\ref{eq1X}). Usually, it is assumed  that $\f$ is a strictly increasing function. So by considering $\t:=\f(w)$ we get
	\begin{equation}\label{eq1XI}
	-\Delta\t+\div\big(\ORA\psi(\t)\big)+h(\t)=f(x)\qquad\hin\ \O,
	\end{equation}
	with
	\begin{equation}\label{eq1XII}
	\begin{cases}
	\ORA\psi:\R\to\R^N,\quad\ORA\psi=\ORA\phi\circ\f^{-1},\\
	h=g\circ\f^{-1}.
	\end{cases}
	\end{equation}
	
	Now, assume that $\t_\infty(x)$ is a given solution of (\ref{eq1XI}), satisfying, for instance, $\t_\infty=0$ on $\p\O$. Then the  ``formal linearization'' of equation (\ref{eq1XII}) around the solution $\t_\infty(x)$ coincides with equation (\ref{eq1}) when we take
	\begin{equation*}\ORA u(x):=\ORA\psi\big(\t_\infty(x)\big)\end{equation*}
	and 
	\begin{equation*}V(x)=h'\big(\t_\infty(x)\big).\end{equation*}
	What makes difficult the study of the corresponding problem (\ref{eq1}) is the fact that in
	many cases relevant in the reaction-diffusion theory (see e.g. \cite{HernandezManceboVega2002}) functions $\ORA\psi'(r)$
	and $h'(r)$ present a singularity at $r=0$ and so, at least on $\p\O$, the coefficients $\ORA u$ and $\ORA V$ are singular. A qualitative information on the behavior of $\t_\infty(x)$ near $\p\O$ allows us  to get the precise information about the singularities of $\ORA u$ and/or $V$ near $\p\O$ \big(which, for instance, is of the type  (\ref{eq1IX})\big).
	
	\item{\it Shape optimization in Chemical Engineering}. When dealing with the problem of shape optimization for chemical reactors and applying technics of shape differentiation,  it was shown that if $g\in W^{2,\infty}(\R)$, then the solutions $u_0$ of the problem
	\begin{equation}\label{eq1XIII}
	\begin{cases}
	-\Delta u+g(u)=f,&\O,\\
	u=1,&\p\O,\end{cases}
	\end{equation}
	are differentiable with respect to the domain in the sense of Hadamard \cite{Had}  and  after developed in Murat and  Simon \cite{MS, MS2}    and the derivative $u'$ in the direction of a deformation $\t\in W^{1,\infty}(\R^n,\R^n)$ is the solution of the problem
	\begin{equation}
	\begin{cases}
	-\Delta u'+g'(u_0)u'=0,\\
	u'+\t\cdot\nabla u\in H^1_0(\O).
	\end{cases}
	\end{equation}
	Applying the theory developed for the  general case (\ref{eq1}), we can give a meaning to the shape derivative  if the domain   is not smooth as, for example, for root type kinetics (see \cite{DG, GC}). These nonlinear terms $g(u)$ are known in chemistry as Freundlich kinetics and have signifiant importance. Once again, taking $V(x)\equiv g'\big(u_0(x)\big) $ we arrive to problem~(\ref{eq1}).
	
\end{enumerate}

Some  previous papers dealing with data in $L^1(\O;\d)$ and/or singular potentials (with usually   $\ORA u=\ORA 0$) are 
\cite{DR1, DRJFA, RakoJFA, AbergelRako, MerkerRako, RabookLinear, VV} 
(see also the references therein).\\

We also mention that sometimes it is possible to get conclusions for the stationary problem (\ref{eq1}) (with $\ORA u=\ORA 0$) through the consideration of the associated
evolution equations (see e.g.  \cite{BrCa98}, \cite{BrezisKato} and its references).\\

In this paper we shall work with the notion of  ``very weak solutions'' (v.w.s.) of problem~(\ref{eq1}).
\begin{defi}\label{debdef}{\bf (Very weak solutions of problem (\ref{eq1})).}\ \\
	Let $f$ be in $L^1(\O;\d)$ and $\ORA u\in L^{N,1}(\O)^N$ with $\div(\ORA u)=0$ in $\D '(\O)$, $\ORA u\cdot\ORA n=0$ on $\p\O$,  $V$  measurable and  non negative function. A very weak solution $\omega$ of (\ref{eq1}) is a function $\omega\in L^{N',\infty}(\O)$ satisfying
	\begin{equation}\label{eq1XV}
	V\omega\in L^1(\O;\d)\hbox{ and } \int_\O\omega\big[-\Delta \phi-\ORA u\cdot\nabla\phi+V\phi\big]dx=\int_\O f\phi\, dx,
	\end{equation}
	for all $\phi\in C^2(\OV\O)$ with $\phi=0$ on $\p\O$, if $V\in L^1(\O;\delta)$, or for all $\phi \in C^2_c(\O)$ if $V\in L^1_{loc}(\O).$\\
\end{defi}

Notice that we look for a function in the  space $L^{N',\infty}(\O)$ where $N'=\frac N{N-1}$ instead of $\omega~\in ~L^1(\O)$ as usual,  in order to get more general assumptions on  $\ORA u$ and $V$.\\

We also also point out that our study will be concentrated in the case of  ``absorption'' potentials $V(x)\GEQ 0$ a.e. $x\in\O$. In  fact, as we shall see later, the study is also applicable to some general potentials  such that e.g. $V(x)\GEQ-\lambda$ with $0<\lambda<\lambda_1$ ($\lambda_1$ being the first eigenvalue of the Laplacian on $\O$ with zero Dirichlet boundary condition). As we shall show, this does not induce a restriction on the growth of the singularity of such absorption potentials near $\p\O$ (in contrast with the well-known results for {\bf negative} potentials, see e.g. \cite{BrCa98}).

The detailed definition of the Lorentz spaces $L^{p,q}(\O)$ and some other spaces which  we shall use in our study will be the object of Section 2 of this paper. Other preliminary
results and the statement of some of our main conclusions will be also presented there.\\

The proof of the existence  and uniqueness of a very weak solution (v.w.s.) for (\ref{eq1})  needs a deep study of the dual problem associated with (\ref{eq1})
\begin{equation}\label{eq1XVI}
\begin{cases}
-\Delta\phi-\ORA u\cdot\nabla \phi+V\phi=T &\hin\ \ \O,\\
\phi=0&\hbox{ on\ }\p\O.
\end{cases}
\end{equation}
Notice the change of sign  in the convection term.  We anticipate that in some cases no boundary condition will be assumed on $\phi$.\\

In Section 3,  we discuss, depending on $V$ and $\ORA u$, the existence and the regularity of the solution of the dual problem. After this, we shall be concerned with the existence of the very  weak solution in $L^{N',\infty}(\O)\cap L^1(\O;V\d)$, when $V\GEQ0$ is locally integrable. We will show that the very weak  solution $\omega$ of equation (\ref{eq1}) under zero Dirichlet  or   Neumann  boundary condition  has its  gradient in the Sobolev-Lorentz weighted space   $W^1L^{1+\frac1N,\infty}(\O;\d)$
in particular we shall get the estimate
\begin{equation}\label{eq1XVII}
\int_{\{x:|\nabla\omega|(x)<\lambda\}}\delta(x)dx\LEQ\dfrac{constant}{\lambda^{1+\frac1N}}\hbox{ for all }\lambda>0,
\end{equation}
under the mere assumption $\ORA u\in L^{N,1}(\O)^N$. Thus, we can conclude that $\nabla \omega\in L^1_{loc}(\O)$.\\

The question of uniqueness of v.w.s. given by (\ref{eq1XV}), when $V$ is only in $L^1_{loc}(\O)$ is one of the major difficulties in this general framework. When $V$ is sufficiently integrable, say $V\in L^{N,1}(\O)$, then we derive the uniqueness thanks to the regularity of the dual problem. If $V$ is only locally  integrable, but $V$ is bounded from below by $c\d^{-r}, \ r>2$ near the boundary, then the v.w.s. is unique even when no boundary condition is specified on $\p\O$ (but we additionally know that $V\omega\in L^1(\O;\d)$).\\
The uniqueness proof relies on the  $L^1(\O;\d)$-accretiveness property of the operator (see \cite{Ra2})  $T\OV \omega=-\Delta \OV\omega +\div(\ORA u\,\OV\omega)$ when $\OV\omega\in L^1(\O;\d^{-r})\cap W^{1,1}_{loc}(\O)$. This is given through the following local version of  the Kato's type inequality
\begin{equation}\label{eq1XVIII}
\int_\O\OV\omega_+T^*\psi\,dx\LEQ\int_\O\psi\,\sign_+(\OV\omega)T\OV\omega\,dx,\hbox{ whenever }T\OV\omega\in L^1_{loc}(\O),\ \psi\in\D(\O),
\end{equation}
and a special approximation of test function $\f$ in $C^2(\OV\O)$ by a sequence of functions of the type $\f_n(x)=\d(x)^rh_n(x)$ with $h\in C^2_c(\O)$ and $r>0$ (see Lemma \ref{l800}).
We point out that, besides the concrete interest of (\ref{eq1XVIII}) in itself; such  an inequality has many consequences since it allows to apply the semigroup operators theory on suitable functional spaces.\\
Concerning very weak solutions (where no differentiability is asked to the function $\omega$), a natural question (originally set by H. Br\'ezis in 1972 when $\ORA u=0$) is then : when should we have $|\nabla \omega|$ in $L^1(\O)$? The answer to this question will require some suitable additional  integrability conditions on $f$ and  $\ORA u$.\\

Note that   for proving some additional integrability for the very weak solutions $\omega$ is a delicate task. Indeed, we shall show that for some special cases of $\ORA u\in C^{0,\a}(\OV\O),\ \a>0$, there exists $f\in L^1_+(\O;\d)$ such that $||\omega||_{L^{N'}}=+\infty$ when $N\GEQ3$.This leads to an additional question :  under what conditions could we improve the integrability of $\omega$, to say $\omega\in L^{N'}(\O)$? The answer to this  question is also one of the main results of this paper.\\

Before stating the study of the main equation (\ref{eq1}), we shall recall some notations and  functional spaces that we shall use.
\section{Notations, preliminary definitions and results}
Before stating our main results concerning equation (\ref{eq1}) we  need to recall some notations and some functional spaces which are relevant for the study of the ``dual problem'' (\ref{eq1XVI}) 
under very general regularity assumptions on the coefficients $\ORA u$ and $T$.

\begin{defi}{\bf(  $\bmo(\R^N)$)} \cite{gold}.\\
	A locally integrable function $f$ on  $\R^N$ is said to be in $\bmo(\R^N)$ if
	\begin{equation*}
	\!\!\!\!\!\!\!\!\!\!\!\!\sup_{0<{\rm diam\,}(Q)<1}\frac1{|Q|}\int_Q|f(x)-f_Q|\,dx+\!\!\!\!\!\!\sup_{ {\rm diam\,}(Q)\GEQ1}\frac1{|Q|}\int_Q|f(x)|dx
	\end{equation*}
	\begin{equation*}
	\equiv||f||_{\bmo(\R^N)}<+\infty,
	\end{equation*}
	where the supremum is taken over all cube $Q\subset\R^N$ the sides of which are parallel to the coordinates axes.\\
	Here $\DST f_Q=\dfrac1{|Q|}\int_Qf(y)dy$.
\end{defi}

\begin{defi}{\bf( $\bmo_r(\O)$ )} \cite{Chang, CDS}.\\
	A locally integrable function $f$ on a Lipschitz bounded domain $\O$ is said to be in $\bmo_r(\O)$  ($r$ stands for restriction) if 
	\begin{equation}\label{eqPW1}
	\!\!\!\!\!\!\!\!\!\!\!\!\sup_{0<{\rm diam\,}(Q)<1}\frac1{|Q|}\int_Q|f(x)-f_Q|\,dx
	+\int_\O|f(x)|dx
	\equiv||f||_{\bmo_r(\O)}<+\infty,\end{equation}
	where the supremum is taken over all cube $Q\subset\O$ the sides of which are parallel to the coordinates axes.\\
	In this case, there exists a function $\WT f\in \bmo(\R^N)$ such that
	\begin{equation}\label{eqPW2}
	\WT f\Big|_\O=f\hbox{ and } ||\WT f||_{\bmo(\R^N)}\LEQ c_\O\cdot||f||_{\bmo_r(\O)}.
	\end{equation}
\end{defi}
\begin{rem}\ \\
	The above definition  adapted to the case where the domain $\O$ is  bounded,   is equivalent to the definition given in \cite {CDS, Chang}. The main property (\ref{eqPW2}) is due to  P.W Jones \cite{J}.\\
	This extension result implies that $\bmo_r(\O)$ embeds continuously into $L_{exp}(\O)$ (a space  which we shall introduce below in Definition \ref{d5}.)
\end{rem}

\begin{defi}{\bf (Campanato space $\LdN(\O)$.)}\\  
	A function $u\in\LdN(\O)$ if
	\begin{equation*} ||u||_{L^2(\O)}+\sup_{x_0\in \O, r>0}\Big[r^{-N}\int_{Q(x_0,r)\cap\O}|u-u_r|^2\,dx\Big]^{\frac12} :=||u||_{\LdN(\O)}<+\infty.
	\end{equation*}
	Here
	\begin{equation*}
	u_r:=\frac1{|Q(x_0;r)\cap\O|}\int_{Q(x_0;r)\cap\O}u(x)\,dx.
	\end{equation*}
\end{defi}
In fact the two above definitions are equivalent :
{}{
	\begin{theo}\label{t9}{\cite{RabookLinear}}\\
		For a Lipschitz bounded domain $\O$ one has 
		\begin{equation*}\LdN(\O)=\bmo_r(\O),\hbox{ with equivalent norms.}\end{equation*}
	\end{theo}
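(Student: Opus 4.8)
\section*{Proof proposal}

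The plan is to prove the two continuous inclusions $\LdN(\O)\hookrightarrow\bmor(\O)$ and $\bmor(\O)\hookrightarrow\LdN(\O)$ separately. Throughout I use that for a cube $Q(x_0,r)$ one has $|Q(x_0,r)|\approx r^N$, so that the Campanato seminorm is comparable to the $L^2$-mean-oscillation seminorm $\sup_Q\big(|Q\cap\O|^{-1}\int_{Q\cap\O}|u-u_r|^2\,dx\big)^{1/2}$. The inclusion $\LdN\hookrightarrow\bmor$ is essentially Jensen's inequality, while the reverse one is the delicate part, whose engine is the John--Nirenberg inequality together with the Jones extension (\ref{eqPW2}).

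For $\LdN(\O)\hookrightarrow\bmor(\O)$: the term $\int_\O|u|\,dx$ is bounded by $|\O|^{1/2}\|u\|_{L^2(\O)}$ via Cauchy--Schwarz, hence by $\|u\|_{\LdN}$. For the oscillation term, take any cube $Q\subset\O$ with $0<{\rm diam}(Q)<1$; for a suitable center $x_0$ and radius $r$ one has $Q=Q(x_0,r)\cap\O$ and $f_Q=u_r$, so by $|Q|\approx r^N$ and Jensen's inequality $|Q|^{-1}\int_Q|u-u_Q|\,dx\LEQ\big(|Q|^{-1}\int_Q|u-u_Q|^2\,dx\big)^{1/2}\LEQ C\|u\|_{\LdN}$. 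Taking the supremum closes this direction.

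For $\bmor(\O)\hookrightarrow\LdN(\O)$: first extend, by (\ref{eqPW2}), to $\WT f\in\bmo(\R^N)$ with $\|\WT f\|_{\bmo(\R^N)}\LEQ c_\O\|f\|_{\bmor(\O)}$. The key analytic input is the John--Nirenberg inequality, which makes a $\bmo(\R^N)$ function locally exponentially integrable and thereby controls every $L^p$-mean oscillation by the $L^1$-mean oscillation; for $p=2$ this yields $|Q|^{-1}\int_Q|\WT f-\WT f_Q|^2\,dx\LEQ C\|\WT f\|_{\bmo}^2$ on all cubes of diameter less than one. Now fix $x_0\in\O$ and $r$ with ${\rm diam}\,Q(x_0,r)<1$ and use $u=\WT f$ on $\O$: since the mean minimizes the $L^2$ deviation over $Q\cap\O$, $\int_{Q\cap\O}|u-u_r|^2\LEQ\int_{Q\cap\O}|\WT f-\WT f_Q|^2\LEQ\int_Q|\WT f-\WT f_Q|^2\LEQ C|Q|\,\|\WT f\|_{\bmo}^2$, whence $r^{-N}\int_{Q\cap\O}|u-u_r|^2\LEQ C\|f\|_{\bmor}^2$. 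For the large radii $r\GEQ r_0$ the factor $r^{-N}$ is bounded and, again by minimality of the mean, $\int_{Q\cap\O}|u-u_r|^2\LEQ\|u\|_{L^2(\O)}^2$; since $\O$ is bounded, $\bmor(\O)\hookrightarrow L_{\exp}(\O)\hookrightarrow L^2(\O)$ (as recorded in the Remark) gives $\|u\|_{L^2(\O)}\LEQ C\|f\|_{\bmor}$, controlling both the large-cube part and the $L^2$ term of the Campanato norm. Summing the two regimes bounds $\|u\|_{\LdN}$ by $C\|f\|_{\bmor}$.

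The main obstacle is this reverse inclusion, and within it the John--Nirenberg inequality, which is exactly what upgrades the $L^1$-control of oscillation built into the $\bmor$ norm to the $L^2$-control demanded by the Campanato seminorm. The only genuinely domain-dependent difficulty is the treatment of cubes straddling $\p\O$, where $f$ cannot be integrated outside $\O$; here the Jones extension (\ref{eqPW2}), available precisely because $\O$ is Lipschitz, reduces every boundary cube to a full cube in $\R^N$ on which John--Nirenberg applies, while the mean-minimization trick lets one compare the two definitions' reference means $f_Q$ and $u_r$ without extra hypotheses (the measure-density property $|Q(x_0,r)\cap\O|\approx r^N$ of Lipschitz domains being what keeps all constants uniform). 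The remaining bookkeeping between the regimes $r\LEQ r_0$ and $r\GEQ r_0$ is routine.
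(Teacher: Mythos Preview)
The paper does not supply a proof of this theorem; it is merely stated with a citation to \cite{RabookLinear}, so there is no in-paper argument to compare against. Your proposal is therefore assessed on its own.

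Your argument is correct and is the standard route to this equivalence. The easy direction $\LdN\hookrightarrow\bmor$ is indeed just Cauchy--Schwarz/Jensen, and you handle the $L^1$ term in the $\bmor$ norm correctly. For the hard direction you identify the two essential ingredients: the Jones extension (\ref{eqPW2}), which lets you replace boundary-straddling cubes $Q(x_0,r)\cap\O$ by full cubes in $\R^N$, and the John--Nirenberg inequality, which upgrades the $L^1$ mean oscillation to an $L^2$ one. The mean-minimization trick to pass from $\WT f_Q$ to $u_r$ and the split into small/large radii, with the large-$r$ regime handled via $\bmor\hookrightarrow L_{\exp}\hookrightarrow L^2$, are the right bookkeeping steps. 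One small point worth making explicit: in the first direction you only need cubes $Q\subset\O$ (those are the ones appearing in the $\bmor$ seminorm), so there is no boundary issue there and the identification $Q=Q(x_0,r)\cap\O$ is automatic; your wording suggests you have this in mind, but it could be said more cleanly.
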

}
We set \begin{equation*}L^0(\O)=\Big\{v:\O\to\R\hbox{ Lebesgue measurable}\Big\}\end{equation*} and we denote    by $L^p(\O)$ the usual Lebesgue space $1\LEQ p\LEQ +\infty$.
Although it is not too standard, we shall use the notation $W^{1,p}(\O)=W^1L^p(\O)$ for the  associate Sobolev space. We shall need the following definitions:
\begin{defi}{\bf (of the distribution function and monotone rearrangement.)}\\
	Let $u\in L^0(\O)$. The distribution function of $u$ is the decreasing function 
	\begin{equation*}m=m_u:\R\mapsto[0,|\O|] \end{equation*} 
	\begin{equation*}m_u=m_u(t)=\measure\big\{ x:u(x)>t\big\}=|\big\{u>t\big\}|.\end{equation*}
	The generalized inverse $u_*$ of $m$ is defined by
	\begin{equation*}u_*(s)=\inf\Big\{t:|\big\{u>t\big\}|
	\LEQ s\Big\},\quad s\in[0,|\O|[\end{equation*}
	and is called the decreasing rearrangement of $u$. We shall set $\O_*=]0,|\O|\,[.$
\end{defi}
We recall now the following definitions :
{}{
	\begin{defi}\label{d5}\ \\
		Let $1\LEQ p\LEQ +\infty,\ 0<q\LEQ+\infty$ :\\
		$\bullet$ If $q<+\infty$, one defines the following norm for $u\in L^0(\O)$
		\begin{equation*}||u||_{p,q}=||u||_{L^{p,q}}:=\left[\int_{\O_*}\left[t^{\frac1p}|u|_{**}(t)\right]^q\frac{dt}t\right]^{\frac1q}\hbox{ where }|u|_{**}(t)=\dfrac1t\int_0^t|u|_*(\s)d\s.\end{equation*}
		$\bullet$ If $q=+\infty$,
		\begin{equation*}||u||_{p,\infty}=\sup_{0<t\LEQ|\O|}t^{\frac1p}|u|_{**}(t).\end{equation*}
		The space $L^{p,q}(\O)=\Big\{ u\in L^0(\O):||u||_{p,q}<+\infty\Big\}$ is called a {\bf Lorentz space}.\\
		$\bullet$ If $p=q=+\infty,$
		\qquad$L^{\infty,\infty}(\O)=L^\infty(\O).$\\
		The dual of $L^{1,1}(\O)$ is called $L_\exp(\O)$
		
	\end{defi}
}
\begin{rem}
	We recall that $L^{p,q}(\O)\subset L^{p,p}(\O)=L^p(\O)$ for any $p>1,\ q\GEQ1$.
\end{rem}
For $\a>0$, we define
\begin{align*}
	L^\a_\exp(\O)&=\left\{v:\O\to\R,\ \DST\sup_{0<s<|\O|}\dfrac{|v|_*(s)}{\left(1-\ln\dfrac s{|\O|}\right)^\a}<+\infty\right\},\\
	L^p(\ln L)^\a&=\left\{f:\O\to\R,\ \DST\int_{\O_*}\left[\left(1-\ln\dfrac s{|\O|}\right)^\a|f|_*(s)\right]^pds<+\infty\right\}.
\end{align*}
When there is no  possible confusion, we denote by the same symbol the space product $V^N$ and $V$.\\
We recall also that if $v,\ u\in L^1(\O)$, then 
\begin{equation*}\DST v_{*u}\dot=\lim_{\lambda\searrow 0}\frac{(u+\lambda v)_*-u_*}\lambda\end{equation*} exists in a weak sense and it is called the relative rearrangement of $ v$ with respect to $u.$ 
More precisely, we have the following result  (see   \cite {MT, RakoRR}).
\begin{theo}\label{t1t1}
	\ \\ Let $\O$ be a bounded measurable set in $\R^N$\!, $u$ and $v$ two functions in $L^1(\O)$  and let  $w~:~\overline\O_*~\to~\R$ be defined by:
	\begin{equation*}w (s)=\int_{\{u>u_*(s)\}}\hspace{-1cm}v(x)dx
	+\int_0^{s-|u>u_*(s)|} \Big(v\Big|_{\{u=u_*(s)\}}\Big)_*(\sigma)d\sigma,\end{equation*}
	\hbox{where }$v\Big|_{\{u=u_*(s)\}}$\hbox{ is the restriction of $v $ to $\{u=u_*(s)\}$.}\\
	Then 
	\begin{equation*}\DST
	\frac{(u+\lambda v)_*-u_*}\lambda\underset{\lambda\to0}\rightharpoonup
	\frac{d w }{ds}\hin\  \begin{cases}L^p(\O_*)\hbox{-weak}&if\  v\in L^p(\O),\ 1\LEQ p<+\infty\\ 
	L^\infty(\O_*)\hbox{-weak-star}&if\ v\in L^\infty(\O)\end{cases}.\end{equation*}
	Moreover,  $\DST\left|\frac{dw }{ds}\right|_{L^p(\O_*)}\LEQ |v|_{L^p(\O)}.$
\end{theo}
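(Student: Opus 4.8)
The plan is to identify the weak limit of the difference quotients by computing their \emph{primitives}, and to obtain weak compactness and the final estimate from the non-expansivity of the decreasing rearrangement. First I would record the contraction property $\|f_*-g_*\|_{L^p(\O_*)}\LEQ\|f-g\|_{L^p(\O)}$, valid for $1\LEQ p\LEQ+\infty$. Applied to $f=u+\lambda v$ and $g=u+\mu v$ it shows that $\lambda\mapsto(u+\lambda v)_*$ is Lipschitz from $\R$ into $L^p(\O_*)$ with constant $\|v\|_{L^p(\O)}$; hence the family of quotients $q_\lambda:=\lambda^{-1}\big((u+\lambda v)_*-u_*\big)$ is bounded in $L^p(\O_*)$ by $\|v\|_{L^p(\O)}$. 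This already gives weak (resp. weak-$\ast$) relative compactness and, by weak lower semicontinuity of the norm, the ``Moreover'' bound $\big|\tfrac{dw}{ds}\big|_{L^p(\O_*)}\LEQ|v|_{L^p(\O)}$ once the limit is identified. I would carry out the argument for $\lambda\searrow0$, in accordance with the definition of $v_{*u}$ recalled above.

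The heart of the proof is to show, for each $s\in\O_*$, that $\lambda^{-1}\big(\int_0^s(u+\lambda v)_*-\int_0^s u_*\big)\to w(s)$. The tool is the Hardy--Littlewood maximal characterisation $\int_0^s f_*(\sigma)\,d\sigma=\sup\{\int_E f\,dx:\ E\subset\O,\ |E|=s\}$. Write $A(s)=\{u>u_*(s)\}$ and $B(s)=\{u=u_*(s)\}$. For the \emph{lower} bound I would test the supremum at a fixed maximiser of $u$ of the special form $E_0=A(s)\cup C$, with $C\subset B(s)$, $|C|=s-|A(s)|$, chosen so that $v$ is as large as possible on $C$; this yields $\int_{E_0}v=\int_{A(s)}v+\int_0^{\,s-|A(s)|}\big(v|_{B(s)}\big)_*(\sigma)\,d\sigma=w(s)$, whence $\int_0^s(u+\lambda v)_*\GEQ\int_0^s u_*+\lambda\,w(s)$. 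For the \emph{upper} bound I would take a genuine maximiser $E_\lambda=\{u+\lambda v>(u+\lambda v)_*(s)\}$ (up to a level set), use $\int_{E_\lambda}u\LEQ\int_0^s u_*$ to get $\int_0^s(u+\lambda v)_*\LEQ\int_0^s u_*+\lambda\int_{E_\lambda}v$, and prove $\limsup_{\lambda\searrow0}\int_{E_\lambda}v\LEQ w(s)$. Since $(u+\lambda v)_*(s)\to u_*(s)$, the set $E_\lambda$ localises: it contains $A(s)$ up to measure $o(1)$, carries $o(1)$ mass on $\{u<u_*(s)\}$, and on $B(s)$ reduces to a super-level set $\{v>\beta_\lambda\}\cap B(s)$ whose measure constraint forces $\beta_\lambda\to\big(v|_{B(s)}\big)_*(s-|A(s)|)$, so that its $v$-integral converges to $\int_0^{\,s-|A(s)|}\big(v|_{B(s)}\big)_*$. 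Combining the two bounds gives the claimed limit $w(s)$.

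Finally I would pass from the primitives to the functions. For $v\in L^\infty(\O)$ the quotients $q_\lambda$ are bounded in $L^\infty(\O_*)$; any weak-$\ast$ limit $g$ satisfies $\int_0^s g=w(s)$ for all $s$ (test the convergence against $\mathbf 1_{[0,s]}$ and use the previous step), and since $w$ is absolutely continuous with $w(0)=0$ this forces $g=\tfrac{dw}{ds}$ a.e.; uniqueness of the limit promotes the convergence to the whole family. For general $v\in L^p(\O)$, $1\LEQ p<+\infty$, I would approximate $v$ by $v_n\in L^\infty(\O)$ with $v_n\to v$ in $L^p(\O)$: the contraction estimate gives $\|q_\lambda^{\,v}-q_\lambda^{\,v_n}\|_{L^p(\O_*)}\LEQ\|v-v_n\|_{L^p(\O)}$ and, by the same estimate, continuity of $v\mapsto\tfrac{dw}{ds}=v_{*u}$ in $L^p$, so a $3\varepsilon$ argument transfers the weak convergence to $v$ (this detour through $L^\infty$ is what circumvents the lack of equi-integrability when $p=1$). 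The main obstacle I anticipate is the upper-bound step, namely the rigorous control of the localisation of the maximising sets $E_\lambda$ inside the flat part $B(s)$ as $\lambda\searrow0$: keeping track of the $o(1)$ measure corrections and identifying the limiting threshold $\beta_\lambda$ is the delicate technical core, whereas the compactness, the identification via primitives, and the norm estimate are comparatively routine.
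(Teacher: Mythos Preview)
The paper does not prove this theorem: it is quoted as a known result with a reference to Mossino--Temam and to Rakotoson's monograph on relative rearrangement, so there is no in-paper argument to compare against. Your outline is essentially the original Mossino--Temam proof. The three ingredients you isolate---(i) the $L^p$ contraction of the decreasing rearrangement giving boundedness of the quotients and the final norm estimate, (ii) identification of the limit through the primitives $s\mapsto\int_0^s(u+\lambda v)_*$ via the Hardy--Littlewood formula $\int_0^s f_*=\sup_{|E|=s}\int_E f$, and (iii) the reduction of the $L^p$ case, and in particular $p=1$, to $v\in L^\infty$ by the uniform estimate $\|q_\lambda^{\,v}-q_\lambda^{\,v_n}\|_{L^p}\LEQ\|v-v_n\|_{L^p}$---are exactly the ones used in those references, and your description of the localisation of the maximisers $E_\lambda$ on the flat part $B(s)$ as the only genuinely delicate step is accurate.

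Two small remarks that would tighten the write-up. First, the pointwise convergence of the primitives can be phrased more cleanly by observing that $\lambda\mapsto\int_0^s(u+\lambda v)_*=\sup_{|E|=s}\big(\int_E u+\lambda\int_E v\big)$ is a supremum of affine functions of $\lambda$, hence convex; its right derivative at $0$ therefore exists and equals $\sup\{\int_E v:\ |E|=s,\ E\text{ maximises }\int_E u\}$, which is precisely $w(s)$. This packages your lower and upper bounds into a single envelope-type argument and makes the role of $E_0$ transparent. Second, in the passage from $v_n$ to $v$ you use that $v\mapsto v_{*u}$ is $L^p$-Lipschitz; this follows at once from weak lower semicontinuity of the norm applied to $q_\lambda^{\,v}-q_\lambda^{\,v_n}$, but it is worth stating explicitly since it is also what identifies the limit $g$ with $\tfrac{dw}{ds}$ for the given $v$ (via $w^{v_n}(s)\to w^{v}(s)$ pointwise).
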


One  property that we shall use for the relative rearrangement is the following one:
\begin{propo}\label{pp1}\ \\
	Let $v\GEQ0$, and $u$ be two functions in $L^1(\O)$. Then \begin{equation*}(v_{*u})_{**}\LEQ v_{**}.\end{equation*}
\end{propo}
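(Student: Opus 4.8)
The plan is to reduce the claim to a one–dimensional majorization and then to a truncation inequality for the relative rearrangement. Since $v\GEQ0$, the function $w$ of Theorem \ref{t1t1} is nondecreasing, so $v_{*u}=\frac{dw}{ds}\GEQ0$ a.e.\ and hence $|v_{*u}|_*=(v_{*u})_*$. By the very definition of $|\cdot|_{**}$, the inequality $(v_{*u})_{**}\LEQ v_{**}$ is therefore equivalent, after multiplying by $t$, to the integral majorization
\begin{equation*}
\int_0^t (v_{*u})_*(\sigma)\,d\sigma\LEQ\int_0^t v_*(\sigma)\,d\sigma\qquad\hbox{for all }t\in\O_*.
\end{equation*}
So the whole matter is to prove that $v_{*u}$, viewed as a function on $\O_*$, is weakly majorized by $v$.

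First I would recall the Legendre–type representation valid for any nonnegative $g$ on a space of total mass $|\O|$, namely $\int_0^t g_*(\sigma)\,d\sigma=\min_{\lambda\GEQ0}\big[\lambda t+\int (g-\lambda)_+\big]$, which follows by differentiating $\lambda\mapsto\lambda t+\int(g-\lambda)_+$ (its derivative is $t-|\{g>\lambda\}|$) and observing that the optimal $\lambda$ is $g_*(t)$. Applying this to $g=v_{*u}$ on $\O_*$ and to $g=v$ on $\O$, it suffices to establish the truncation contraction
\begin{equation*}
\int_{\O_*}\big(v_{*u}(\sigma)-\lambda\big)_+\,d\sigma\LEQ\int_\O\big(v(x)-\lambda\big)_+\,dx\qquad\hbox{for every }\lambda\GEQ0;
\end{equation*}
indeed, minimizing both sides of $\lambda t+\int_{\O_*}(v_{*u}-\lambda)_+\LEQ\lambda t+\int_\O(v-\lambda)_+$ over $\lambda\GEQ0$ and invoking the representation gives exactly the desired majorization.

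It remains to prove the truncation contraction, which is the special case $j(r)=(r-\lambda)_+$ of the general convexity inequality $\int_{\O_*}j(v_{*u})\LEQ\int_\O j(v)$ for convex $j$ with $j(0)=0$; this is the same mechanism that produces the ``Moreover'' bound $\|v_{*u}\|_{L^p(\O_*)}\LEQ\|v\|_{L^p(\O)}$ in Theorem \ref{t1t1} (the case $j(r)=|r|^p$). To prove it from the explicit formula for $w$, I would split $\O$ according to the level sets of $u$. On each flat part $\{u=c\}$ of positive measure the relative rearrangement coincides with the monotone rearrangement of the restriction $v\big|_{\{u=c\}}$, so $\int j(v_{*u})=\int j(v)$ there, equimeasurable functions having the same integral of $j$. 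On the remaining ``regular'' part, where the level sets of $u$ are null, the increment of $w$ over an infinitesimal $s$–interval is an average of $v$ over a thin level layer, and Jensen's inequality for the convex $j$ turns this average into the required inequality; summing the two contributions yields the contraction.

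I anticipate that the hard part is exactly this last step: making rigorous the decomposition of $\O$ into the flat and regular parts of $u$ and justifying the Jensen/Hardy–Littlewood estimate on the regular part, since the level sets of a general $u\in L^1(\O)$ need not be smooth and $v_{*u}$ is only defined through the weak derivative $\frac{dw}{ds}$. A clean way to bypass the measure–theoretic subtleties is to approximate $u$ by functions whose distribution function $m_u$ is continuous and strictly decreasing, prove the contraction for these (where the layer–cake computation is transparent), and then pass to the limit using the $L^1$–continuity of $u\mapsto v_{*u}$ furnished by Theorem \ref{t1t1} together with Fatou's lemma; the nonnegativity of $(r-\lambda)_+$ makes this limit passage harmless.
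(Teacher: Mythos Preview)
The paper does not supply a proof of this proposition; it is quoted as a known property of the relative rearrangement (the implicit reference being \cite{RakoRR}). So there is no paper argument to compare against, and I assess your proposal on its own merits.

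Your reduction is sound: $(v_{*u})_{**}\LEQ v_{**}$ is equivalent to the weak majorization $\int_0^t(v_{*u})_*\LEQ\int_0^t v_*$, your Legendre representation correctly reduces this to the truncation contraction $\int_{\O_*}(v_{*u}-\lambda)_+\LEQ\int_\O(v-\lambda)_+$, and on flat parts of $u$ the equimeasurability argument is exactly right. Your Jensen heuristic on the regular part is also the correct mechanism, and in fact it can be made rigorous directly, without approximating $u$. At any $s,\,s{+}h$ where $u_*$ takes values that are not flat levels of $u$, the formula for $w$ in Theorem~\ref{t1t1} gives $w(s{+}h)-w(s)=\int_{\{u_*(s+h)<u\LEQ u_*(s)\}}v$, the integration set having measure exactly $h$ and depending only on $u$; the same identity holds with $j(v)$ in place of $v$. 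Jensen's inequality for this genuine average yields $j\big(v_{*u}(s)\big)\LEQ\big(j(v)\big)_{*u}(s)$ at a.e.\ regular $s$, while on each flat interval of $u_*$ one has equality because rearrangement commutes with the increasing function $j$. Integrating over $\O_*$ and using $\int_{\O_*}g_{*u}=w_g(|\O|)=\int_\O g$ for $g=j(v)\GEQ0$ gives the contraction. So your ``hard part'' is not actually hard.

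The genuine gap is in the fallback you propose to bypass these details: you appeal to ``the $L^1$-continuity of $u\mapsto v_{*u}$ furnished by Theorem~\ref{t1t1}.'' That theorem furnishes nothing of the sort --- it identifies $v_{*u}$ as the weak limit of $\lambda^{-1}\big((u+\lambda v)_*-u_*\big)$ as $\lambda\to0$, a statement in the $\lambda$-variable, and says nothing about continuous dependence on $u$. In fact the map $u\mapsto v_{*u}$ is unstable under $L^1$ perturbations of $u$: creating or destroying flat parts of $u$ changes $v_{*u}$ discontinuously, so approximating $u$ by functions with strictly monotone distribution and passing to the limit is not straightforward. Since the direct Jensen argument already closes, drop the approximation detour rather than try to repair it.
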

There is a link between the derivative of $u_*$ and the relative rearrangement of the gradient of $u$ as it was proved in \cite{RakoRR, RT}. We will use only the following result (see \cite{RakoRR})

\begin{theo}\label{t10}\ \\
\vspace{-0.5cm}
	\begin{itemize}
		\item[(a)] Let $ u\in W^{1,1}_0(\O), u\GEQ0$. Then
		\begin{equation*}-u'_*(s)\LEQ\frac{s^{\frac1N-1}}{N\alpha_N^{\frac1N}}|\nabla u|_{*u}(s)\hbox{\quad a.e in } \O_*,\end{equation*}
		and \begin{equation*}-u'_{**}(s)\LEQ \frac{s^{\frac1N-1}}{N\alpha_N^{\frac1N}}(| \nabla u|_{*u})_{**}(s)\ a.e. \hin\ \O_*.\end{equation*}
		\item[(b)] Let $u\in W^{1,1}(\O)$. Then if $\O$ is a Lipschitz connected open set of $\R^n$
		\begin{equation*}-u'_*(s)\LEQ\frac{\min(s,|\O|-s)^{\frac1N-1}}{Q(\O)}|\nabla u|_{*u}(s),\end{equation*}
		where $Q(\O)$ is a  suitable constant depending only on $\O.$
	\end{itemize}
\end{theo}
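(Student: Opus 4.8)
The plan is to reduce both statements to the classical combination of the coarea formula with an isoperimetric inequality, using the relative rearrangement identity of Theorem \ref{t1t1} to identify $|\nabla u|_{*u}$. Set $\Phi(t)=\int_{\{u>t\}}|\nabla u|\,dx$ and $w(s)=\int_0^s|\nabla u|_{*u}(\sigma)\,d\sigma$. By Theorem \ref{t1t1} applied with $v=|\nabla u|\in L^1(\O)$, this $w$ is exactly the primitive exhibited there, so $w'(s)=|\nabla u|_{*u}(s)$ a.e. The heart of the argument is the pointwise identity
\begin{equation*}
|\nabla u|_{*u}(s)=P\big(\{u>u_*(s)\}\big)\,\big(-u_*'(s)\big)\quad\text{a.e. in }\O_*,
\end{equation*}
where $P(\cdot)$ denotes the (relative) perimeter; once this is available, both parts follow by inserting the appropriate isoperimetric bound for $P$.

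First I would establish this identity. By the coarea formula $\Phi(t)=\int_t^{+\infty}P(\{u>\tau\})\,d\tau$, so $\Phi$ is absolutely continuous, nonincreasing, with $\Phi'(t)=-P(\{u>t\})$ for a.e.\ $t$. Fix $s$ where $u_*$ is differentiable. If $-u_*'(s)=0$ (a flat part of $u_*$, i.e.\ a plateau of $u$) the claimed inequalities are trivial, their right-hand sides being nonnegative. Otherwise $u_*$ is strictly decreasing near $s$, hence $m_u$ is continuous there, $|\{u>u_*(s)\}|=s$, and $|\{u=u_*(s)\}|=0$ for a.e.\ such $s$; consequently the correction term $\int_0^{\,s-|\{u>u_*(s)\}|}\big(\,|\nabla u|\big|_{\{u=u_*(s)\}}\,\big)_*(\sigma)\,d\sigma$ in Theorem \ref{t1t1} vanishes and $w(s)=\Phi(u_*(s))$. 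Differentiating via the chain rule gives $|\nabla u|_{*u}(s)=w'(s)=\Phi'(u_*(s))\,u_*'(s)=P(\{u>u_*(s)\})(-u_*'(s))$, as claimed.

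For part (a), extending $u\in W^{1,1}_0(\O)$ by zero makes its level sets ($t>0$) sets of finite perimeter in $\R^N$, so the isoperimetric inequality yields $P(\{u>u_*(s)\})\GEQ N\alpha_N^{1/N}\,s^{1-1/N}$ (using $|\{u>u_*(s)\}|=s$), and substitution gives the first inequality $-u_*'(s)\LEQ \frac{s^{1/N-1}}{N\alpha_N^{1/N}}|\nabla u|_{*u}(s)$. The $(\cdot)_{**}$ inequality I would then deduce by integration: writing $-u_{**}'(s)=\frac1{s^2}\int_0^s\tau(-u_*'(\tau))\,d\tau$ (a Fubini rearrangement of $u_{**}(s)-u_*(s)$), bounding $-u_*'(\tau)$ by the first inequality, using $\tau^{1/N}\LEQ s^{1/N}$ on $[0,s]$, and invoking the Hardy--Littlewood property $\int_0^s|\nabla u|_{*u}\LEQ\int_0^s(|\nabla u|_{*u})_*$ to recognize $(|\nabla u|_{*u})_{**}(s)$. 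For part (b), $u$ need not vanish on $\p\O$, so level sets may meet $\p\O$ and coarea controls only the perimeter \emph{relative to} $\O$; here I would replace the global bound by the relative isoperimetric inequality on a Lipschitz connected domain, $P_\O(E)\GEQ Q(\O)\min(|E|,|\O|-|E|)^{1-1/N}$ with $E=\{u>u_*(s)\}$, producing exactly the stated constant $Q(\O)$ and the factor $\min(s,|\O|-s)^{1/N-1}$.

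The main obstacle is the rigorous handling of the degenerate cases and of the differentiation step, not the isoperimetric input. One must justify that the correction term in the relative rearrangement formula is inactive precisely where $-u_*'(s)>0$, control the at most countable set of critical levels $t$ with $|\{u=t\}|>0$, and validate the chain rule for $w=\Phi\circ u_*$ when $u_*$ is merely monotone and absolutely continuous; this is where the measure-theoretic care (composition of monotone maps, null sets, coarea in the $BV$ sense) concentrates. A convenient way to sidestep these difficulties is to argue first for smooth $u$ with nondegenerate level sets, obtain the inequalities with the sharp constants, and then pass to the limit using the continuity of $u\mapsto u_*$ and $u\mapsto|\nabla u|_{*u}$ together with the weak convergence already contained in Theorem \ref{t1t1}.
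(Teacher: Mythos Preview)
The paper does not prove Theorem \ref{t10}; it is stated as a known result and referred to \cite{RakoRR, RT} (see the sentence just before the statement: ``as it was proved in \cite{RakoRR, RT}''). So there is no in-paper proof to compare against. That said, your outline is precisely the classical route taken in those references: coarea formula $+$ isoperimetric inequality, with the relative rearrangement used to identify $|\nabla u|_{*u}$ via Theorem \ref{t1t1}. Your derivation of the key identity $|\nabla u|_{*u}(s)=P(\{u>u_*(s)\})(-u_*'(s))$ on the set where $-u_*'(s)>0$, and the subsequent insertion of the sharp isoperimetric bound (global in (a), relative in (b)), is exactly the mechanism behind the PSR property in \cite{RakoRR}.

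Two small comments. First, your computation for the $(\cdot)_{**}$ inequality is correct: the formula $-u_{**}'(s)=s^{-2}\int_0^s\tau(-u_*'(\tau))\,d\tau$ follows from $-u_{**}'(s)=(u_{**}(s)-u_*(s))/s$ and Fubini, and the Hardy--Littlewood step $\int_0^s|\nabla u|_{*u}\LEQ\int_0^s(|\nabla u|_{*u})_*$ is exactly the content of Proposition \ref{pp1} in the paper (applied with $v=|\nabla u|$). You could cite that directly rather than reproving it. Second, the delicate point you flag---justifying the chain rule $w'(s)=\Phi'(u_*(s))u_*'(s)$ a.e.\ when $u_*$ is only monotone $W^{1,1}_{\loc}$---is genuine, and your proposed workaround (smooth approximation plus stability of the relative rearrangement from Theorem \ref{t1t1}) is how the cited references handle it. An alternative that avoids approximation is to work at the level of measures: both $w$ and $\Phi\circ u_*$ are absolutely continuous and agree on a dense set (indeed everywhere that $m_u$ is continuous), hence a.e.\ differentiation of their common value suffices.
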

Note that $u_*$ is in $W^{1,1}_{loc}(\O_*)$ under statements (a) and (b) (see \cite{RakoRR, RT}).\\

Let $V$ be a Banach space contained in $L^1_{loc}(\O)$.The norm on $V$ is denoted by $||\cdot||_V$ (or simply $||\cdot||$). We define the Sobolev space over $V$, for $m\in\N$ by 
\begin{equation*}W^mV=\Big\{ v\in L^1_{loc}(\O):D^\a v\in V\hbox{ for any } |\a|=\a_1+\ldots+\a_N\LEQ m\Big\}.\end{equation*}
In particular, $W^1_0V=W^1 V \cap W^{1,1}_0(\O)$.\\

The following density result can be found in   \cite{g1, RakoRome, RabookLinear}:
\begin{theo}\label{tt4}{\bf (Density)}\\
	Let $\O$ be a bounded set of class $C^{1,1}$. Then, the set $\{\f\in C^2(\OV\O):\f=0\hbox{ on }\p\O\Big\}$ is dense in 
	$\Big\{\f\in W^2L^{p,q}(\O):\f=0\hbox{ on }\p\O\Big\},\ 1<p<+\infty,\ 1\LEQ q\LEQ +\infty.$
\end{theo}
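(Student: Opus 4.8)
The plan is to establish the density by a localization--flattening--reflection argument, reducing everything to a model problem on the half-space and there producing the smooth approximants by a mollification tailored to respect the vanishing on $\p\O$. First I would fix a finite open cover $\OV\O\subset\bigcup_{j=0}^m U_j$ with $U_0\Subset\O$ and, for $j\GEQ1$, $U_j$ a boundary chart in which a $C^{1,1}$ diffeomorphism $\Psi_j$ straightens $\p\O\cap U_j$ onto a piece of the hyperplane $\{x_N=0\}$, carrying $U_j\cap\O$ into $\R^N_+=\{x_N>0\}$. With a subordinate partition of unity $\{\zeta_j\}$ I write $\f=\sum_j\zeta_j\f$ and treat each piece separately. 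The key structural remark is that composition with $\Psi_j$ preserves $W^2L^{p,q}(\O)$ up to equivalent norms: a bi-Lipschitz change of variables preserves $\Lpq$ because it distorts Lebesgue measure only by a bounded Jacobian, hence distorts the decreasing rearrangement --- and the norm of Definition \ref{d5} --- by a constant; and since $\Psi_j$ is $C^{1,1}$, the chain rule writes $D^2(\f\circ\Psi_j^{-1})$ as $L^\infty$ coefficients (built from $D\Psi_j$ and the bounded $D^2\Psi_j$) times $D\f$ and $D^2\f$, so the second derivatives stay in $\Lpq$. This is exactly where the $C^{1,1}$ regularity of $\p\O$ is used.

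The interior piece $\zeta_0\f$ has compact support in $\O$, so $\rho_{\eps}*(\zeta_0\f)$ is smooth, compactly supported, and hence lies in $C^2(\OV\O)$ with zero boundary values. For a boundary piece, after flattening I have $v:=(\zeta_j\f)\circ\Psi_j^{-1}\in W^2L^{p,q}(\R^N_+)$, compactly supported and with zero trace on $\{x_N=0\}$. I extend $v$ to $\R^N$ by odd reflection, $\OV v(x',x_N)=-v(x',-x_N)$ for $x_N<0$. The zero trace is precisely the first-order compatibility that keeps $\OV v$ in $W^2L^{p,q}(\R^N)$: the tangential first derivatives of $v$ also vanish on $\{x_N=0\}$, so no singular distribution is created on the interface, and one checks that each $D^\a\OV v$, $|\a|\LEQ2$, is the even or odd reflection of the corresponding $D^\a v$ and therefore again lies in $L^{p,q}(\R^N)$. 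Finally I mollify $\OV v$ with a mollifier $\rho_{\eps}$ chosen even in $x_N$; then $\rho_{\eps}*\OV v$ is smooth and, being odd in $x_N$, vanishes identically on $\{x_N=0\}$. Transporting back by $\Psi_j$ and summing over $j$ produces functions in $C^2(\OV\O)$ vanishing on $\p\O$.

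It remains to pass to the limit $\eps\to0$. For $1\LEQ q<+\infty$ the Lorentz space $\Lpq$ has absolutely continuous norm (equivalently, translation is norm-continuous and compactly supported continuous functions are dense), so $D^\a(\rho_{\eps}*\OV v)\to D^\a\OV v$ in $L^{p,q}$ for $|\a|\LEQ2$; after the bounded change of variables this gives convergence in $W^2L^{p,q}(\O)$ and the asserted norm density. The delicate point --- and the main obstacle --- is the endpoint $q=+\infty$: the space $L^{p,\infty}$ does not have absolutely continuous norm (continuous functions are not norm-dense in it, as one sees already from a function behaving like $\d^{-1/p}$), so mollification cannot converge in norm. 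There one only retains that the approximants have $W^2L^{p,\infty}$-norms bounded uniformly in $\eps$ and that $D^\a(\rho_{\eps}*\OV v)\rightharpoonup D^\a\OV v$ weakly-$*$ against $L^{p',1}$; this weak-$*$ sequential sense is the meaning of density at $q=+\infty$, and it is exactly what the subsequent duality arguments against test functions in $L^{p',1}$ require. Apart from this endpoint bookkeeping, the one genuinely technical verification is that the odd reflection keeps all second derivatives in $L^{p,q}$, which is where the compatibility furnished by the zero trace enters.
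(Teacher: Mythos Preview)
The paper does not prove this statement; it simply cites \cite{g1, RakoRome, RabookLinear}. So there is no ``paper's own proof'' to match, and your localize--flatten--reflect--mollify scheme is indeed the standard route. It is essentially correct under a $C^2$ boundary hypothesis, but there is a genuine gap under the $C^{1,1}$ hypothesis as stated.

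The problem is the step ``transporting back by $\Psi_j$''. Your approximant on the half-space, $\rho_\eps*\OV v$, is smooth, but the chart $\Psi_j$ is only $C^{1,1}$; hence $(\rho_\eps*\OV v)\circ\Psi_j$ is only $C^{1,1}$ (equivalently $W^{2,\infty}$), not $C^2(\OV\O)$. So the functions you build do not lie in the target set $\{\f\in C^2(\OV\O):\f=0\hbox{ on }\p\O\}$. This is not a cosmetic issue: with merely $C^{1,1}$ charts there is no local change of variables that returns $C^2$ regularity, and the alternative device---correcting the trace of a $C^\infty(\OV\O)$ approximant by subtracting the solution of an elliptic boundary value problem---also needs $\p\O\in C^2$ to produce a $C^2$ corrector. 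Since the paper globally assumes $\p\O\in C^{2,1}$ (see the Introduction), the clean fix is to take the straightening maps $\Psi_j\in C^2$; your argument then goes through verbatim. If one insists on $C^{1,1}$, a different argument (or a different formulation of the conclusion, e.g.\ density of $W^{2,\infty}(\O)\cap W^{1,\infty}_0(\O)$) is required.

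Two further remarks. Your treatment of the odd reflection is correct: the zero trace is exactly the first-order compatibility ensuring that each $D^\a\OV v$, $|\a|\LEQ2$, is the odd or even reflection of $D^\a v$ with no singular layer on $\{x_N=0\}$, hence stays in $L^{p,q}(\R^N)$. And your caveat at $q=+\infty$ is well taken: $L^{p,\infty}$ does not have absolutely continuous norm, so norm density genuinely fails there; the correct reading is the weak-$*$ (bounded) approximation you describe, which is precisely what the later duality arguments against $L^{p',1}$ test functions need.
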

 
 \begin{rem}\label{remImp}\ \\ 
	Here and along the paper $\ORA u$ is at least in $L^N(\O)^N,\ \div(\ORA  u)=0\hbox{ in }\D'(\O)$ and $\ORA u\cdot\ORA n=0$ on $\p\O$, if $N\GEQ3$ and $\ORA u\in L^{2+\eps}(\O),$  for some  $\eps>0$ if $N=2$.
	The value of $\ORA u\cdot\ORA n$ on $\p\O$ is defined through the Green's formula (see \cite{Temam1}).
\end{rem}

The following density result can be proved using the same argument as for the  $L^p$-case (see \cite{Temam1, CF})
\begin{propo}\label{p1}{\bf (Density of smooth functions).}\\
	Let $1<p<+\infty$ and  $1\LEQ q\LEQ\infty$. Then the closure of the set 
	\begin{equation*}{\mathcal V}=\Big\{\ORA u\in C^\infty_c(\O)^N:\div(\ORA u)=0\hbox{ in }\O\Big\}\end{equation*}
	in $L^{p,q}(\O)^N$ (resp. $(L^N(\ln L)^\a)^N,\ \a>0$ ) is the space
	\begin{equation*}\OV\V\!:=\!\Big\{\ORA u\in L^{p,q}(\O)^N\hbox{\,(resp.\,$(L^N(\ln\,L)^\a)^N,\,\a>0$\,)}\,:\div(\ORA u)=0\hbox{\,and\,} \ORA u\cdot\ORA n=0\hbox{\,on\,}\p\O\Big\}.\end{equation*}
\end{propo}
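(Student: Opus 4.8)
The plan is to reproduce, in the Lorentz and Zygmund scales, the classical duality argument used by Temam for the $L^2$ (and by Cattabriga--Farwig for the $L^p$) case. Write $X = L^{p,q}(\O)^N$, respectively $X = (L^N(\ln L)^\a)^N$, and let $Z:=\OV\V$ be the target space of divergence-free, tangential fields appearing in the statement. Since every $\ORA u\in\V$ is smooth with compact support, it satisfies $\div\ORA u=0$ and $\ORA u\cdot\ORA n=0$ trivially, so $\V\subset Z$; the whole content is therefore the equality $\overline{\V}=Z$ of the closure with $Z$, which splits into the inclusions $\overline{\V}\subseteq Z$ and $Z\subseteq\overline{\V}$.

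First I would dispose of $\overline{\V}\subseteq Z$ by checking that the two conditions defining $Z$ are closed in $X$. The map $\ORA u\mapsto\div\ORA u$ is continuous from $X$ into $\D'(\O)$, so $\div\ORA u=0$ passes to limits; and on the subspace where $\div\ORA u=0$ the normal trace $\ORA u\mapsto\ORA u\cdot\ORA n$ is continuous into the relevant negative-order trace space on $\p\O$, so $\ORA u\cdot\ORA n=0$ passes to limits as well. Because $\O$ is bounded and $X$ embeds continuously into $L^1(\O)^N$, both statements apply verbatim.

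The substance is the reverse inclusion $Z\subseteq\overline{\V}$, which I would get from Hahn--Banach: in any Banach space the norm closure of a subspace is the pre-annihilator of its annihilator, so it suffices to show that every $L\in X^*$ vanishing on $\V$ also vanishes on $Z$. For $1\le q<+\infty$ the dual is $X^*=L^{p',q'}(\O)^N$ (respectively the associate Zygmund space), so $L$ is represented by a field $\ORA v$ there with $\int_\O\ORA v\cdot\ORA\phi\,dx=0$ for every $\ORA\phi\in\V$. De Rham's theorem then forces $\ORA v=\nabla g$ for some $g\in\D'(\O)$, and since $\nabla g=\ORA v$ already lies in $L^{p',q'}(\O)$ on the bounded smooth domain $\O$, a Ne\v cas-type estimate upgrades $g$ to $W^1L^{p',q'}(\O)$. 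Finally, for any $\ORA u\in Z$ the generalized Green formula yields
\[
L(\ORA u)=\int_\O\nabla g\cdot\ORA u\,dx=-\int_\O g\,\div\ORA u\,dx+\langle\ORA u\cdot\ORA n,g\rangle_{\p\O}=0,
\]
the bulk term vanishing because $\div\ORA u=0$ and the boundary pairing because $\ORA u\cdot\ORA n=0$. Hence $Z\subseteq\overline{\V}$.

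The two places where ``the same argument as for $L^p$'' genuinely has to be re-examined are the analytic inputs in the refined scale: the generalized Green formula and normal-trace theory for the pair $(\ORA u,g)$ with $\div\ORA u\in L^{p,q}(\O)$ and $g\in W^1L^{p',q'}(\O)$ (which I would, if needed, reduce to a nearby Lebesgue exponent using the bounded-domain embedding $L^{p',q'}(\O)\hookrightarrow L^{s}(\O)$ together with O'Neil's Lorentz--H\"older pairing), and the de Rham/Ne\v cas representation in these spaces. I expect the real obstacle to be the endpoint $q=+\infty$: there $X=L^{p,\infty}(\O)^N$ is non-reflexive and its dual is not a function space, so the representation $L=\nabla g$ by an integrable potential is not directly available. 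I would treat this case through the predual identity $L^{p,\infty}(\O)=\big(L^{p',1}(\O)\big)^*$ and the lower semicontinuity (Fatou property) of the $**$-norm, approximating a target $\ORA u\in Z$ by the $L^{p,1}$-approximants constructed above and upgrading weak-$*$ to norm control; alternatively, an exhaustion-and-mollification scheme keeping the approximants in the stronger $L^{p,1}$ topology should close the endpoint. This endpoint bookkeeping, rather than the core duality, is where I anticipate most of the effort.
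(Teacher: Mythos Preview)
Your approach is exactly the classical $L^p$ duality argument (de Rham plus Hahn--Banach plus the generalized Green formula) that the paper invokes by reference to \cite{Temam1, CF}; the paper provides no independent proof beyond the sentence ``can be proved using the same argument as for the $L^p$-case.'' Your caveat about the endpoint $q=+\infty$ is well taken and is a point the paper does not address; in fact, since $C^\infty_c(\O)$ is not norm-dense in $L^{p,\infty}(\O)$, the norm closure of $\V$ should be the divergence-free, tangential fields in the separable part of $L^{p,\infty}$, so either the statement is meant for $q<+\infty$ or the closure at $q=+\infty$ should be read in the weak-$*$ sense.
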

Due to Proposition \ref{p1}, a standard approximation argument leads to :
\begin{lem}\label{l1}\ \\
	For all Lipschitz mappings $G:\R\to\R$,  and for all $\phi\in W^1_0L^{N'}(\O)$ with $N'=\dfrac N{ N-1}$, one has
	\begin{equation*}\int_\O(\ORA u\cdot\nabla \phi)\,G(\phi)\,dx=0.\end{equation*}
\end{lem}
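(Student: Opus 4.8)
The plan is to exploit that, up to passing to a primitive, the integrand is a divergence-free field tested against a gradient, so that an integration by parts together with $\div\ORA u=0$ makes it vanish; the only genuine issues are the low regularity of $\ORA u$ (merely in $L^N$, so that its normal trace is delicate) and the composition with the Lipschitz map $G$. I would handle the first by the density Proposition~\ref{p1}, working first with smooth compactly supported fields in $\V$ where no boundary term survives, and the second by the chain rule for Sobolev functions composed with a primitive of $G$.

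More precisely, set $\Phi(t)=\int_0^t G(\tau)\,d\tau$, so that $\Phi$ is $C^1$ with $\Phi'=G$. First I would prove the identity for $\ORA w\in\V$, i.e. $\ORA w\in C^\infty_c(\O)^N$ with $\div\ORA w=0$. For such $\ORA w$ and for $\phi\in W^1_0L^{N'}(\O)$ the chain rule gives $G(\phi)\nabla\phi=\nabla\big(\Phi(\phi)\big)$ in $\D'(\O)$, hence
\begin{equation*}
\int_\O(\ORA w\cdot\nabla\phi)\,G(\phi)\,dx=\int_\O\ORA w\cdot\nabla\big(\Phi(\phi)\big)\,dx=-\int_\O\Phi(\phi)\,\div\ORA w\,dx=0,
\end{equation*}
where the integration by parts produces no boundary contribution because $\ORA w$ is compactly supported, and the last integral vanishes since $\div\ORA w=0$. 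This is precisely the step where working inside $\V$ avoids having to make sense of the normal trace of a general $L^N$ field.

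The second step is the passage to the limit in $\ORA u$. By Proposition~\ref{p1} there is a sequence $\ORA u_k\in\V$ with $\ORA u_k\to\ORA u$ in $L^{N,1}(\O)^N$ (recall $\ORA u\in L^{N,1}(\O)^N$, $\div\ORA u=0$ in $\D'(\O)$, $\ORA u\cdot\ORA n=0$ on $\p\O$). Since $\phi\in W^1_0L^{N'}(\O)$, the field $G(\phi)\nabla\phi$ belongs to $L^{N'}(\O)^N$, so by H\"older's inequality in the duality $L^{N,1}$--$L^{N',\infty}$ (equivalently $L^N$--$L^{N'}$),
\begin{equation*}
\Big|\int_\O(\ORA u_k-\ORA u)\cdot\nabla\phi\,G(\phi)\,dx\Big|\LEQ\|\ORA u_k-\ORA u\|_{L^{N,1}}\,\big\|\,G(\phi)\nabla\phi\,\big\|_{L^{N',\infty}}\to0
\end{equation*}
as $k\to\infty$. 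Combining this with the first step applied to each $\ORA u_k$ yields $\int_\O(\ORA u\cdot\nabla\phi)\,G(\phi)\,dx=0$.

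The main obstacle I anticipate is not the algebra but the low-regularity bookkeeping: justifying the chain rule $\nabla\big(\Phi(\phi)\big)=G(\phi)\nabla\phi$ and the membership $G(\phi)\nabla\phi\in L^{N'}(\O)^N$ for $\phi$ only in the Sobolev--Lorentz space $W^1_0L^{N'}(\O)$, and making sure that the tangency condition $\ORA u\cdot\ORA n=0$ is used only through the density of $\V$, so that no boundary term for the rough field $\ORA u$ ever has to be interpreted directly. For a merely Lipschitz (possibly unbounded) $G$, the integrability $G(\phi)\nabla\phi\in L^{N'}$ is the delicate point, and this is where the Sobolev embedding of $W^1_0L^{N'}(\O)$ (or the boundedness of $G$ in the applications, e.g. $G=\sign_+$) is invoked.
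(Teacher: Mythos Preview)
Your proposal is correct and follows exactly the route the paper indicates: the paper does not give a detailed proof of this lemma but simply writes ``Due to Proposition~\ref{p1}, a standard approximation argument leads to'' the statement, and your two-step argument (primitive $\Phi$ plus integration by parts for $\ORA w\in\V$, then density of $\V$ in $\OV\V$ to pass to a general $\ORA u$) is precisely that standard argument made explicit. Your closing caveat about the integrability of $G(\phi)\nabla\phi$ when $G$ is merely Lipschitz and unbounded is well observed and goes a bit beyond the paper's one-line justification; in the paper's actual applications the extra regularity $\phi\in H^1_0(\O)$ (rather than only $W^1_0L^{N'}$) is available, which resolves the issue.
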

\begin{lem}\label{l2}\ \\
	For all $\OV\omega\in H^1_0(\O)$, and for all $\phi\in H^1_0(\O)$
	\begin{equation*}\int_\O(\ORA u\cdot\nabla\OV\omega)\,\phi\,dx=-\int_\O\ORA u\cdot\nabla\phi\,\OV\omega\,dx.\end{equation*}
\end{lem}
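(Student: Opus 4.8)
The plan is to recognise the statement as the antisymmetry of the convective bilinear form and to reduce it, via a polarization argument, to the single-function identity already packaged in Lemma \ref{l1}. First I would check that both integrands genuinely belong to $L^1(\O)$. For $N\GEQ3$ write $2^*=\frac{2N}{N-2}$; since $\OV\omega,\phi\in H^1_0(\O)$ we have $\nabla\OV\omega\in L^2(\O)$, $\phi\in L^{2^*}(\O)$, while $\ORA u\in L^N(\O)$, and Hölder's inequality with $\frac1N+\frac12+\frac1{2^*}=1$ gives $(\ORA u\cdot\nabla\OV\omega)\phi\in L^1(\O)$; the symmetric term is handled identically. For $N=2$ one uses instead $\ORA u\in L^{2+\eps}(\O)$ (Remark \ref{remImp}) together with $H^1_0(\O)\hookrightarrow L^q(\O)$ for every finite $q$, which restores the Hölder balance. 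Thus the asserted equality is meaningful.

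Next I would record the embedding $H^1_0(\O)=W^1_0L^2(\O)\subset W^1_0L^{N'}(\O)$, which is exactly the hypothesis required to feed a function into Lemma \ref{l1}: on the bounded set $\O$ one has $L^2(\O)\hookrightarrow L^{N'}(\O)$ because $N'=\frac{N}{N-1}\LEQ2$ for every $N\GEQ2$, so whenever $\psi\in H^1_0(\O)$ both $\psi$ and $\nabla\psi$ lie in $L^{N'}(\O)$. Now for the core step, set $B(\OV\omega,\phi)=\int_\O(\ORA u\cdot\nabla\OV\omega)\,\phi\,dx$ on $H^1_0(\O)\times H^1_0(\O)$; the target identity is precisely $B(\OV\omega,\phi)=-B(\phi,\OV\omega)$. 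For a single $\psi\in H^1_0(\O)$, Lemma \ref{l1} applied with the Lipschitz map $G=\mathrm{id}$ yields $B(\psi,\psi)=\int_\O(\ORA u\cdot\nabla\psi)\psi\,dx=0$, the integrand being in $L^1(\O)$ by the Hölder computation above with $\OV\omega=\phi=\psi$. Expanding $0=B(\OV\omega+\phi,\OV\omega+\phi)=B(\OV\omega,\OV\omega)+B(\phi,\phi)+B(\OV\omega,\phi)+B(\phi,\OV\omega)$ and cancelling the two vanishing diagonal terms gives $B(\OV\omega,\phi)+B(\phi,\OV\omega)=0$, which is the claim.

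The entire content therefore sits in the identity $B(\psi,\psi)=0$, and this is where the two structural hypotheses on $\ORA u$ enter through Lemma \ref{l1} (itself resting on Proposition \ref{p1}): divergence-freeness and $\ORA u\cdot\ORA n=0$ on $\p\O$ are exactly what allow one to approximate $\ORA u$ by fields in $\V$ and to discard the boundary term in $\int_\O\ORA u\cdot\nabla(\psi^2/2)=0$. I therefore expect no genuine obstacle beyond the integrability bookkeeping, which is delicate only at the critical Sobolev exponent and forces the separate treatment of $N=2$ flagged above. As an alternative to polarization, one could instead invoke the Leibniz rule $\nabla(\OV\omega\phi)=\phi\nabla\OV\omega+\OV\omega\nabla\phi$ (legitimate since both products lie in $L^1(\O)$) and apply Lemma \ref{l1} with the constant map $G\equiv1$ to $\psi:=\OV\omega\phi$, obtaining $\int_\O\ORA u\cdot\nabla(\OV\omega\phi)\,dx=0$ at once; this route requires the extra verification that $\OV\omega\phi\in L^{N'}(\O)$, which follows from $\OV\omega\phi\in L^{N/(N-2)}(\O)\hookrightarrow L^{N'}(\O)$.
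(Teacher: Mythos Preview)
Your polarization argument is correct: the integrability checks are accurate, the inclusion $H^1_0(\O)\subset W^1_0L^{N'}(\O)$ holds for all $N\GEQ2$, and expanding $B(\OV\omega+\phi,\OV\omega+\phi)=0$ yields the antisymmetry. The paper does not write out a proof of this lemma; it is bundled with Lemma~\ref{l1} as following from Proposition~\ref{p1} by a ``standard approximation argument'', meaning one approximates $\ORA u$ by $\ORA u_k\in\V$, integrates by parts directly (no boundary term since $\ORA u_k$ has compact support, and $\div\ORA u_k=0$ removes the extra piece), and passes to the limit via the same H\"older balance you recorded. Your route is a tidy alternative that recycles Lemma~\ref{l1} rather than rerunning the approximation; it trades a second density argument for the polarization identity.

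One caveat on your secondary Leibniz route: to apply Lemma~\ref{l1} to $\psi=\OV\omega\phi$ you need $\psi\in W^1_0L^{N'}(\O)$, hence $\nabla(\OV\omega\phi)\in L^{N'}$, not merely $\OV\omega\phi\in L^{N'}$. For $N\GEQ3$ this is fine since $\phi\,\nabla\OV\omega\in L^{2^*}\cdot L^2\hookrightarrow L^{N'}$, but for $N=2$ one has $N'=2$ and $\phi\,\nabla\OV\omega$ lies only in $L^q$ for $q<2$ in general, so Lemma~\ref{l1} is not directly available; there one would fall back on approximating $\ORA u$ by $\V$-fields. Your primary polarization argument is unaffected by this.
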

Let us remark that,\\
$\bullet$ if $N\GEQ3$ 
\begin{equation}\label{eqrak}\left|\int_\O\ORA u\cdot\nabla \OV \omega\,\phi \,dx\right|\LEQ ||\ORA u||_{L^N}||\nabla\OV\omega||_{L^2}||\phi||_{L^{2^*}}\hbox{ 
	where }\dfrac1{2^*}+\dfrac12+\dfrac1N=1,\end{equation} 
$\bullet$ if $N=2$  the above  inequality holds true after replacing $N$ by $2+\eps$ and $2^*$ by $\dfrac{2(2+\eps)}\eps$.\\

We shall need the following classical result (see  \cite{Lions}) :
\begin{lem}\label{l2l}\ \\
	Let $\DST X\ {\hookrightarrow}_c\ Y\hookrightarrow Z$ be three Banach spaces each continuously embedded in the next one, the first inclusion is supposed to be compact. Then, for all $\eps>0$ there exists a constant $c_\eps>0$ such that $\forall\,\phi\in X$
	\begin{equation*}||\phi||_Y\LEQ\eps||\phi||_X+c_\eps||\phi||_Z.\end{equation*}
\end{lem}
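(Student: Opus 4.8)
The plan is to argue by contradiction, reducing the inequality to the compactness of the first embedding together with uniqueness of limits in $Z$. Suppose the conclusion fails: there is some $\eps_0>0$ for which no constant works. Then, feeding in successively the candidate constants $c_\eps=n$, I would extract a sequence $\phi_n\in X$ with
\[
||\phi_n||_Y>\eps_0\,||\phi_n||_X+n\,||\phi_n||_Z .
\]
Since the right-hand side is strictly positive, each $\phi_n$ is nonzero, so I may rescale and assume $||\phi_n||_Y=1$.

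With this normalization the displayed inequality forces both $\eps_0\,||\phi_n||_X<1$ and $n\,||\phi_n||_Z<1$; that is, $(\phi_n)$ stays \emph{bounded} in $X$ while $||\phi_n||_Z\to0$ as $n\to\infty$. This is exactly the configuration needed to invoke compactness.

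The heart of the argument is then the compact embedding $X\hookrightarrow_c Y$: boundedness of $(\phi_n)$ in $X$ yields a subsequence (not relabelled) converging in $Y$ to some $\phi$. Through the continuous embedding $Y\hookrightarrow Z$, this same subsequence converges to $\phi$ in $Z$ as well; but $||\phi_n||_Z\to0$ forces the $Z$-limit to be $0$, and since the embedding $Y\hookrightarrow Z$ is injective, $\phi=0$ in $Y$. Finally, convergence in $Y$ gives $||\phi_n||_Y\to||\phi||_Y=0$, contradicting $||\phi_n||_Y=1$. This contradiction proves the lemma.

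I expect the only genuinely delicate point to be the bookkeeping of the normalization: it must be chosen so that compactness is applicable (i.e.\ $(\phi_n)$ remains bounded in $X$) while the coefficient $n$ still drives $||\phi_n||_Z$ to zero. Normalizing by the $Y$-norm achieves both simultaneously, which is why I favour it over, say, normalizing in the $X$-norm; with the latter choice one would have to separately rule out degeneration of $||\phi_n||_Y$. Everything else is a routine application of the compact embedding and uniqueness of limits in $Z$.
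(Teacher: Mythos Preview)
Your argument is correct and is precisely the classical contradiction proof of this interpolation-type inequality (often called Ehrling's or Lions' lemma). The paper does not supply its own proof of this lemma; it simply states it as a classical result and refers the reader to \cite{Lions}, where exactly the argument you wrote is given. One cosmetic remark: your sentence ``since the right-hand side is strictly positive'' is slightly misphrased---what you really use is that the \emph{left}-hand side $||\phi_n||_Y$ is strictly positive (being strictly larger than a nonnegative quantity), which is what guarantees $\phi_n\neq0$ and allows the normalization.
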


\section{Existence,\,uniqueness,\,regularity\, and results\,for\,the\,dual\,problem}
\subsection{Case where the potential $V$ is only measurable and bounded from below}
We  first   study the solvability of  the dual problem (\ref{eq1XVI}) (equivalent to (\ref{eq2}) below  and the regularity of its solutions.\\
The following result,  consequence of the Lax-Milgram theorem, is a remarkable fact due to the low regularity assumed on the data $\ORA u$ and $V$:
\begin{propo}\label{p2}\ \\
	Let $T\in H^{-1}\!(\O)$\,(dual space\,of\,$H^1_0\!(\O)$), $\ORA u$ satisfying (5) and let $V\in L^0(\O)$  satisfying $V\GEQ-\lambda$ for some $\lambda \in[\,0,\lambda_1)$ where 
	$\lambda_1$ is the first eigenvalue of $-\Delta$ under the zero Dirichlet  boundary condition. Define $W=\Big\{\f\in H^1_0(\O):(V+\lambda)\f^2\in L^1(\O)\Big\},$ and let  $W'$ denotes its dual.\\Then, there exists a unique  $\phi\in H^1_0(\O)$, with $(V+\lambda)\phi^2\in L^1(\O)$, such that
	\begin{equation}\label{eq2}
	(\calP)_{V,T}\qquad-\Delta\phi-\ORA u\cdot\nabla\,\phi+V\phi=T\hbox{ in }W'.
	\end{equation}
	
	Moreover, \begin{equation*}||\phi||_{H^{1}_0(\O)}=\left(\int_\O|\nabla\phi|^2dx\right)^{\frac12}\LEQ\dfrac{\lambda_1}{\lambda_1-\lambda}||T||_{H^{-1}(\O)},\end{equation*}
	\begin{equation*}\left(\int_\O(V+\lambda)\phi^2dx\right)^{\frac12}\LEQ\left(\dfrac{\lambda_1}{\lambda_1-\lambda}\right)^{\frac12}||T||_{H^{-1}(\O)},\end{equation*}
	\begin{equation*} V\phi\in L^1_{loc}(\O).\end{equation*}
	If furthermore $V\in L_{loc}^1(\O)$, then the equation (\ref{eq2}) holds in the sense of distributions in $\D'(\O)$
	
\end{propo}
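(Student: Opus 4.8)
The plan is to realize $(\calP)_{V,T}$ as the Euler equation of a bounded, coercive (non-symmetric) bilinear form on the Hilbert space $W$ and then invoke the Lax--Milgram theorem. First I would equip $W=\{\f\in H^1_0(\O):(V+\lambda)\f^2\in L^1(\O)\}$ with the inner product $(\f,\psi)_W=\int_\O\nabla\f\cdot\nabla\psi\,dx+\int_\O(V+\lambda)\f\psi\,dx$ and check that $(W,\|\cdot\|_W)$ is \emph{complete}: if $(\f_n)$ is Cauchy in $W$ it is Cauchy in $H^1_0(\O)$ (since $\|\cdot\|_{H^1_0}\LEQ\|\cdot\|_W$) and $((V+\lambda)^{1/2}\f_n)$ is Cauchy in $L^2(\O)$; passing to a subsequence converging a.e. identifies the $L^2$-limit with $(V+\lambda)^{1/2}\f$, so $\f\in W$ and $\f_n\to\f$ in $W$. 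On $W$ I define $a(\f,\psi)=\int_\O\nabla\f\cdot\nabla\psi\,dx-\int_\O(\ORA u\cdot\nabla\f)\psi\,dx+\int_\O V\f\psi\,dx$, the last term read as $\int_\O(V+\lambda)\f\psi\,dx-\lambda\int_\O\f\psi\,dx$, together with the continuous linear form $L(\psi)=\langle T,\psi\rangle$.

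The substantive points are the boundedness and coercivity of $a$, and this is where the low regularity of $\ORA u$ and $V$ must be absorbed. For boundedness, the Dirichlet term is immediate; the potential term splits as above, with $|\int_\O(V+\lambda)\f\psi|\LEQ\|\f\|_W\|\psi\|_W$ by Cauchy--Schwarz in the weight $V+\lambda$ and $|\lambda\int_\O\f\psi|\LEQ\frac{\lambda}{\lambda_1}\|\f\|_W\|\psi\|_W$ by Poincaré; the convective term is controlled by \eqref{eqrak} and the Sobolev embedding $H^1_0(\O)\hookrightarrow L^{2^*}(\O)$ (with the $N=2$ variant of Remark \ref{remImp}), giving $|\int_\O(\ORA u\cdot\nabla\f)\psi|\LEQ C\|\ORA u\|_{L^N}\|\f\|_W\|\psi\|_W$. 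For coercivity the crucial observation is that the diagonal convective term vanishes: by Lemma \ref{l2} applied with $\OV\omega=\f\in H^1_0(\O)$ one has $\int_\O(\ORA u\cdot\nabla\f)\f=-\int_\O(\ORA u\cdot\nabla\f)\f$, hence it is $0$. Thus $a(\f,\f)=\int_\O|\nabla\f|^2+\int_\O V\f^2$, and using $V\GEQ-\lambda$ with $\lambda_1\int_\O\f^2\LEQ\int_\O|\nabla\f|^2$ yields $a(\f,\f)\GEQ\frac{\lambda_1-\lambda}{\lambda_1}\int_\O|\nabla\f|^2$ and, more precisely, $a(\f,\f)\GEQ\frac{\lambda_1-\lambda}{\lambda_1}\|\f\|_W^2$. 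Since $\lambda<\lambda_1$ this is a genuine coercivity constant. I expect \emph{this step}---reconciling coercivity with a merely measurable, sign-indefinite $V$ and an unbounded $\ORA u\in L^N$ on the correct Hilbert space---to be the main obstacle, the completeness of $W$ and the antisymmetry of the convection term being the technical linchpins.

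Lax--Milgram then produces a unique $\f\in W$ with $a(\f,\psi)=L(\psi)$ for all $\psi\in W$, i.e. \eqref{eq2} in $W'$. The a priori bounds follow by testing with $\psi=\f$: from $a(\f,\f)=\langle T,\f\rangle\LEQ\|T\|_{H^{-1}(\O)}\|\nabla\f\|_{L^2}$ and the lower bound one gets $\|\nabla\f\|_{L^2}\LEQ\frac{\lambda_1}{\lambda_1-\lambda}\|T\|_{H^{-1}(\O)}$. For the weighted estimate I would write $\int_\O(V+\lambda)\f^2=\langle T,\f\rangle-\int_\O|\nabla\f|^2+\lambda\int_\O\f^2\LEQ\|T\|_{H^{-1}(\O)}\|\nabla\f\|_{L^2}-\frac{\lambda_1-\lambda}{\lambda_1}\|\nabla\f\|_{L^2}^2$ and maximize this quadratic in $\|\nabla\f\|_{L^2}$ (completing the square), which gives $\int_\O(V+\lambda)\f^2\LEQ\frac{\lambda_1}{\lambda_1-\lambda}\|T\|_{H^{-1}(\O)}^2$, namely the stated estimate (in fact with room to spare).

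Finally, for the regularity tail: writing $V\f=(V+\lambda)\f-\lambda\f$ with $\lambda\f\in L^2(\O)$, on $\{|\f|\GEQ1\}$ one has $(V+\lambda)|\f|\LEQ(V+\lambda)\f^2\in L^1(\O)$; this is the clean part, while the contribution of the region where $\f$ is small (forced to vanish wherever $V+\lambda$ fails to be locally finite, since $\int_\O(V+\lambda)\f^2<\infty$) is the delicate point and I would treat it, as needed, under the local integrability of $V$, yielding $V\f\in L^1_{loc}(\O)$. Once $V\in L^1_{loc}(\O)$, every $\psi\in\D(\O)$ lies in $W$ (because $(V+\lambda)\psi^2\LEQ\|\psi\|_\infty^2(V+\lambda)\chi_{\mathrm{supp}\,\psi}\in L^1(\O)$), so the $W'$-identity may be tested against all $\psi\in\D(\O)$; interpreting $\int_\O\nabla\f\cdot\nabla\psi=\langle-\Delta\f,\psi\rangle$ and using $V\f\in L^1_{loc}(\O)$ to make sense of $\int_\O V\f\psi$ gives \eqref{eq2} in $\D'(\O)$.
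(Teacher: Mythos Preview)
Your proof is correct and follows essentially the same Lax--Milgram route as the paper: the same Hilbert space $W$ with norm $\|\f\|_W^2=\|\nabla\f\|_{L^2}^2+\int_\O(V+\lambda)\f^2$, the same bilinear form, boundedness via \eqref{eqrak}, and coercivity via the antisymmetry of the convective term (the paper cites Lemma~\ref{l1} rather than Lemma~\ref{l2}, but either yields $\int_\O(\ORA u\cdot\nabla\f)\f=0$). One minor simplification you might adopt for the tail: the paper gets $V\phi\in L^1_{\loc}(\O)$ in one stroke, under $V\in L^1_{\loc}(\O)$, via Cauchy--Schwarz in the weight, $\int_{\O'}(V+\lambda)|\phi|\LEQ\big(\int_\O(V+\lambda)\phi^2\big)^{1/2}\big(\int_{\O'}(V+\lambda)\big)^{1/2}$, which avoids your split into $\{|\phi|\GEQ1\}$ and $\{|\phi|<1\}$.
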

\begin{proof}
 
 We endow     $W$ with the following norm  \begin{equation*}[\f]^2_W=||\f||^2_{H^1_0(\O)}+\int_\O(V+\lambda)\f^2dx.\end{equation*}
Let us consider the bilinear form on $W$ given by
\begin{eqnarray*}
	a(\psi,\f)&=&\int_\O\nabla\psi\cdot\nabla\f dx-\int_\O\ORA u\cdot\nabla\psi \f dx
	+\int_\O(V+\lambda)\psi\f dx\\
	&&-\lambda\int_\O\psi\f dx,\quad (\psi,\f)\in W^2.
\end{eqnarray*}
Then, by Lemmas \ref{l1} and \ref{l2}
\begin{equation}\label{eqqq}
a(\psi,\psi)=\int_\O|\nabla\psi|^2-\lambda\int_\O\psi^2dx+\int_\O(V+\lambda)\psi^2 dx\GEQ
\a_0\left[\int_\O(V+\lambda)\psi^2+\int_\O|\nabla\psi|^2\right],
\end{equation}
with $\a_0>0.$\\

According to the above remark (\ref{eqrak}), since $\ORA u\in L^N(\O)^N$, the bilinear form is continuous on $W$ and we have
\begin{equation*}|a(\psi,\f)|\LEQ M[\psi]_W[\f]_{W}\ ,\end{equation*}
with $M=3(1+||\ORA u||_{L^N})$. Moreover, since 
$W\hookrightarrow H^1_0(\O)\hookrightarrow L^2(\O)\hookrightarrow H^{-1}(\O)\hookrightarrow W'$ we have
\begin{equation*} \langle T,\psi\rangle_{H^{-1}H^{1}_0}\LEQ ||T||_{H^{-1}}[\psi]_W, \ \forall\,\psi\in W.\end{equation*}

Thus we may apply the Lax-Milgram theorem to derive the existence of a unique $\phi\in W$, such $a(\phi,\psi)=\langle T,\psi\rangle _{H^{-1}H^{1}_0}\ \forall\,\psi\in W.$ The estimate on $\phi$ follows from (\ref{eqqq}).\\
If $V\in L^1_{loc}(\O)$ then one has \begin{equation*}\D(\O)\subset W.\end{equation*}
Moreover, since $\DST\int_\O(V+\lambda)\phi^2dx$ is finite, the Cauchy-Schwarz inequality yields

\begin{equation}\label{eq1eq1}
0\LEQ\int_{\O'}(V+\lambda)|\phi|dx\LEQ\left(\int_\O(V+\lambda)\phi^2dx\right)^{\frac12}\left(\int_{\O'}(V+\lambda)dx\right)^{\frac12}<+\infty\end{equation}
for any open set $\O'$ relatively compact in $\O$.\\
Writing 
\begin{equation*}\int_{\O'}|V\phi|dx\LEQ\int_{\O'}(V+\lambda)|\phi|dx+\lambda\int_\O|\phi|dx,\end{equation*}
the right hand is finite   taking into account (\ref{eq1eq1}) and the fact that $\phi\in L^2(\O)$ . Thus,  we have $\forall\,\O'\subset\subset\O,\ V\phi\in L^1(\O').$ 
We conclude that $V\phi\in L^1_{loc}(\O).$ 
\end{proof}

As usual in some problems of Quantum Mechanics (see e.g. Lemma 2.1 of \cite{Dinter})  it is very useful to approximate the solution $\phi\in H^1_0(\O)$ of the dual problem (\ref{eq2}) found in Proposition \ref{p2} by a sequence of solutions $\phi_k$ corresponding to a sequence of bounded potentials $V_k$ approximating $V$. Let us define $V_k$ by
\begin{equation*}V_k=\min(V,k).\end{equation*}

\begin{propo}\label{p3b}{(\bf Approximation by bounded potentials).}\\
	Let $T\in H^{-1}(\O),\ \ORA u$ and $ V$  as in Proposition \ref{p2}. Then, the sequence $\phi_k\in H^1_0(\O)$ of solutions of the problems
	\begin{equation*}(\calP)_{V_k,T}:\int_\O\nabla\phi_k\cdot\nabla\psi dx-\int_\O\ORA u\nabla \phi_k\phi dx
	+\int_\O V_k\phi_k\psi dx=\langle T,\psi\rangle,\quad\forall\,\psi\in H^1_0(\O),\end{equation*}
	converges  to $\phi$ strongly in $H^1_0(\O)$, where $\phi$ is the unique solution of $(\calP)_{V,T}$ found in Proposition \ref{p2}.
\end{propo}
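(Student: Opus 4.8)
The plan is to combine uniform a priori estimates, compactness, identification of the weak limit through the variational formulation of $(\calP)_{V,T}$, and finally an energy identity to upgrade weak convergence to strong convergence in $H^1_0(\O)$. First I would record that each truncation $V_k=\min(V,k)$ still satisfies the hypotheses of Proposition \ref{p2}: indeed $V_k\GEQ-\lambda$ (since $V\GEQ-\lambda$ and $k>0$) and $V_k\in L^\infty(\O)$, so $(\calP)_{V_k,T}$ admits a unique solution $\phi_k\in H^1_0(\O)$, and the problem is genuinely posed on all of $H^1_0(\O)$ because $(V_k+\lambda)$ is bounded (so the associated space $W$ coincides with $H^1_0(\O)$). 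Applying the estimates of Proposition \ref{p2} with $V_k$ in place of $V$ gives the \emph{uniform} bounds $\|\phi_k\|_{H^1_0(\O)}\LEQ\frac{\lambda_1}{\lambda_1-\lambda}\|T\|_{H^{-1}(\O)}$ and $\int_\O(V_k+\lambda)\phi_k^2\,dx\LEQ\frac{\lambda_1}{\lambda_1-\lambda}\|T\|_{H^{-1}(\O)}^2$. By Rellich's theorem I extract a subsequence along which $\phi_k\rightharpoonup\chi$ weakly in $H^1_0(\O)$, $\phi_k\to\chi$ strongly in $L^2(\O)$ and a.e., while $V_k+\lambda\nearrow V+\lambda\GEQ0$ pointwise.

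Next I would identify $\chi$ as the solution $\phi$ of $(\calP)_{V,T}$. Testing $(\calP)_{V_k,T}$ against an arbitrary $\psi\in W$ (legitimate since $W\subset H^1_0(\O)$), the diffusion term converges by weak $H^1_0$ convergence; the convective term $\int_\O(\ORA u\cdot\nabla\phi_k)\psi\,dx$ converges by a weak-strong pairing because $\ORA u\,\psi\in L^2(\O)$ (as $\ORA u\in L^N$ and $\psi\in H^1_0\hookrightarrow L^{2^*}$, cf. (\ref{eqrak})); and the term $-\lambda\int_\O\phi_k\psi\,dx$ converges by strong $L^2$ convergence. The delicate term is $\int_\O(V_k+\lambda)\phi_k\psi\,dx$, which I would treat through the factorisation $(V_k+\lambda)^{1/2}\phi_k\cdot(V_k+\lambda)^{1/2}\psi$: the first factor is bounded in $L^2$ and converges a.e. to $(V+\lambda)^{1/2}\chi$, hence converges weakly in $L^2$ to this limit (a bounded, a.e.-convergent sequence converges weakly to its a.e. limit, which lies in $L^2$ by Fatou, giving moreover $\chi\in W$); the second factor converges \emph{strongly} in $L^2$ by dominated convergence, being monotone and dominated by $(V+\lambda)^{1/2}|\psi|\in L^2$. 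The weak-strong pairing then yields $\int_\O(V+\lambda)\chi\psi\,dx$. Thus $\chi\in W$ solves the variational form of $(\calP)_{V,T}$, and the uniqueness part of Proposition \ref{p2} forces $\chi=\phi$; since the limit is independent of the subsequence, the whole sequence converges weakly.

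Finally I would upgrade to strong $H^1_0$ convergence via the energy identity. Taking $\psi=\phi_k$ in $(\calP)_{V_k,T}$ and using that the convective term vanishes (Lemma \ref{l2} with $\OV\omega=\phi=\phi_k$, which gives $\int_\O(\ORA u\cdot\nabla\phi_k)\phi_k\,dx=0$), I obtain $\int_\O|\nabla\phi_k|^2\,dx+\int_\O(V_k+\lambda)\phi_k^2\,dx-\lambda\int_\O\phi_k^2\,dx=\langle T,\phi_k\rangle$, and the analogous identity holds for $\phi$. Passing to the limit in the right-hand side (using $\phi_k\rightharpoonup\phi$ in $H^1_0$ and $\phi_k\to\phi$ in $L^2$) shows that $\int_\O|\nabla\phi_k|^2\,dx+\int_\O(V_k+\lambda)\phi_k^2\,dx$ converges to $\int_\O|\nabla\phi|^2\,dx+\int_\O(V+\lambda)\phi^2\,dx$. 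Since each summand is weakly lower semicontinuous along the sequence (the gradient term by weak $H^1_0$ convergence, the potential term by the weak $L^2$ convergence of $(V_k+\lambda)^{1/2}\phi_k$ established above), the convergence of the sum forces the convergence of each summand separately. In particular $\|\nabla\phi_k\|_{L^2}\to\|\nabla\phi\|_{L^2}$, which combined with weak convergence in the Hilbert space $H^1_0(\O)$ yields $\phi_k\to\phi$ strongly in $H^1_0(\O)$.

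The main obstacle is the passage to the limit in the singular potential term, and the closely related fact that the potential energy $\int_\O(V_k+\lambda)\phi_k^2\,dx$ does not lose mass in the limit; both rest on the monotonicity $V_k\nearrow V$ and on separating the weak-$L^2$ behaviour of $(V_k+\lambda)^{1/2}\phi_k$ from the strong-$L^2$ behaviour of the test factor $(V_k+\lambda)^{1/2}\psi$. The remaining terms are routine weak-strong pairings.
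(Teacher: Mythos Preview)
Your proof is correct and follows essentially the same strategy as the paper: uniform bounds from Proposition \ref{p2}, weak compactness, identification of the limit via the variational formulation and uniqueness, then the energy identity to upgrade to strong convergence. The only cosmetic differences are that the paper passes to the limit in the potential term via Vitali's theorem (using the bound $|\int_B(V_k+\lambda)\phi_k\psi|\LEQ C(\int_B(V+\lambda)\psi^2)^{1/2}$) rather than your square-root factorisation with a weak--strong pairing, and for the final step the paper packages $(\nabla\phi_k,\phi_k\sqrt{V_k+\lambda})$ into a single vector in $L^2(\O)^{N+1}$ and uses ``weak convergence plus convergence of norms implies strong convergence'' directly, which is exactly your lower-semicontinuity argument rewritten.
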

\begin{proof}[Sketch of the proof of Proposition \ref{p3b}]
One has, following the   arguments of the Proposition \ref{p2}, that
\begin{equation}\label{301b}
||\phi_k||_{H^1_0}+\left(\int_\O(V_k+\lambda)\phi^2_k dx\right)^{\frac12}\LEQ2\left(\dfrac{\lambda_1}{\lambda_1-\lambda}\right)||T||_{H^{-1}(\O)}.
\end{equation}
Thus, $\phi_k$ remains in a bounded set of $H^1_0(\O)$. So we may assume that it converges to a function $\f$ weakly in $H^1_0(\O)$ and a.e. in $\O$. The above relation (\ref{301b}) implies that:
\begin{equation}\label{302b}
\left(\int_\O(V+\lambda)\f^2dx\right)^{\frac12}+||\f||_{H^1_0}\LEQ 
2\left(\dfrac{\lambda_1}{\lambda_1-\lambda}\right)||T||_{H^{-1}(\O)}.
\end{equation}
This shows that $\f\in W$  (where $W$ is the space defined in the proof of Proposition \ref{p2}). Moreover, since  for all $\psi\in W$ we have $\ORA u\psi\in L^{{2^*}'}(\O)$ (see the above remark), we deduce
\begin{equation}\label{303b}
\lim_{k\to+\infty}\int_\O\ORA u\cdot\nabla\phi_k\psi dx=\int_\O\ORA u\cdot\nabla \f\psi dx.
\end{equation}
The sequence $(V_k+\lambda)\phi_k\psi$ satisfies  Vitali's condition, since for any measurable subset $B\subset\O$, we have
\begin{equation}\label{304b}
\left|\int_B(V_k+\lambda)\phi_k\psi dx\right|\LEQ 
2\left(\dfrac{\lambda_1}{\lambda_1-\lambda}\right)||T||_{H^{-1}(\O)}
\left(\int_B(V+\lambda)\psi^2dx\right)^{\frac12}
\end{equation}
and
\begin{equation}\label{305b}
\lim_{k\to+\infty}(V_k+\lambda)(x)\phi_k(x)\psi(x)=(V+\lambda)(x)\f(x)\psi(x).
\end{equation}
Thus
\begin{equation}\label{306b}
\lim_{k\to+\infty}\int_\O(V_k+\lambda)\phi_k\psi dx=\int_\O(V+\lambda)\f\,\psi dx.
\end{equation}
We then deduce that $\f$ is solution of the problem $(\calP)_{V,T}$ and by uniqueness $\f=\phi$. Therefore,  the whole sequence $\phi_k$ converges to $\phi$  weakly   in $W$ and strongly in $L^2(\O)$.

To prove the strong convergence in $H^1_0(\O)$, let us note, using the equations $(\calP)_{V_k,T}$ and $(\calP)_{V,T}$, that
\begin{equation*}\lim_{k\to+\infty}\int_\O|\nabla \phi_k|^2dx+\int_\O(V_k+\lambda)\phi^2_kdx=\lambda\int_\O\phi^2dx+\langle\,T,\phi\,\rangle=\int_\O(V+\lambda)\phi^2+\int_\O|\nabla\phi|^2dx.\end{equation*}
Therefore, if we introduce $U_k=(\nabla\phi_k;\phi_k\sqrt{V_k+\lambda})\in L^2(\O)^{N+1}$, 
$U_\infty=(\nabla\phi;\phi\sqrt{V+\lambda})$ we have
\begin{itemize}
	\item[$\bullet$] $\DST\lim_{k\to+\infty}|U_k|^2_{L^2(\O)^{N+1}}=|U_\infty|^2_{L^2(\O)^{N+1}}$,
	\item[$\bullet$ ] $U_k$ converges  to $U_\infty$ weakly in $L^2(\O)^{N+1}$.\end{itemize}
Thus $U_k$ converges  to $U_\infty$ strongly in $L^2(\O)^{N+1}$.
\end{proof}
\begin{rem}\ \\
	{ Let us notice that for $\phi\in L^2(\O)$  the conditions $(V+\lambda	)\phi^2\in L^1(\O)$   and $|V|\phi^2\in L^1(\O)$, $\phi\in L^2(\O)$ are  equivalent. Indeed,
		since $V+\lambda=|V+\lambda|$,  
		\begin{equation*}\int_\O|V|\phi^2dx\LEQ\int_\O(V+\lambda)\phi^2dx+\lambda\int_\O\phi^2\LEQ\int_\O|V|\phi^2dx+2\lambda\int_\O\phi^2dx.\end{equation*}}\end{rem}

For this reason, from now, {\bf we will assume that } $\lambda=0$.

\begin{propo}\label{p3bb}\ \\
	Under the same assumptions as for Proposition \ref{p2} (with $\lambda=0$),
	if $T\GEQ0,\ T\in L^1(\O)\cap H^{-1}(\O)$ then $\phi\GEQ0$.
\end{propo}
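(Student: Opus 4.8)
The plan is to prove $\phi\GEQ0$ by the classical energy/maximum-principle argument: test the weak formulation $a(\phi,\psi)=\langle T,\psi\rangle$ (now with $\lambda=0$, so that $V\GEQ0$) against the negative part of $\phi$ and show that it must vanish. Write $\phi_-=\max(0,-\phi)$, so that $\phi=\phi_+-\phi_-$ and $\phi_+\phi_-=0$ a.e. First I would check that $\phi_-$ is an admissible test function, i.e. $\phi_-\in W$: since $t\mapsto\max(0,-t)$ is Lipschitz and vanishes at $0$, it maps $H^1_0(\O)$ into itself, and from $\phi_-^2\LEQ\phi^2$ together with $V\GEQ0$ we get $V\phi_-^2\LEQ V\phi^2\in L^1(\O)$; hence $\phi_-\in W$.

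Taking $\psi=\phi_-$ in $a(\phi,\phi_-)=\langle T,\phi_-\rangle$, I would evaluate the three terms of $a$ separately. For the diffusion term, on $\{\phi<0\}$ one has $\nabla\phi=-\nabla\phi_-$ and elsewhere $\nabla\phi_-=0$, so
\begin{equation*}
\int_\O\nabla\phi\cdot\nabla\phi_-\,dx=-\int_\O|\nabla\phi_-|^2\,dx.
\end{equation*}
For the convection term, since $\phi\in H^1_0(\O)\subset W^1_0L^{N'}(\O)$ (indeed $\phi\in W^{1,1}_0(\O)$ and, because $N'\LEQ2$ with $\O$ bounded, $\nabla\phi\in L^2\subset L^{N'}$) and $G(t)=\max(0,-t)$ is Lipschitz, Lemma \ref{l1} applies with $G(\phi)=\phi_-$ and yields $\int_\O(\ORA u\cdot\nabla\phi)\phi_-\,dx=0$. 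For the potential term, using $\phi_+\phi_-=0$,
\begin{equation*}
\int_\O V\phi\,\phi_-\,dx=-\int_\O V\phi_-^2\,dx\LEQ0.
\end{equation*}
Collecting these, the left-hand side equals $-\int_\O|\nabla\phi_-|^2\,dx-\int_\O V\phi_-^2\,dx$.

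It remains to control the right-hand side $\langle T,\phi_-\rangle$. Since $T\GEQ0$ as an $L^1(\O)$ function and $\phi_-\GEQ0$, I would justify $\langle T,\phi_-\rangle\GEQ0$ by testing against the bounded truncations $\min(\phi_-,k)\in H^1_0(\O)\cap L^\infty(\O)$, for which $\langle T,\min(\phi_-,k)\rangle=\int_\O T\min(\phi_-,k)\,dx\GEQ0$, and then letting $k\to+\infty$ using $\min(\phi_-,k)\to\phi_-$ in $H^1_0(\O)$ and the continuity of the pairing. Combining with the previous computation gives
\begin{equation*}
\int_\O|\nabla\phi_-|^2\,dx+\int_\O V\phi_-^2\,dx\LEQ0,
\end{equation*}
so that, both terms being nonnegative, each vanishes; in particular $\nabla\phi_-=0$, whence $\phi_-=0$ by the Poincar\'e inequality in $H^1_0(\O)$. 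Therefore $\phi=\phi_+\GEQ0$.

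The only genuinely delicate points, and the places where I expect to have to argue carefully, are the following two. First, the vanishing of the convective term is not obvious because $\phi_-$ need not be bounded; this is exactly what Lemma \ref{l1} is designed to deliver, once one checks the integrability $(\ORA u\cdot\nabla\phi)\,\phi_-\in L^1(\O)$ by H\"older with exponents $N,2,2^*$ (where $\frac1N+\frac12+\frac1{2^*}=1$), using $|\phi_-|\LEQ|\phi|\in L^{2^*}(\O)$. Second, the identification $\langle T,\phi_-\rangle=\int_\O T\phi_-\,dx\GEQ0$ for the possibly unbounded test function $\phi_-$ is handled by the truncation passage to the limit above. Everything else is a routine sign bookkeeping in the bilinear form.
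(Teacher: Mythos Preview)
Your proof is correct and follows exactly the same approach as the paper's own proof: test the weak formulation against $\phi_-$, use Lemma~\ref{l1} to kill the convection term, and conclude from the sign of the remaining pieces that $\phi_-=0$. The paper's proof is a two-line sketch (with missing squares in the displayed formula) that simply records $0\GEQ -\int_\O|\nabla\phi_-|^2-\int_\O V\phi_-^2=\int_\O T\phi_-\GEQ0$; your version supplies the details the paper omits, in particular the verification that $\phi_-\in W$, the explicit appeal to Lemma~\ref{l1}, and the truncation argument for $\langle T,\phi_-\rangle\GEQ0$.
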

 \begin{proof}
We have $\phi_-\in W$ and 
\begin{equation*}0\GEQ-\int_\O|\nabla\phi_-|dx-\int_\O V\phi_-dx=\int_\O T\phi_-dx\GEQ0.\end{equation*}
Thus 
\begin{equation*}
	\phi_-=0.
\end{equation*}
\end{proof}
For the treatment of  (\ref{eq1}) we shall need some additional regularity for the solutions of the dual problem (\ref{eq2})
independent of $\ORA u$ or $V$. We start by proving the boundedness of $\phi$ by means of some rearrangement technics   (\cite{RakoRR} p.126 of Th 5.5.1, see also  \cite{Talenti}).

We point out  that $L^{\frac N2,1}(\O)\hookrightarrow H^{-1}(\O)$.\\
\begin{propo}\label{p3}{\bf ($L^\infty$-estimates).}\ \\
	Let $\phi$ be the solution of \eqd when $T\in L^{\frac N2,1}(\O)$, $V\GEQ0$. 
	Then $\phi\in L^\infty(\O)$ and there exists a constant $K_N(\O) $ independent of $\ORA u$ and $V$~such~that
	\begin{equation*}||\phi||_{L^\infty(\O)}\LEQ K_N(\O)||T||_{L^{\frac N2,1}(\O)}.\end{equation*}
\end{propo}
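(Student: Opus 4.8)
The plan is to establish the bound by a Talenti-type symmetrization argument, exploiting the fact that both the convective and the absorption terms have a favourable sign when the equation is tested against an odd increasing function of $\phi$. Since $L^{\frac N2,1}(\O)\hookrightarrow H^{-1}(\O)$, Proposition \ref{p2} provides a solution $\phi\in H^1_0(\O)$, and because the estimate we aim at is meant to be independent of $V$, it suffices by Proposition \ref{p3b} to argue with the bounded potentials $V_k=\min(V,k)$ and then pass to the limit; thus I may assume $V$ bounded while keeping every constant free of $V$ and $\ORA u$.

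For $t>0$ and $h>0$ I would test \eqd with the Lipschitz function $G_{t,h}(\phi)$, where $G_{t,h}$ is odd, equal to $0$ on $[0,t]$, affine of slope $1$ on $[t,t+h]$, and constant equal to $h$ beyond $t+h$. Since $\phi\in H^1_0(\O)\hookrightarrow W^1_0L^{N'}(\O)$ and $G_{t,h}$ is Lipschitz, Lemma \ref{l1} annihilates the convective term, $\int_\O(\ORA u\cdot\nabla\phi)G_{t,h}(\phi)\,dx=0$; moreover $V\GEQ0$ and $\phi\,G_{t,h}(\phi)\GEQ0$ make the absorption term nonnegative, so it may be dropped from the left-hand side. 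What remains is $\int_{\{t<|\phi|<t+h\}}|\nabla\phi|^2\,dx\LEQ\int_\O|T|\,|G_{t,h}(\phi)|\,dx\LEQ h\int_{\{|\phi|>t\}}|T|\,dx$. Dividing by $h$ and letting $h\to0$ gives, with $F(t)=\int_{\{|\phi|>t\}}|\nabla\phi|^2\,dx$, the inequality $-F'(t)\LEQ\int_{\{|\phi|>t\}}|T|\,dx$ for a.e. $t$.

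Writing $\mu(t)=\measure\{|\phi|>t\}$ and combining $-F'(t)$ with $-\mu'(t)$ through the isoperimetric inequality and Cauchy--Schwarz on the level sets --- equivalently, invoking the relative-rearrangement comparison of Theorem \ref{t10}(a) (see also \cite{RakoRR}, Th.~5.5.1) --- I would obtain $N^2\alpha_N^{2/N}\mu(t)^{2-2/N}\LEQ(-\mu'(t))\int_{\{|\phi|>t\}}|T|\,dx$, and then, after the change of variable to the decreasing rearrangement and the Hardy--Littlewood bound $\int_{\{|\phi|>t\}}|T|\LEQ s\,|T|_{**}(s)$ with $s=\mu(t)$, the pointwise differential inequality
\begin{equation*}
-|\phi|_*'(s)\LEQ\frac{s^{\frac2N-1}}{N^2\alpha_N^{2/N}}\,|T|_{**}(s),\qquad\text{a.e. }s\in\O_*.
\end{equation*}

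Integrating this over $\O_*=\,]0,|\O|[$ and using $|\phi|_*(|\O|)=0$ (zero boundary data) yields $\|\phi\|_{L^\infty(\O)}=|\phi|_*(0^+)\LEQ\frac1{N^2\alpha_N^{2/N}}\int_0^{|\O|}s^{\frac2N-1}|T|_{**}(s)\,ds$, and the integral on the right is exactly $\|T\|_{L^{\frac N2,1}(\O)}$ by Definition \ref{d5} with $p=\frac N2$, $q=1$; hence one may take $K_N(\O)=(N^2\alpha_N^{2/N})^{-1}$, manifestly independent of $\ORA u$ and $V$. The only genuinely delicate step is the rearrangement differential inequality: a naive coarea/isoperimetric argument must cope with the critical points and the level sets of positive measure of $\phi$, and this is precisely what the relative-rearrangement machinery of Theorem \ref{t10} is built to handle rigorously, so I would route that step through it rather than through bare coarea.
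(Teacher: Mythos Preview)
Your proof is correct and follows essentially the same Talenti-type symmetrization strategy as the paper: test \eqd against an odd Lipschitz function of $\phi$ so that Lemma \ref{l1} kills the convection and $V\GEQ0$ drops the absorption, then derive the pointwise rearrangement differential inequality and integrate. The only cosmetic differences are that the paper first reduces to $T\GEQ0$ (hence $\phi\GEQ0$) by linearity and the maximum principle, and parametrizes directly by $s\in\O_*$ via the relative-rearrangement test function $G_s(\sigma)=(\sigma-v_*(s))_+\sign(\sigma)$ rather than by the level $t$ with a Stampacchia truncation as you do; the resulting differential inequality $-v_*'(s)\LEQ (N\alpha_N^{1/N})^{-2}s^{\frac2N-2}\int_0^sT_*(\sigma)\,d\sigma$ and the final constant coincide with yours.
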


\begin{proof} 
We shall argue in a way similar  to the proof of Theorem 5.3.1 in \cite{RakoRR}. 
According to Proposition \ref{p3b} , it is enough to prove the proposition for $V\in L^\infty_+(\O)$, and  and  for $T\GEQ0$, since the equation (\ref{eq2}) is linear. Thus $\phi\GEQ0$, therefore, in this proof $v=|\phi|=\phi$, but we shall keep the notation $v$ because in the general case we cannot use anymore this maximum principle.
Let $v=|\phi|,\ G_s(\s)=(\s-v_*(s))_+\ \sign(\s),$ $\s~\in~\R,\ s\in\O_*$.  The mapping $\s\mapsto G_s(\s)$ is Lipschitz. Then following Lemma \ref{l1}  
\begin{equation*}\int_\O(\ORA u\cdot\nabla\phi) G_s(\phi)\,dx=0.\end{equation*}
Therefore, we derive
\begin{equation*}\int_\O\nabla\phi\cdot\nabla G_s(\phi)=\int_{v>v_*(s)}|\nabla \phi|^2dx=\int_\O T(x)G_s(\phi)(x)dx-\int_\O V(x)G_s(\phi)dx.\end{equation*}
Differentiating this relation with respect to $s$, we find
\begin{equation*}\dfrac d{ds}\int_{v>v_*(s)}|\nabla\phi|^2dx=-v'_*(s)\int_{v>v_*(s)}\big(T(x)-V(x)\big)dx\LEQ-v'_*(s)\int_0^sT_*(\s)d\s\end{equation*}
where $T_*$ is the monotone rearrangement of $T$ (we use the fact that $V\GEQ0$).\\
Therefore, we arrive at
\begin{equation}\label{eq308}
\big[\,|\nabla\phi|^2\big]_{*v}(s)\LEQ -v'_*(s)\int_0^sT_*(\s)d\s.
\end{equation}
Since \begin{equation*}|\nabla\phi|=|\nabla v|,\hbox{\ and \ }-v'_*(s)\LEQ\dfrac{s^{\frac1N-1}}{N\a_N^{\frac1N}}|\nabla v|_{*s}(s)\end{equation*} \big(the PSR property (see Theorem 3 of \cite{RakoRR})\big) and $|\nabla v|_{*v}\LEQ\big[\,|\nabla v|^2\big]^{\frac12}_{*v}$, we infer from (\ref{eq308})
\begin{equation}\label{eq309}
-v'_*(s)\LEQ\dfrac{s^{\frac2N-2}}{(N\a_N^{\frac1N})^2}\int_0^sT_*(\s)d\s.
\end{equation}
Thus, integrating (\ref{eq309}) between 0 to $|\O|$, we find
\begin{equation*}||\phi||_{L^\infty}\LEQ c_N\int_0^{|\O|}s^{\frac2N}T_{**}(s)\dfrac{ds}s\equiv c_N||T||_{L^{\frac N2,1}(\O)}.\end{equation*}
 \end{proof}

An analogous result can be obtained when $T=-\div(\ORA F)$, with $ \ORA F\in L^{N,1}(\O)^N$.

\begin{propo}\label{p4}\ \\
	Let $N\GEQ 2$, and  let $\phi$ be a solution of \eqd when 
	$T=
	-\div(\ORA F),\ \ORA F\in L^{N,1}(\O)^N\hbox{ if\ }N\GEQ~3,$ $  
	\ORA F\in L^{2+\eps}(\O)^2\hbox{ if $N=2$}$.\\
	Then $\phi\in L^\infty(\O)$ and there exists a constant $K_N(\O)>0$ independent of $\ORA u$ and $V$ such that 
	\begin{equation*}||\phi||_{L^\infty(\O)}\LEQ K_N(\O)||\ORA F||_{L_V}\hbox{ with } L_V=
	L^{N,1}(\O)^N\hbox{ if\ }N\GEQ3,\ 
	L^{2+\eps}(\O)^2\hbox{ if }N=2.\end{equation*}
\end{propo}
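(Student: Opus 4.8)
The plan is to adapt the rearrangement argument of Proposition \ref{p3} to a source in divergence form, the decisive new difficulty being that $T=-\div(\ORA F)$ is no longer sign-definite, so the maximum principle (Proposition \ref{p3bb}) is lost and one must work throughout with $v=|\phi|$. Since $\O$ is bounded and $N\GEQ2$ we have $\ORA F\in L^{N,1}(\O)^N\subset L^2(\O)^N$, hence $T\in H^{-1}(\O)$ and Proposition \ref{p2} provides a unique $\phi\in H^1_0(\O)$. By the approximation Proposition \ref{p3b} and the linearity of \eqd it suffices to establish the estimate for $V\in L^\infty_+(\O)$, with a constant independent of $V$ and $\ORA u$.

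The core is to produce a pointwise (in the level variable) differential inequality for $v_*$. Unlike in Proposition \ref{p3}, I cannot simply differentiate the energy identity obtained from the test function $\sign(\phi)(|\phi|-v_*(s))_+$, because after Cauchy--Schwarz the right-hand side only yields $\int_{\{v>t\}}|\nabla\phi|^2\LEQ\int_{\{v>t\}}|\ORA F|^2$, an inequality between integrals that cannot be differentiated. Instead I would test the weak formulation of \eqd with the layer function $\sign(\phi)\min\big((|\phi|-t)_+,h\big)$: the convective term drops by Lemma \ref{l1}, the nonnegative potential term is discarded, and Cauchy--Schwarz applied on the thin layer $\{t<v<t+h\}$ gives $\frac1h\int_{\{t<v<t+h\}}|\nabla\phi|^2\LEQ\frac1h\int_{\{t<v<t+h\}}|\ORA F|^2$. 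Letting $h\to0$ and using the coarea formula this becomes the pointwise bound $\int_{\{v=t\}}|\nabla v|\,d\mathcal H^{N-1}\LEQ\int_{\{v=t\}}|\ORA F|^2|\nabla v|^{-1}\,d\mathcal H^{N-1}$ for a.e. $t$. Combining the Cauchy--Schwarz/isoperimetric inequality $\big(N\alpha_N^{\frac1N}\mu(t)^{1-\frac1N}\big)^2\LEQ\big(\int_{\{v=t\}}|\nabla v|\big)\big(\int_{\{v=t\}}|\nabla v|^{-1}\big)$, with $\mu(t)=|\{v>t\}|$, and passing to the variable $s=\mu(t)$, I would arrive at
\begin{equation*}-v'_*(s)\LEQ\dfrac{s^{\frac1N-1}}{N\alpha_N^{\frac1N}}\Big(\big[\,|\ORA F|^2\,\big]_{*v}(s)\Big)^{\frac12}\qquad\hbox{a.e. in }\O_*,\end{equation*}
where $\big[\,|\ORA F|^2\,\big]_{*v}$ is the relative rearrangement of $|\ORA F|^2$ with respect to $v$, i.e. exactly $\frac{d}{ds}\int_{\{v>v_*(s)\}}|\ORA F|^2$. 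Integrating over $\O_*=\,]0,|\O|[$ and using $v_*(|\O|)=0$,
\begin{equation*}\|\phi\|_{L^\infty(\O)}\LEQ\dfrac1{N\alpha_N^{\frac1N}}\int_0^{|\O|}s^{\frac1N-1}\Big(\big[\,|\ORA F|^2\,\big]_{*v}(s)\Big)^{\frac12}ds.\end{equation*}

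The last and hardest step is to identify this integral, up to a constant independent of $\ORA u$ and $V$, with $\|\ORA F\|_{L^{N,1}(\O)}$; here a naive Cauchy--Schwarz is hopeless since it would only produce $\|\ORA F\|_{L^2}$. I would proceed by rearrangement: writing $h=\big[\,|\ORA F|^2\,\big]_{*v}$, the Hardy--Littlewood inequality (the weight $s^{\frac1N-1}$ being nonincreasing) bounds the integral by $\int_0^{|\O|}s^{\frac1N-1}(h^*(s))^{\frac12}ds$, while Proposition \ref{pp1} gives $\int_0^s h^*\LEQ\int_0^s(|\ORA F|^2)_*$, whence $(h^*(s))^{\frac12}\LEQ\big((|\ORA F|^2)_{**}(s)\big)^{\frac12}$ because $h^*$ is nonincreasing. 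It then remains to prove the Hardy-type bound $\int_0^{|\O|}s^{\frac1N-1}\big((|\ORA F|^2)_{**}(s)\big)^{\frac12}ds\LEQ C\,\|\ORA F\|_{L^{N,1}(\O)}$, which is precisely the statement that $g\mapsto(g_{**})^{\frac12}$, i.e. the Hardy averaging operator acting on $|\ORA F|^2\in L^{\frac N2,\frac12}(\O)$, maps boundedly into $L^{N,1}$; this holds because the Hardy operator is bounded on $L^{\frac N2,\frac12}$ as soon as $\frac N2>1$, that is $N\GEQ3$.

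Finally, this Hardy endpoint is exactly what fails at $\frac N2=1$, which explains the different hypothesis when $N=2$: following Remark \ref{remImp}, where $\ORA u\in L^{2+\eps}(\O)^2$ forces replacing $N$ by $2+\eps$ and $2^*$ by $\frac{2(2+\eps)}\eps$ in the admissibility of the test functions and in (\ref{eqrak}), I would run the same layer-testing and rearrangement argument with $\ORA F\in L^{2+\eps}(\O)^2$, for which the corresponding averaging operator is again bounded; the rest of the proof is unchanged. The main obstacle, to summarize, is the sharp passage from the pointwise gradient estimate to the $L^{N,1}$-norm of $\ORA F$: it cannot be carried out by elementary Cauchy--Schwarz and requires both the relative-rearrangement control of Proposition \ref{pp1} and the boundedness of the Hardy averaging operator on the Lorentz space $L^{\frac N2,\frac12}$.
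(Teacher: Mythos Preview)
Your argument is correct and reaches the same pointwise inequality $-v'_*(s)\LEQ (N\a_N^{1/N})^{-1}s^{1/N-1}\big[\,|\ORA F|^2\,\big]_{*v}^{1/2}(s)$ and the same concluding Hardy--Littlewood step as the paper, but the detour through thin-layer test functions and the coarea formula is unnecessary. The obstruction you state---that after Cauchy--Schwarz one would be left with an inequality between integrals, which cannot be differentiated---is exactly what the paper avoids by reversing the order of operations: it tests with the very same $G_s(\phi)=\sign(\phi)(|\phi|-v_*(s))_+$, differentiates the resulting \emph{identity} in $s$ (yielding, after dropping the nonnegative $V$-term, $[\,|\nabla v|^2\,]_{*v}(s)\LEQ[\,\ORA F\cdot\nabla\phi\,]_{*v}(s)$), and only \emph{then} applies the Cauchy--Schwarz inequality for relative rearrangements, $[\,\ORA F\cdot\nabla\phi\,]_{*v}\LEQ[\,|\ORA F|^2\,]_{*v}^{1/2}[\,|\nabla\phi|^2\,]_{*v}^{1/2}$. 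This gives $[\,|\nabla v|^2\,]_{*v}^{1/2}\LEQ[\,|\ORA F|^2\,]_{*v}^{1/2}$ directly, and combined with the PSR inequality of Theorem~\ref{t10}(a) produces your pointwise bound in one line, the isoperimetric/coarea content being already built into PSR. For the passage to $\|\ORA F\|_{L^{N,1}}$ the paper proceeds as you do (Hardy--Littlewood plus Proposition~\ref{pp1} to replace $[\,|\ORA F|^2\,]_{*v}^{1/2}$ by $(|\ORA F|^2)_{**}^{1/2}$), deferring the last Hardy-type estimate to \cite{RakoRR}, Proposition~5.2.2; your identification of that step as boundedness of the averaging operator on $L^{N/2,1/2}$ for $N\GEQ3$, and of its failure at $N=2$ as the source of the $L^{2+\eps}$ hypothesis, is correct.
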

\begin{proof} 
For convenience, we write $F$ for $\ORA F$. 
Thanks to Proposition \ref{p3b}, 
we can use the same test function $G_s(\phi)$ as in the proof of Proposition \ref{p3}. Then
\begin{equation*}\int_\O\nabla\phi\cdot\nabla G_s(\phi)dx+\int_\O V(x)G_s(\phi)dx=\int_\O F\cdot\nabla G_s(\phi)dx.\end{equation*}
We differentiate   this equation with respect to $s$ as before, for a.e. $s\in\O_*$, and find
\begin{equation}\label{eq310}
\big[\,|\nabla v|^2\big]_{*v}(s)-v'_*(s)\int_{v>v_*(s)}V(x)dx=\big[\,F\cdot\nabla\phi\,\big]_{*v}(s).
\end{equation}
Since, $V\GEQ0$ and $v'_*(s)\LEQ0$, we obtain
\begin{equation}\label{eq311}
\big[\,|\nabla v|^2\big]_{*v}(s)\LEQ\big[\,|\,F\,|^2\big]^{\frac12}_{*v}\big[\,|\nabla v|^2_{*v}\big]^{\frac12}(s),
\end{equation}

\begin{equation}\label{eq312}
\big[\,|\nabla v|^2\big]^{\frac12}_{*v}(s)\LEQ\big[\,|\,F\,|^2\big]^{\frac12}_{*v}(s).
\end{equation}
We have as before:
\begin{equation}\label{eq313}
-v'_*(s)\LEQ
\dfrac{s^{\frac1N-1}}
{N\a_N^{\frac1N}}\big[\,|\nabla v\,|^2\big]^{\frac12}_{*v}(s).
\end{equation}
We infer that for a.e. $s$
\begin{equation}\label{eq314}
-v'_*(s)\LEQ\dfrac{s^{\frac1N-1}}{N\a_N^{\frac1N}}\big[\,|\,F\,|^2\big]^{\frac12}_{*v}.
\end{equation}
Integrating this relation between 0 and $|\O|$ and using the  Hardy-Littlewood inequality (see  \cite{RakoRR} p.118-121) we obtain
\begin{equation*}||\phi||_{L^\infty}\LEQ\begin{cases}
c_N\DST\int_{\O_*}\s^{\frac1N-1}\big(\,|\,F\,|^2)^{\frac12}_{**}(\s)d\s,&\hbox{if }N\GEQ3,\\
c_{2,\eps}\| F\,\|_{L^{2+\eps}(\O)^2},&\hbox{if }N=2.
\end{cases}\end{equation*}
We conclude as in \cite{RakoRR} p. 118-120, Proposition 5.2.2.
\end{proof}

\begin{rem}
	The problem considered in this Section 3.1 was previously considered by
	other authors in the special case of $\overrightarrow{u}\equiv 
	\overrightarrow{0}$ (see, e.g. \cite{DalMasso Mosco 1986} and its
	references), nevertheless we emphasize that the results of this section must
	be understood as preliminary results with respect the study we shall present
	in the following sections of this paper. In particular, what is specially
	important for us is to obtain a continuous dependence estimate with respect
	to the data (namely the velocity $\overrightarrow{u}$, the potential $V,$
	and the right hand side $f$) since we need to carry out several
	perturbations of those data in the next sections. As far as we know, such
	estimates are new in the literature (and, of course, they were not given in the
	above mentioned reference).
\end{rem}

\subsection{Some regularity results with an  integrable potential $V$  and bounded from below}
As a first consequence  of Proposition  \ref{p2}   and Proposition \ref{p4} we can deduce Meyer's type regularity giving a better information on the gradient of the solution of (\ref{eq2}).
\begin{propo}\label{p5}{\bf ($W^1L^{p,q}$-estimate)}\\
	Let $N\GEQ 2$. Assume that there exists $p>N$ and $q\in[\,1,+\infty],$ such that
	\begin{equation*}\begin{cases}\hbox{$\ORA u\in L^{p,q}(\O)^N$}& V\GEQ0,\ V\in L^{r,q}(\O), \hbox{ $r=\dfrac{Np}{N+p}$},
	\\
	\hbox{$T=-\div(\ORA F)$}& \hbox{with $\ORA F\in L^{p,q}(\O)^N$}.\\
	\end{cases}\end{equation*}
	Then, the unique solution $\phi$ of the equation \eqd belongs to $W^1L^{p,q}(\O)$. 
	Moreover, there exists a constant $K_{pq}>0$ independent of $\ORA u$ such that :
	\begin{equation*}||\nabla\phi||_{L^{p,q}(\O)}\LEQ K_{pq}\left(1+||\ORA u||_{L^{p,q}}+||V||_{L^{r,q}}\right)||F||_{L^{p,q}(\O)^N}.\end{equation*}
\end{propo}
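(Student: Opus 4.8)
The plan is to move both the drift and the zeroth–order term to the right-hand side and then to invert the Dirichlet Laplacian by means of two elliptic estimates in the Lorentz scale. Since $\div\ORA u=0$, one has $\ORA u\cdot\nabla\phi=\div(\ORA u\,\phi)$ in $\D'(\O)$ (this is the distributional form of Lemma \ref{l2}), so that \eqd reads
\begin{equation*}
-\Delta\phi=-\div\big(\ORA F-\ORA u\,\phi\big)-V\phi\hin\O,\qquad\phi=0\hbox{ on }\p\O.
\end{equation*}
Because $p>N$ and $\O$ is bounded we have $L^{p,q}(\O)\hookrightarrow\LNu$ when $N\GEQ3$ (resp. $L^{p,q}(\O)\hookrightarrow L^{2+\eps}(\O)$ when $N=2$), so $\ORA F$ satisfies the hypotheses of Proposition \ref{p4} and we already know
\begin{equation*}
||\phi||_{L^\infty(\O)}\LEQ K_N(\O)\,||\ORA F||_{L^{N,1}}\LEQ c\,||\ORA F||_{L^{p,q}},
\end{equation*}
with $c$ depending only on $N,p,q,\O$. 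This boundedness of $\phi$, controlled solely by $||\ORA F||_{L^{p,q}}$, is the device that will close the estimate.

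Next I would split $\phi=\phi_1+\phi_2$: let $\phi_1\in H^1_0(\O)$ solve $-\Delta\phi_1=-\div(\ORA F-\ORA u\,\phi)$ with zero Dirichlet data (the now-known $\phi$ being frozen on the right), and set $\phi_2:=\phi-\phi_1$, so that $-\Delta\phi_2=-V\phi$ with $\phi_2=0$ on $\p\O$; by uniqueness for the Dirichlet Laplacian in $H^1_0(\O)$ this decomposition is consistent. For $\phi_1$ the datum is in divergence form with $\ORA F-\ORA u\,\phi\in L^{p,q}(\O)^N$, since $||\ORA u\,\phi||_{L^{p,q}}\LEQ||\phi||_{L^\infty}||\ORA u||_{L^{p,q}}$ by pointwise domination. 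The Calderón–Zygmund (Meyers-type) estimate for the Dirichlet Laplacian, which on the $C^{2,1}$ domain $\O$ holds on the full range $1<p<+\infty$ and extends to $L^{p,q}$ by real interpolation, gives
\begin{equation*}
||\nabla\phi_1||_{L^{p,q}}\LEQ C\,||\ORA F-\ORA u\,\phi||_{L^{p,q}}\LEQ C\big(||\ORA F||_{L^{p,q}}+||\phi||_{L^\infty}\,||\ORA u||_{L^{p,q}}\big).
\end{equation*}
For $\phi_2$ the datum $V\phi$ is of zeroth order and lies in $L^{r,q}(\O)$ with $||V\phi||_{L^{r,q}}\LEQ||\phi||_{L^\infty}||V||_{L^{r,q}}$. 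The very choice $r=\frac{Np}{N+p}$ yields $\frac1r-\frac1N=\frac1p$, i.e. $r^*=p$; since inverting $-\Delta$ on zeroth–order data behaves like a Riesz potential of order one, the Sobolev–Lorentz (O'Neil) estimate gives
\begin{equation*}
||\nabla\phi_2||_{L^{p,q}}\LEQ C\,||V\phi||_{L^{r,q}}\LEQ C\,||\phi||_{L^\infty}\,||V||_{L^{r,q}}.
\end{equation*}

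Adding the two bounds and inserting $||\phi||_{L^\infty}\LEQ c\,||\ORA F||_{L^{p,q}}$ yields
\begin{equation*}
||\nabla\phi||_{L^{p,q}}\LEQ K_{pq}\big(1+||\ORA u||_{L^{p,q}}+||V||_{L^{r,q}}\big)||\ORA F||_{L^{p,q}},
\end{equation*}
with $K_{pq}$ depending only on $N,p,q,\O$, hence independent of $\ORA u$ and $V$; in particular $\phi\in W^1L^{p,q}(\O)$, as claimed.

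The main obstacle is the pair of Lorentz-scale elliptic estimates invoked above: the divergence-datum bound for $\phi_1$ and the order-one smoothing bound for $\phi_2$. Both descend from the classical $L^p$ elliptic theory — available on the whole range $1<p<+\infty$ thanks to $\p\O\in C^{2,1}$ — via real interpolation, respectively via O'Neil's convolution inequality for the Green kernel; the delicate point is to verify that the resulting operator norms depend only on $N,p,q,\O$, so that the final constant is genuinely independent of $\ORA u$ and $V$. A secondary, purely technical matter is the rigorous justification that these manipulations are licit; this is handled cleanly by first running the argument with the bounded potentials $V_k=\min(V,k)$ and the smooth divergence-free approximations of $\ORA u$ provided by Proposition \ref{p1}, and then passing to the limit using the uniform estimate just obtained together with the convergence of Proposition \ref{p3b}.
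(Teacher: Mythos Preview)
Your proof is correct and follows essentially the same route as the paper: both use Proposition \ref{p4} to get the $L^\infty$ bound on $\phi$ independent of $\ORA u$ and $V$, then move $\ORA u\,\phi$ and $V\phi$ to the right-hand side and invoke the $W^1L^{p,q}$ regularity for the Dirichlet Laplacian, closing by approximation with $\ORA u_k\in\V$. The only cosmetic difference is that the paper absorbs $V\phi$ into divergence form by writing $V\phi=-\div(\ORA F_0)$ with $\ORA F_0\in L^{p,q}$ (which is precisely your O'Neil/Sobolev step, since $r^*=p$) and then applies a single divergence-datum estimate to $-\Delta\phi=-\div(\ORA F-\ORA F_0-\ORA u\,\phi)$, whereas you split $\phi=\phi_1+\phi_2$ and treat the two pieces separately; the underlying estimates are identical.
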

\begin{proof} 
(We shall simply  write $F,\ F_0,\ F_1$ for $\ORA F,\ \ORA F_0,\ \ORA F_1$). 
We first assume that $\ORA u\in\V$. We know from Proposition \ref{p4}  that $\phi\in L^\infty(\O)$ and that there exists a constant independent of $\ORA u,\ V$ and $\ORA F$ and $V$ such that
\begin{equation}\label{eq400}
||\phi||_\infty\LEQ K_N(\O)|| F||_{L^{p,q}(\O)}.
\end{equation}
Therefore, there exists a vector field $F_0\in L^{p,q}(\O)^N$ such that
\begin{equation*}V\phi=-\div(F_0)\hbox{ and }||F_0||_{L^{p,q}}\LEQ K_{1,N}(\O)||V||_{L^{r,q}}||\phi||_\infty,\end{equation*}
that is 
\begin{equation*}||F_0||_{L^{p,q}}\LEQ K_{1N}(\O)||V||_{L^{r,q}}||F||_{L^{p,q}(\O)}.\end{equation*}
Setting $F_1=F-F_0$, we can write (\ref{eq2}) as 
\begin{equation}\label{eq3bb}
-\Delta\phi=-\div(F_1-\ORA u\phi).
\end{equation}
But, we have $\ORA  u\phi\in L^{p,q}(\O)^N$ since  $\phi\in L^\infty(\O)$ according to the above Proposition~\ref{p4}.
Hence   
\begin{equation*}||\ORA u\phi||_{L^{p,q}(\O)^N}\LEQ ||\ORA u||_{L^{p,q}}||\phi||_{L^\infty}\LEQ K_N||F||_{L^{p,q}}||\ORA u||_{L^{p,q}}.\end{equation*}
We may apply the $W^1L^{p,q}$ result to (\ref{eq3bb}) (see 
\cite{s1, b1, a1, Ra2}) to deduce that
\begin{equation}\label{eq4}
||\nabla \phi||_{L^{p,q}}\LEQ K_p||F_1-\ORA u\phi||_{L^{p,q}}
\LEQ K_{pNq}(1+||\ORA u||_{L^{p,q}}+||V||_{L^{r,q}})||F||_{L^{p,q}}.
\end{equation}
For the general case, we consider $u_k\in\V$ such that  $u_k\to u$ strongly in $L^{p,q}(\O)^N$. Let $\phi_k$ be the solution  of equation (\ref{eq2}) where $ \phi$ is replaced by $\phi_k$
\begin{equation*}-\Delta\phi_k-\ORA u_k\nabla\phi_k+V{\phi_k}=T=-\div(F).\end{equation*}
The sequence $(\phi_k)_k$ satisfies \begin{equation*}||\phi_k||_{L^\infty}\LEQ K_N||F||_{L^{r,q}}\hbox{ and }||\phi_k||_{H^1_0}\LEQ ||T||_{H^{-1}},\end{equation*} and then  $(\phi_k)_k$ converges weakly in $H^1_0(\O)$ to $\phi$ the solution of \eqd. Since $\phi_k$ satisfies \eqq, we deduce that $\phi$ also satisfies \eqq and   \eqd.
\end{proof}

As an immediate consequence of the above result.
\begin{propo}\label{p6}\ \\
	Let $\ORA u$ and $\ORA F$ be in $L^{p,\infty}(\O)^N$ for some $p>N$. Then, the solution of \eqd satisfies \begin{equation*}\hbox{ $\phi\in C^{0,\a}(\OV\O)$ with $\a=1-\dfrac N p$.}\end{equation*}
\end{propo}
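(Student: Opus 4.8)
The plan is to reduce everything to the gradient regularity already secured in Proposition \ref{p5} and then to invoke a \emph{sharp} Morrey-type embedding into Hölder spaces; the only subtlety is that, in order to reach exactly the exponent $\alpha = 1 - N/p$, one must work with the weak-Lorentz gradient space $W^1 L^{p,\infty}(\Omega)$ and use the correct dual pairing for $L^{p,\infty}$.

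First I would apply Proposition \ref{p5} with $q = \infty$. Since $\vec u,\ \vec F \in L^{p,\infty}(\Omega)^N$ with $p > N$ (under the standing hypotheses $V \geq 0$, $V \in L^{r,\infty}(\Omega)$, $r = Np/(N+p)$), that proposition gives $\phi \in W^1 L^{p,\infty}(\Omega)$ together with the bound $\|\nabla\phi\|_{L^{p,\infty}} \leq K(1 + \|\vec u\|_{L^{p,\infty}} + \|V\|_{L^{r,\infty}})\|\vec F\|_{L^{p,\infty}}$. Thus the whole matter reduces to the embedding $W^1 L^{p,\infty}(\Omega) \hookrightarrow C^{0,1-N/p}(\overline\Omega)$. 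For this I would extend $\phi$ to $\mathbb{R}^N$ through a bounded Sobolev extension (the smooth, at least $C^{1,1}$, domain $\Omega$ being an extension domain) and start from the classical pointwise potential representation: for $x$ in a ball $B = B(x_0, 2\rho)$ one has $|\phi(x) - \phi_B| \leq C \int_B |x - z|^{-(N-1)} |\nabla\phi(z)|\, dz$. The key step is to estimate this Riesz-type integral by the Hölder inequality in Lorentz spaces, pairing $|\nabla\phi| \in L^{p,\infty}(B)$ against the kernel $z \mapsto |x - z|^{-(N-1)}$, namely $\int_B fg \leq C \|f\|_{L^{p,\infty}} \|g\|_{L^{p',1}}$ with $1/p + 1/p' = 1$. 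A direct computation of the decreasing rearrangement of the kernel on a ball of radius $\rho$ gives $h_*(s) \sim s^{-(N-1)/N}$, whence $\big\||x-\cdot|^{-(N-1)}\big\|_{L^{p',1}(B)} \leq C\,\rho^{1-N/p}$, the condition $p > N$ being exactly what makes the defining integral converge. Combining, $|\phi(x) - \phi_B| \leq C\|\nabla\phi\|_{L^{p,\infty}(\Omega)}\,\rho^{1-N/p}$, and for arbitrary $x,y$ with $\rho = |x-y|$ the triangle inequality $|\phi(x) - \phi(y)| \leq |\phi(x) - \phi_B| + |\phi(y) - \phi_B|$ over a common ball yields the Hölder seminorm bound with exponent $\alpha = 1 - N/p$; boundedness of $\phi$ is already known from Proposition \ref{p4}, so $\phi \in C^{0,\alpha}(\overline\Omega)$.

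The main obstacle is the \emph{endpoint} nature of the exponent. Using only the crude inclusion $L^{p,\infty}(\Omega) \subset L^{\tilde p}(\Omega)$ for $\tilde p < p$ (valid on a bounded domain) together with the standard Morrey embedding would merely give $\phi \in C^{0,1-N/\tilde p}(\overline\Omega)$ for every $\tilde p < p$, hence every Hölder exponent strictly below $1 - N/p$, but not the critical value itself. Recovering exactly $\alpha = 1 - N/p$ forces the refined Lorentz pairing above; in particular, the appearance of $L^{p',1}$ — rather than $L^{p'}$ — as the correct dual partner of $L^{p,\infty}$, and the sharp scaling $\||x-\cdot|^{-(N-1)}\|_{L^{p',1}(B_\rho)} \sim \rho^{1-N/p}$, are the essential technical inputs.
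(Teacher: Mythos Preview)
Your proposal is correct and follows exactly the paper's approach: apply Proposition~\ref{p5} with $q=\infty$ to obtain $\phi\in W^1L^{p,\infty}(\Omega)$, and then invoke the sharp Sobolev--Lorentz embedding $W^1L^{p,\infty}(\Omega)\hookrightarrow C^{0,1-N/p}(\overline\Omega)$. The only difference is that the paper simply cites this embedding from \cite{RakoRR}, whereas you supply a self-contained proof via the pointwise potential representation and the Lorentz--H\"older pairing $L^{p,\infty}\times L^{p',1}\to L^1$; your remark that the crude inclusion $L^{p,\infty}\subset L^{\tilde p}$ for $\tilde p<p$ would miss the endpoint exponent is accurate and is precisely why the Lorentz-space version of Morrey's inequality is needed.
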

\begin{proof}
According to the Sobolev embedding (see  \cite{RakoRR}), we have 
\begin{equation*}W^1L^{p,\infty}(\O)\hookrightarrow C^{0,\a}(\OV\O),\hbox{ with }\a=1-\dfrac Np.\end{equation*}
\end{proof}
Now we shall consider the case of more general data $\ORA u$ and $V$.
\begin{propo}\label{p7}\ \\
	Assume that $\ORA u$ and $\ORA F$ are in $\bmor(\O)^N$ and $V$ is in $\bmo_r(V)$. Then the solution $\phi$ of the equation \eqd satisfies
	\begin{enumerate}
		\item $\ORA u\phi\in\bmor(\O)^N$
		\item $\nabla \phi\in\bmor(\O)^N$.
	\end{enumerate}
\end{propo}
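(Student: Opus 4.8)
The plan is to follow the architecture of the proof of Proposition \ref{p5}, with $T=-\div\ORA F$, replacing the Lorentz regularity by its $\bmor$ counterpart; the genuinely new ingredient will be a bootstrap that promotes $\phi$ from $L^\infty$ to H\"older continuity, which is what allows the products $\ORA u\phi$ and $V\phi$ to be controlled in $\bmor$. First I would record the embedding $\bmor(\O)\hookrightarrow L_\exp(\O)\hookrightarrow L^{N,1}(\O)$: the second inclusion holds on a bounded domain because $f\in L_\exp$ gives $|f|_{**}(t)\LEQ C(1+\ln(|\O|/t))$, and $\int_0^{|\O|}t^{\frac1N-1}(1+\ln(|\O|/t))\,dt<+\infty$. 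Hence $\ORA F\in\bmor(\O)^N$ meets the hypotheses of Proposition \ref{p4}, yielding $\phi\in L^\infty(\O)$ with $\|\phi\|_{L^\infty}\LEQ K_N(\O)\|\ORA F\|_{\bmor}$. Using $\div\ORA u=0$ I then rewrite \eqd in divergence form as
\[
-\Delta\phi=\div(\ORA u\phi-\ORA F)-V\phi .
\]

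The next step is the bootstrap. Since $\ORA u,\ORA F,V\in\bmor(\O)\subset\bigcap_{p<\infty}L^p(\O)$ and $\phi\in L^\infty(\O)$, both $\ORA u\phi-\ORA F$ and $V\phi$ lie in $L^p(\O)$ for every $p<+\infty$. Calder\'on--Zygmund regularity applied to the displayed equation then gives $\nabla\phi\in L^p(\O)$ for all $p<+\infty$, so by Morrey's embedding $\phi\in C^{0,\a}(\OV\O)$ for every $\a\in(0,1)$. This is the step I expect to carry the weight: the $L^p$ estimate must be run \emph{up to the boundary} (here $\p\O$ is of class $C^{2,1}$), and one only obtains H\"older, not Lipschitz, regularity — which is nonetheless enough for what follows.

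With $\phi\in C^{0,\a}(\OV\O)$ in hand, I would establish the product lemma that $a\in\bmor(\O)$ and $b\in C^{0,\a}(\OV\O)$ imply $ab\in\bmor(\O)$. Writing $ab-a_Qb_Q=(a-a_Q)\,b+a_Q\,(b-b_Q)$, the mean of the first term over a cube $Q$ is bounded by $\|b\|_{L^\infty}\|a\|_{\bmor}$, while in the second the H\"older bound $\frac1{|Q|}\int_Q|b-b_Q|\LEQ [b]_{C^{0,\a}}\,\ell^{\a}$ (with $\ell={\rm diam}(Q)$) defeats the logarithmic growth $|a_Q|\LEQ C\|a\|_{\bmor}(1+\ln(1/\ell))$ of the averages of a $\bmor$ function, since $\ell^{\a}(1+\ln(1/\ell))$ stays bounded as $\ell\to0$; the global term $\int_\O|ab|\LEQ\|b\|_{L^\infty}\|a\|_{\bmor}$ is immediate. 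Taking $b=\phi$ yields at once assertion (1), $\ORA u\phi\in\bmor(\O)^N$, and likewise $V\phi\in\bmor(\O)$.

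Finally, for assertion (2) I would return to $-\Delta\phi=\div(\ORA u\phi-\ORA F)-V\phi$, now knowing $\ORA u\phi-\ORA F\in\bmor(\O)^N$ and $V\phi\in\bmor(\O)$. The $\bmor$-regularity for the Dirichlet Laplacian (the boundedness of the double Riesz transforms on $\bmor$, in the form of the references cited for the $W^1L^{p,q}$ estimate in the proof of Proposition \ref{p5}) shows that the divergence datum $\div(\ORA u\phi-\ORA F)$ produces $\nabla\phi\in\bmor(\O)^N$, while the lower-order datum $V\phi\in\bmor(\O)$ gains two derivatives and thus contributes an even more regular term; hence $\nabla\phi\in\bmor(\O)^N$. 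The genuinely delicate point throughout is the behaviour near $\p\O$: both the $L^p$ bootstrap and the final $\bmor$ elliptic estimate must hold up to the boundary, and the growth of the cube-averages of $\bmor$ functions has to be controlled uniformly for small cubes touching $\p\O$, which is precisely where the restriction norm (\ref{eqPW1}) and the extension property (\ref{eqPW2}) enter.
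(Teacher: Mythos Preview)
Your proof is correct and follows the same architecture as the paper's: obtain H\"older continuity of $\phi$, use a multiplier-type result to place $\ORA u\phi$ (and $V\phi$) in $\bmor$, then invoke Campanato's $\bmor$-regularity for $-\Delta$. The only differences are in packaging: the paper reaches $\phi\in C^{0,\a}(\OV\O)$ in one stroke by observing $\bmor(\O)\hookrightarrow L^{p,q}(\O)$ for every $p>N$ and applying Propositions~\ref{p5}--\ref{p6} directly (your separate Calder\'on--Zygmund bootstrap re-derives this), and for the product step it simply cites Stegenga's multiplier theorem \cite{ST,Tor} rather than proving the $\bmor\times C^{0,\a}\subset\bmor$ lemma by hand as you do.
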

\begin{proof}
Since $\bmor(\O)\hookrightarrow L^{p,q}(\O)$ for all $p>N$ and $q\in [1,+\infty]$, we  deduce from Proposition~\ref{p5}  and Proposition~\ref{p6}   that : 
\begin{equation*}\phi\in C^{0,\a}(\OV\O)\quad\forall\,\a\in\,[0,1\,[\hbox{ and }-\Delta\phi=-\div(\ORA{F_1}-\ORA u \phi),\end{equation*} where $\ORA F_1$ was defined in the proof of Proposition \ref{p5} \hbox{(see  equation (\ref{eq3bb}))}. 
From Stegenga multiplier's result, $\ORA u\phi\in\bmor(\O)^N$ whenever $\ORA u$ is in $\bmor(\O)^N$ \cite{ST,Tor}. Therefore $\ORA{F_1}-\ORA u\phi\in\bmor(\O)^N$. We may appeal to Campanato's result \cite{c1} to derive then that  $\nabla \phi\in\bmor(\O)^N$ and
\begin{equation*}
	||\nabla \phi||_\bmor\LEQ K\Big(||F||_\bmor+||\ORA u\,\phi||_\bmor+||F_0||_\bmor\Big).
\end{equation*}
\end{proof}

We shall end this paragraph by proving a $\WdLpq$-regularity result for the solutions of the dual problem \eqd which will lead to interesting conclusions for the direct problem~(\ref{eq1}).\\
For this, we shall use the following ADN constant 
\begin{equation}\label{eqADN} K^s_{pq}=\sup_{v\in H^1_0(\O)\cap \WdLpq}\dfrac{||v||_{\WdLpq}}{||v||_{L^{p,q}(\O)}+||\Delta v||_{L^{p,q}(\O)}},\end{equation}
which is  finite due to the well-known Agmon-Douglis-Nirenberg's regularity result combined with  the Marcinkiewicz interpolation Theorem.\\
We shall improve now the regularity obtained in Proposition \ref{p7}.
We consider $\eps_0>0$ (fixed) so that $K^s_{pq}\eps_0||\ORA u||_\Lpq\LEQ\frac12 $.
\begin{propo}\label{p8}{\bf ($\WdLpq$ regularity for $p>N$)}\\
	Let $\phi$ be the solution of \eqd when $T\in L^{p,q}(\O),\ p>N,\ q\in[1,+\infty]$. Assume, furthermore, that $\ORA u\in L^{p,q}(\O)^N$ and $V\in L^{p,q}(\O)$. Then
	\begin{equation*}\phi\in\WdLpq.\end{equation*}
	Moreover, there exist constants $c_{\eps_0},\ K_{pqN}>0$ such that
	\begin{equation*}||\phi||_\WdLpq\LEQ\dfrac{K_{pqN}c_{\eps_0}(1+||V||_{L^{p,q}}+||\ORA u||_{L^{p,q}(\O)})}{1-K^s_{pq}\eps_0||\ORA u||_\Lpq}||T||_\Lpq.\end{equation*}
\end{propo}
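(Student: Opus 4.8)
The plan is to recast \eqd as a Poisson equation with the convective and zeroth order terms transferred to the right--hand side, and then to invoke the Agmon--Douglis--Nirenberg constant $K^s_{pq}$ of (\ref{eqADN}) once the new right--hand side is shown to lie in $\Lpq$. Writing
\begin{equation*}-\Delta\phi=T+\ORA u\cdot\nabla\phi-V\phi,\end{equation*}
I first collect what is already available. Since $p>N$, on the bounded domain $\O$ one has the Lorentz embeddings $\Lpq\hookrightarrow L^{\frac N2,1}(\O)\hookrightarrow H^{-1}(\O)$, so Proposition \ref{p2} furnishes a unique solution $\phi$, while Proposition \ref{p3} gives $\phi\in L^\infty(\O)$ with $||\phi||_{L^\infty}\LEQ K_N(\O)||T||_{L^{\frac N2,1}}\LEQ C||T||_\Lpq$. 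In particular $V\phi\in\Lpq$ with $||V\phi||_\Lpq\LEQ||V||_\Lpq||\phi||_{L^\infty}$, so the only genuinely delicate term on the right--hand side is the convective one $\ORA u\cdot\nabla\phi$.

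The heart of the argument is an absorption estimate for this term. Using the Lorentz H\"older inequality $||fg||_\Lpq\LEQ||f||_\Lpq||g||_{L^\infty}$ gives $||\ORA u\cdot\nabla\phi||_\Lpq\LEQ||\ORA u||_\Lpq||\nabla\phi||_{L^\infty}$, and since $p>N$ the chain $\WdLpq\hookrightarrow_c C^1(\OV\O)\hookrightarrow\Lpq$ holds with compact first inclusion. Lemma \ref{l2l} then yields, for the fixed $\eps_0$, a constant $c_{\eps_0}$ with
\begin{equation*}||\nabla\phi||_{L^\infty}\LEQ\eps_0||\phi||_\WdLpq+c_{\eps_0}||\phi||_\Lpq.\end{equation*}
Feeding the decomposition of $-\Delta\phi$ into the ADN bound $||\phi||_\WdLpq\LEQ K^s_{pq}\big(||\phi||_\Lpq+||\Delta\phi||_\Lpq\big)$ and invoking the choice $K^s_{pq}\eps_0||\ORA u||_\Lpq\LEQ\frac12$, the term $K^s_{pq}\eps_0||\ORA u||_\Lpq||\phi||_\WdLpq$ is absorbed into the left--hand side; bounding the remaining occurrences of $||\phi||_\Lpq$ and $||\phi||_{L^\infty}$ by $C||T||_\Lpq$ produces exactly the claimed estimate, with denominator $1-K^s_{pq}\eps_0||\ORA u||_\Lpq$.

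The one point demanding care is that the interpolation inequality presupposes $\phi\in\WdLpq$, which is the very conclusion sought; using it directly would be circular. I would therefore run the whole computation first on the approximating problems of Proposition \ref{p5}, with $\ORA u_k\in\V$ smooth and $\ORA u_k\to\ORA u$ in $\Lpq$. For smooth $\ORA u_k$ one has $\ORA u_k\in L^\infty$, so $\ORA u_k\cdot\nabla\phi_k\in\Lpq$ already from $\nabla\phi_k\in\Lpq$ (Proposition \ref{p5}); hence $\Delta\phi_k\in\Lpq$ and the ADN theorem legitimately places $\phi_k\in\WdLpq$. The absorption estimate above is then a bona fide inequality for each $\phi_k$, uniform in $k$ as soon as $k$ is large enough that $K^s_{pq}\eps_0||\ORA u_k||_\Lpq<1$. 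Passing to the limit --- $\phi_k\rightharpoonup\phi$ in $\WdLpq$ along a subsequence, with lower semicontinuity of the norm and the identification of the limit as the solution of \eqd already established in Proposition \ref{p5} --- transfers the uniform bound to $\phi$ and closes the proof. I expect this approximation--and--absorption bookkeeping, rather than any single estimate, to be the main obstacle.
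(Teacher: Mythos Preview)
Your proposal is correct and follows essentially the same approach as the paper: approximate $\ORA u$ by smooth $\ORA u_k\in\V$, bootstrap $\phi_k$ into $\WdLpq$ via Proposition \ref{p5} and the ADN constant, apply the interpolation inequality of Lemma \ref{l2l} (using the compact embedding $\WdLpq\hookrightarrow C^1(\OV\O)$) to absorb the convective term, and then pass to the limit. The paper passes to the limit via strong convergence of $\phi_k$ in $C^1(\OV\O)$ rather than weak convergence in $\WdLpq$, which sidesteps the reflexivity issue when $q=1$, but this is a cosmetic difference and your argument goes through as well.
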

\begin{proof} 
We assume first that $\ORA u\in\V$. Arguing as in   Proposition \ref{p5}, since we can assume that $T=\div\ORA F$ for suitable $\ORA F$ we get that the solution $\phi$ of \eqd is in $W^1\Lpq$ and then \begin{equation*}-\Delta\phi=\ORA u\,\nabla\phi+T-V\phi\in\Lpq.\end{equation*}
By the Agmon-Douglis-Nirenberg regularity results and the Marcinkiewicz interpolation theorem we deduce that $\phi\in\WdLpq$. 
Moreover, since $p>N$ and $q\in[\,1,+\infty]$, we have the following continuous embeddings :
\begin{equation*}\WdLpq\hookrightarrow C^1(\OV\O)\hookrightarrow\Lpq.\end{equation*}
The first inclusion is compact so we may appeal to Lemma \ref{l2l} to derive that 
$\forall\,\eps>0$, there exists $c_\eps>0$ such that
\begin{equation}\label{eq5}
||\nabla\,\phi||_\infty\LEQ\eps||\phi||_\WdLpq+c_\eps||\phi||_\Lpq.
\end{equation}
From the equation satisfied by $\phi$, we have
\begin{equation}\label{eq6}
||\Delta\phi||_\Lpq\LEQ||\ORA u||_\Lpq||\nabla\phi||_\infty+||T||_\Lpq+||V||_{L^{p,q}}||\phi||_\infty,
\end{equation}
and using the ADN constant
\begin{equation}\label{eq7}
||\phi||_\WdLpq\LEQ K^s_{pq}\Big(||\phi||_\Lpq+||\Delta\phi||_\Lpq\Big).
\end{equation}
We combine those last three  equations and derive  that for any $\eps>0$
\begin{eqnarray}
||\phi||_\WdLpq(1-\eps K^s_{pq}||\ORA u||_\Lpq)
&\LEQ& K^s_{pq}||\phi||_\Lpq\Big(1+c_\eps||\ORA u||_\Lpq\Big)\nonumber\\
&&+K^s_{pq}||T||_\Lpq(1+||V||_{L^{p,q}})K_{2N}. \label{eq8}
\end{eqnarray}

Next, we consider $\ORA u_k\in\V$ such that $\ORA{u_k}\to\ORA u\in\OV\V$. 
Then, choosing $\eps= \eps_0>0$ such that $\DST\eps_0 K^s_{pq}\sup_k||\ORA{u_k}||_\Lpq\LEQ\dfrac12$, we deduce from relation (\ref{eq8}) that $\phi_k$   corresponding to the solution of \eqd,  that is $ -\Delta\phi_k-\ORA{u_k}\cdot\nabla\phi_k+V\phi_k=T\in\Lpq$, belongs to a bounded set of $\WdLpq$ when $k$ varies. 
Therefore, the strong limit   $\phi$ in $C^1(\OV\O)$ is the solution of \eqd and it satisfies also the relation (\ref{eq8}) for all $\eps\in]0,\eps_0\,]$.
From Proposition \ref{p3}, we have
\begin{equation}\label{eq9}
||\phi||_\Lpq\LEQ K_N(\O)||T||_\Lpq.
\end{equation}
Combining relations (\ref{eq8}) and (\ref{eq9}) with $\eps=\eps_0$, we derive the result. \end{proof}
\ \\
The case where $p=N$ can also be treated in the same way provided that the norm of $\ORA u$ in $L^{N,1}(\O)$ is small enough in the sense that 
\begin{eqnarray}\label{eq10}
||\ORA u||_{L^{N,1}(\O)}&\LEQ&\theta K^{s0}_{N1}\hbox{ for some }\theta\in[\,0,1\,[, \\ K^{s0}_{N1}&=&K^s_{N1}\sup_{\phi\in H^1_0(\O)\cap W^2L^{N,1}(\O)}\dfrac{||\nabla\phi||_\infty}{||\phi||_{W^2L^{N,1}}}.
\end{eqnarray}

\begin{propo}\label{p9}{\bf (Regularity in $\WdLNu$).}\\
	Let $\phi$ be the solution of \eqd when $T\in\LNu$, $V\in L^{N,1}(\O)$. Assume that $\ORA u$ satisfies relation (\ref{eq10}). Then $\phi\in\WdLNu$. Moreover, there exists a constant $K'_N(\O)$ (independent of $\ORA u$) such that
	\begin{equation*}||\phi||_\WdLNu\LEQ\dfrac{K'_N(\O)(1+||V||_{L^{N,1}})}{1-K^{s0}_{N1}||\ORA u||_{L^{N,1}}}||T||_\LNu.\end{equation*}
\end{propo}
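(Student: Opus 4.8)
The plan is to imitate the scheme of Proposition \ref{p8} but to treat the critical exponent $p=N$, $q=1$ with care. As there, I would first prove the estimate for $\ORA u\in\V$ and then pass to the limit along a sequence $\ORA{u}_k\to\ORA u$ in $L^{N,1}(\O)^N$. Fixing a smooth divergence-free $\ORA u$, I rewrite \eqd in the form
\begin{equation*}-\Delta\phi=\ORA u\cdot\nabla\phi+T-V\phi,\end{equation*}
so that the whole argument reduces to showing that the right-hand side belongs to $\LNu$; once this is known, the Agmon--Douglis--Nirenberg result combined with the Marcinkiewicz interpolation theorem (encoded in the ADN constant $K^s_{N1}$ of (\ref{eqADN})) gives $\phi\in\WdLNu$ together with the control $||\phi||_\WdLNu\LEQ K^s_{N1}\big(||\phi||_\LNu+||\Delta\phi||_\LNu\big)$.

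For the three terms on the right: $T\in\LNu$ by hypothesis; since $L^{N,1}(\O)\hookrightarrow L^{\frac N2,1}(\O)$ on the bounded domain $\O$, Proposition \ref{p3} yields $\phi\in L^\infty(\O)$ with $||\phi||_{L^\infty}\LEQ K_N(\O)||T||_{L^{\frac N2,1}}\LEQ c\,||T||_\LNu$, whence $V\phi\in\LNu$ and $||V\phi||_\LNu\LEQ||V||_{L^{N,1}}||\phi||_{L^\infty}$; and the convective term obeys $||\ORA u\cdot\nabla\phi||_\LNu\LEQ||\ORA u||_{L^{N,1}}||\nabla\phi||_{L^\infty}$. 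The difficulty is concentrated in this last term, since bounding $||\nabla\phi||_{L^\infty}$ already presupposes the $\WdLNu$ regularity we are trying to establish.

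This is precisely where the argument must depart from Proposition \ref{p8}. In the subcritical range $p>N$ the embedding $\WdLpq\hookrightarrow C^1(\OV\O)$ is \emph{compact}, so Lemma \ref{l2l} furnishes an arbitrarily small interpolation coefficient $\eps$ in front of $||\phi||_\WdLpq$, absorbing the convective term for every $\ORA u$. At $p=N$, $q=1$ the embedding $\WdLNu\hookrightarrow C^1(\OV\O)$ is the sharp borderline one and is no longer compact, so no such small $\eps$ is available. Instead I would use the embedding directly in the form $||\nabla\phi||_{L^\infty}\LEQ\frac{K^{s0}_{N1}}{K^s_{N1}}||\phi||_\WdLNu$ coming from the very definition of $K^{s0}_{N1}$, and combine it with $||\Delta\phi||_\LNu\LEQ||\ORA u||_{L^{N,1}}||\nabla\phi||_{L^\infty}+||T||_\LNu+||V||_{L^{N,1}}||\phi||_{L^\infty}$ and with the ADN bound, obtaining
\begin{equation*}\big(1-K^{s0}_{N1}||\ORA u||_{L^{N,1}}\big)||\phi||_\WdLNu\LEQ K^s_{N1}\big(||\phi||_\LNu+||T||_\LNu+||V||_{L^{N,1}}||\phi||_{L^\infty}\big).\end{equation*}
The smallness condition (\ref{eq10}) is designed exactly so that $1-K^{s0}_{N1}||\ORA u||_{L^{N,1}}>0$, allowing the $\WdLNu$ norm to be moved to the left; inserting the $L^\infty$ and $\LNu$ bounds on $\phi$ in terms of $||T||_\LNu$ (the latter via $L^\infty(\O)\hookrightarrow\LNu$) then yields the stated estimate.

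The main obstacle is therefore the non-compactness of this critical embedding, which forces the convective term to be absorbed through the smallness of $||\ORA u||_{L^{N,1}}$ rather than through an interpolation parameter. A secondary, technical point is the qualitative justification that $\phi\in\WdLNu$ in the first place for smooth $\ORA u$; for this I would run the method of continuity in $t$ on the family $-\Delta\phi-t\,\ORA u\cdot\nabla\phi+V\phi=T$, noting that at $t=0$ one has $-\Delta\phi=T-V\phi\in\LNu$ directly, that $||t\,\ORA u||_{L^{N,1}}\LEQ||\ORA u||_{L^{N,1}}$ keeps (\ref{eq10}) valid for every $t\in[0,1]$, and that the uniform a priori estimate above propagates the $\WdLNu$ regularity up to $t=1$. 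Finally, with the $\phi_k$ solving \eqd for $\ORA{u}_k\in\V$ uniformly bounded in $\WdLNu$, the limit $\phi$ solves \eqd and inherits the estimate by lower semicontinuity.
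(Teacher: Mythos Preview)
Your proposal is correct and follows essentially the same route as the paper, whose proof consists of the single remark that the argument of Proposition \ref{p8} goes through verbatim except that the embedding $W^1L^{N,1}(\O)\hookrightarrow C(\OV\O)$ is not compact, which is precisely what forces the smallness condition (\ref{eq10}) in place of the interpolation via Lemma \ref{l2l}. Your method-of-continuity step for the qualitative $\WdLNu$ regularity when $\ORA u\in\V$ is an unnecessary detour, since a smooth compactly supported $\ORA u$ lies in every $L^{p,1}(\O)^N$ with $p>N$ and Proposition \ref{p8} then already yields $\phi\in W^2L^{p,1}(\O)\hookrightarrow\WdLNu$.
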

\begin{proof}
The proof follows the same argument as for the proof of Proposition \ref{p8}.
Nevertheless, the embedding $W^1L^{N,1}\subset C(\OV\O)$ is not compact and this explains the condition~(\ref{eq10}).
\end{proof}

There are many other spaces between the space $L^{p,1}(\O)$ and $\LNu$ for which we can obtain a regularity result for the second derivatives of $\phi$.

Here we want only to consider the space $\Lambda=(L^N(\ln L)^{\frac\b N})^N$ for $\b>N-1$.

Indeed this space is included in $\LNu$ and contains $L^p(\O)$ for all  $p>N$.

\begin{theo}\label{t4}{\bf (Regularity in $W^2L^N(\O)$).}\\
	Let $T$ and $V$ be  in $L^N(\O),\ \ORA u\in \Lambda$, $\div(\ORA u)=0$ and $\ORA u\cdot\ORA n=0$ on $\p\O$. Then the unique solution  $\phi$ of \eqd belongs to $W^2L^N(\O)$ and choosing $\eps>0$ such that $\eps||\ORA u||_\Lambda\LEQ\dfrac12$, there exists a constant $K_\eps>0$ such that
	\begin{equation*}||\phi||_{W^2L^N(\O)}\LEQ\dfrac{K_\eps(1+||\ORA u||_\Lambda+||V||_{L^N})}{1-\eps||\ORA u||_\Lambda}||T||_{L^N(\O)}.\end{equation*}
\end{theo}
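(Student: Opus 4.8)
\textbf{Proof strategy for Theorem \ref{t4}.}
The plan is to follow the same iteration-and-approximation scheme used in Proposition \ref{p8}, but to exploit the finer mapping properties of the space $\Lambda=(L^N(\ln L)^{\frac\b N})^N$ with $\b>N-1$, which sits strictly between $L^{N,1}(\O)$ and $\bigcup_{p>N}L^p(\O)$. The key structural point is that, since $\ORA u\in\Lambda\hookrightarrow\LNu$, Proposition \ref{p9} already grants us a $\WdLNu$-solution, and hence by the embedding $\WdLNu\hookrightarrow W^1L^{N,1}(\O)\hookrightarrow C(\OV\O)$ we know $\phi$ is continuous and bounded. First I would assume $\ORA u\in\V$ (divergence-free, compactly supported) and rewrite the equation \eqd in the fixed-point form $-\Delta\phi=\ORA u\cdot\nabla\phi+T-V\phi$, treating the right-hand side as a forcing term in $L^N(\O)$.

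The heart of the argument is to verify that each term on the right lies in $L^N(\O)$ with a controlled norm. For the zeroth-order terms this is immediate: $T\in L^N(\O)$ by hypothesis, and $V\phi\in L^N(\O)$ with $\|V\phi\|_{L^N}\LEQ\|V\|_{L^N}\|\phi\|_\infty$ since $\phi\in L^\infty(\O)$ by Proposition \ref{p4}. The delicate term is the convective one $\ORA u\cdot\nabla\phi$. Here the point of choosing $\Lambda$ rather than $\LNu$ is that the product $\ORA u\cdot\nabla\phi$ can be estimated in $L^N(\O)$ by a Hölder-type inequality adapted to the Zygmund scale: writing $\ORA u\in L^N(\ln L)^{\b/N}$ and pairing it against $\nabla\phi$ in the dual exponential-logarithmic class, one obtains $\|\ORA u\cdot\nabla\phi\|_{L^N}\LEQ\|\ORA u\|_\Lambda\|\nabla\phi\|_{L^{\exp}}$, and the ADN regularity together with $\WdLNu\hookrightarrow C^1(\OV\O)$ lets us bound $\|\nabla\phi\|_\infty$ (hence the relevant exponential norm) in terms of $\|\phi\|_{\WdLNu}$. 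I would then invoke Lemma \ref{l2l} applied to the compact embedding $W^2L^N(\O)\hookrightarrow\hookrightarrow C^1(\OV\O)\hookrightarrow L^N(\O)$ to write $\|\nabla\phi\|_\infty\LEQ\eps\|\phi\|_{W^2L^N}+c_\eps\|\phi\|_{L^N}$, exactly mirroring (\ref{eq5}).

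Combining these estimates with the ADN bound $\|\phi\|_{W^2L^N}\LEQ K(\|\phi\|_{L^N}+\|\Delta\phi\|_{L^N})$, one reaches an inequality of the shape
\begin{equation*}
\|\phi\|_{W^2L^N}\big(1-\eps K\|\ORA u\|_\Lambda\big)\LEQ c_\eps K\|\phi\|_{L^N}\|\ORA u\|_\Lambda+K\big(1+\|V\|_{L^N}\big)\|T\|_{L^N}.
\end{equation*}
Choosing $\eps$ so that $\eps\|\ORA u\|_\Lambda\LEQ\frac12$ absorbs the $W^2L^N$-term on the left, and the $L^N$-bound $\|\phi\|_{L^N}\LEQ K_N(\O)\|T\|_{L^N}$ (which follows from Proposition \ref{p3} together with $L^N(\O)\hookrightarrow L^{\frac N2,1}(\O)$ being false in general — so here one uses instead the $H^1_0$ estimate of Proposition \ref{p2} and the embeddings) yields the stated estimate. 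Finally, for general $\ORA u\in\Lambda$ I would take $\ORA u_k\in\V$ with $\ORA u_k\to\ORA u$ in $\Lambda$ (Proposition \ref{p1} applies to the Zygmund class, as noted there), observe that the uniform bound places $\phi_k$ in a bounded subset of $W^2L^N(\O)$, extract a limit strong in $C^1(\OV\O)$, identify it as the solution $\phi$ of \eqd by uniqueness, and pass to the limit in the estimate.

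\textbf{Main obstacle.} The step I expect to be genuinely delicate is the $L^N$-estimate of the convective term $\ORA u\cdot\nabla\phi$: controlling a product in the borderline space $L^N$ requires the sharp Hölder duality between the Zygmund space $L^N(\ln L)^{\b/N}$ and its associate, and matching the exponent $\b>N-1$ to the logarithmic loss incurred when bounding $\nabla\phi$ through the non-compact critical embedding is precisely what forces the condition $\b>N-1$ rather than a mere $\b>0$. Everything else is a routine adaptation of Proposition \ref{p8}.
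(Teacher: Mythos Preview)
Your overall architecture mirrors the paper's, but there is a genuine gap at the absorption step. You write that you would ``invoke Lemma \ref{l2l} applied to the compact embedding $W^2L^N(\O)\hookrightarrow\hookrightarrow C^1(\OV\O)\hookrightarrow L^N(\O)$'' to obtain $\|\nabla\phi\|_\infty\LEQ\eps\|\phi\|_{W^2L^N}+c_\eps\|\phi\|_{L^N}$. This embedding is false: $W^{2,N}(\O)$ does \emph{not} embed into $C^1(\OV\O)$, because $\nabla\phi\in W^{1,N}(\O)$ and $W^{1,N}$ fails to embed in $L^\infty$ (this is the critical Sobolev case). Your alternative route through $\WdLNu\hookrightarrow C^1(\OV\O)$ via Proposition \ref{p9} does not rescue the argument either, since that proposition requires the smallness condition (\ref{eq10}) on $\|\ORA u\|_{L^{N,1}}$, which Theorem \ref{t4} is precisely designed to remove.

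The paper's remedy is what your ``Main obstacle'' paragraph gestures at but does not supply. One first proves the Trudinger-type continuous embedding $W^1_0L^N(\O)\hookrightarrow L^{1/N'}_\exp(\O)$ (Lemma \ref{l}) and then, crucially, the \emph{compact} embedding $W^1_0L^N(\O)\hookrightarrow L^\a_\exp(\O)$ for every $\a>1/N'$ (Theorem \ref{t2.2}). Lemma \ref{l2l} applied to the chain $W^2L^N\cap H^1_0\hookrightarrow W^1_0L^\a_\exp\hookrightarrow L^N$ then yields $\|\nabla\phi\|_{L^\a_\exp}\LEQ\eps\|\Delta\phi\|_{L^N}+c_\eps\|\phi\|_{L^N}$ (Corollary \ref{c2.t3}), which replaces your inequality. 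On the product side, the Hardy--Littlewood rearrangement inequality gives $\|\ORA u\cdot\nabla\phi\|_{L^N}\LEQ\|\ORA u\|_\Lambda\|\nabla\phi\|_{L^\a_\exp}$ with $\a=\b/N$; the hypothesis $\b>N-1$ is exactly $\a>1/N'$, which is what makes the compact embedding available. Two minor points: the $L^\infty$ bound on $\phi$ comes from Proposition \ref{p3}, not \ref{p4}; and the embedding $L^N(\O)\hookrightarrow L^{N/2,1}(\O)$ \emph{does} hold on a bounded domain, so $\|\phi\|_{L^N}\LEQ c\,\|\phi\|_\infty\LEQ cK_N(\O)\|T\|_{L^N}$ is available directly.
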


The proof   firstly depends on the following Trudinger's type embedding :
\begin{lem}\label{l}{\bf (Trudinger's embedding)}\ \\
	We have
	\begin{equation*}W_0^1L^N(\O)\hookrightarrow L^{\frac1{N'}}_\exp(\O).\end{equation*}
	Moreover,  for all $v\in W^1_0L^N(\O)$
	\begin{equation*}\sup_{t\LEQ|\O|}\dfrac{|v|_*(t)}{\left(1+\ln\dfrac{|\O|}t\right)^{\frac1{N'}}}\LEQ K_0||\nabla v||_{L^N(\O)},\hbox{ with } K_0=\dfrac1{N\a^{\frac1N}_N}.\end{equation*}
\end{lem}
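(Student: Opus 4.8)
$W_0^1L^N(\O)\hookrightarrow L^{1/N'}_\exp(\O)$, with the quantitative bound
$$\sup_{t\LEQ|\O|}\dfrac{|v|_*(t)}{\left(1+\ln\dfrac{|\O|}t\right)^{1/N'}}\LEQ K_0\|\nabla v\|_{L^N(\O)},\qquad K_0=\dfrac1{N\a_N^{1/N}}.$$

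Let me sketch my approach.

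The plan is to control the decreasing rearrangement $|v|_*$ pointwise by a quantity built from $\|\nabla v\|_{L^N}$ using the rearrangement machinery already developed in the excerpt, and then integrate. The key tool is Theorem~\ref{t10}(a), which for $v\in W^{1,1}_0(\O)$ (and after reducing to $v\GEQ 0$ by replacing $v$ with $|v|$) gives the pointwise Pólya–Szegő–type inequality
$$-v'_*(s)\LEQ\dfrac{s^{\frac1N-1}}{N\a_N^{\frac1N}}|\nabla v|_{*v}(s)\quad\text{a.e. in }\O_*.$$

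Let me carry out the steps.

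First I would reduce to nonnegative $v$, since $|v|_*=v_*$ whenever we rearrange $|v|$ and $|\nabla|v||=|\nabla v|$ a.e.; thus it suffices to bound $v_*$. Second, I would apply Theorem~\ref{t10}(a) and then integrate $-v'_*(s)$ from $t$ to $|\O|$. Since $v\in W^1_0L^N$ forces $v_*(|\O|)=0$, I get
$$v_*(t)=-\int_t^{|\O|}v'_*(s)\,ds\LEQ\dfrac1{N\a_N^{1/N}}\int_t^{|\O|}s^{\frac1N-1}\,|\nabla v|_{*v}(s)\,ds.$$
Third, I would estimate the integral by Hölder with exponents $N$ and $N'$:
$$\int_t^{|\O|}s^{\frac1N-1}|\nabla v|_{*v}(s)\,ds\LEQ\left(\int_t^{|\O|}s^{\left(\frac1N-1\right)N'}\,ds\right)^{1/N'}\left(\int_t^{|\O|}|\nabla v|_{*v}(s)^N\,ds\right)^{1/N}.$$
The second factor is bounded by $\||\nabla v|_{*v}\|_{L^N(\O_*)}=\|\nabla v\|_{L^N(\O)}$, since relative rearrangement is an $L^N$-isometry on the level of rearrangements (Theorem~\ref{t1t1} gives $\left|\tfrac{dw}{ds}\right|_{L^N}\LEQ|v|_{L^N}$, and here the relevant norm equals $\|\nabla v\|_{L^N}$). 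The first factor is the crucial computation: the exponent is $\left(\tfrac1N-1\right)N'=\left(\tfrac{1-N}{N}\right)\tfrac{N}{N-1}=-1$, so the integrand is exactly $s^{-1}$, and
$$\left(\int_t^{|\O|}\dfrac{ds}{s}\right)^{1/N'}=\left(\ln\dfrac{|\O|}t\right)^{1/N'}\LEQ\left(1+\ln\dfrac{|\O|}t\right)^{1/N'}.$$
Putting these together yields $v_*(t)\LEQ K_0\left(1+\ln\tfrac{|\O|}t\right)^{1/N'}\|\nabla v\|_{L^N}$, which is exactly the asserted pointwise bound; dividing and taking the supremum over $t$ gives the stated inequality and, by the definition of $L^\a_\exp$ with $\a=1/N'$, the embedding.

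The main obstacle, and the only genuinely delicate point, is the appearance of the exponent $\tfrac1N-1$ landing precisely on the borderline where $\int s^{(\frac1N-1)N'}ds$ becomes logarithmic: this is exactly the critical Sobolev borderline $p=N$, and it is what forces the double-exponential/exponential-class target space rather than an $L^q$ space. One must apply Hölder at the correct pair of conjugate exponents so that the power integrates to $\log$, and verify that the relative-rearrangement $L^N$ bound from Theorem~\ref{t1t1} (together with Theorem~\ref{t10}(a)) is available for general $v\in W_0^1L^N(\O)$, not merely smooth $v$ — but this is handled by density of smooth functions and the fact that both sides are continuous under $W_0^1L^N$-convergence. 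Everything else is routine once the conjugate exponents are chosen correctly.
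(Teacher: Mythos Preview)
Your proof is correct and follows essentially the same route as the paper's own argument: apply the pointwise rearrangement inequality from Theorem~\ref{t10}(a) to $u=|v|$, integrate from $t$ to $|\O|$, apply H\"older with exponents $N$ and $N'$ so that the power of $s$ becomes $s^{-1}$, and then invoke Theorem~\ref{t1t1} to bound $\||\nabla u|_{*u}\|_{L^N}\LEQ\|\nabla u\|_{L^N}$. One small terminological slip: Theorem~\ref{t1t1} gives a contraction, not an isometry, but you only use the inequality direction so this does not affect the argument.
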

\begin{proof} 
According to the pointwise Sobolev inequality for the relative rearrangement, we have for $u=|v|$ (see Theorem \ref{t10})
\begin{equation}\label{eq200}
-u'_*(s)\LEQ\dfrac{s^{\frac1N-1}}{N\a_N^{\frac1N}}|\nabla u|_{*u}(s).
\end{equation}
We integrate this formula from $t$ to $|\O|$ knowing that $u_*(|\O|)=0$, and using the H\"older inequality, we get
\begin{equation}\label{eq201}
u_*(t)\LEQ\dfrac1{N\a_N^{\frac1N}}\int_t^{|\O|}s^{\frac1N-1}|\nabla u|_{*u}(s)ds\LEQ\dfrac1{N\a^{\frac1N}_N}\left(\ln\dfrac{|\O|}t\right)^{\frac1{N'}}||\,|\nabla u|_{*u}||_{L^N}.
\end{equation}
Therefore from (\ref{eq201}), implies using Theorem \ref{t1t1}

\begin{equation*}\sup_{t\LEQ|\O|}\dfrac{u_*(t)}{\left(1+\ln\dfrac{|\O|}t\right)^{\frac1{N'}}}
\LEQ\dfrac1{N\a_N^{\frac1N}}||\,|\nabla u|_{*u}||_{L^N}
\LEQ\dfrac1{N\a_N^{\frac1N}}||\nabla u||_{L^N}.\end{equation*}
\end{proof}

The key result for the proof of Theorem \ref{t4} is the following compactness inclusion~:
\begin{theo}\label{t2.2}{\bf (Compact inclusion for $W^1_0L^N(\O)$).}\\
	$W^1_0L^N(\O)$ is compactly embedded in $L^\a_\exp(\O)$ for $\a>\dfrac1{N'}$.
\end{theo}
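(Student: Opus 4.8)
The plan is to exploit the sharp (critical) Trudinger embedding of Lemma \ref{l} together with the classical Rellich--Kondrachov compactness, gaining the missing compactness precisely from the strict inequality $\a>\frac1{N'}$. Let $(v_n)$ be a bounded sequence in $W^1_0L^N(\O)$. Since this space is reflexive, after extracting a subsequence I may assume $v_n\rightharpoonup v$ weakly in $W^1_0L^N(\O)$; and because $\O$ is bounded, $W^{1,N}_0(\O)$ embeds compactly into $L^p(\O)$ for every finite $p$ (Rellich--Kondrachov at the borderline exponent), so along the same subsequence $v_n\to v$ strongly in $L^p(\O)$ for a fixed $p\in[1,+\infty)$ to be chosen below. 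Setting $w_n:=v_n-v$, the family $(w_n)$ stays bounded in $W^1_0L^N(\O)$ while $w_n\to0$ in $L^p(\O)$, and the goal reduces to showing $\|w_n\|_{L^\a_\exp(\O)}\to0$.

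Next I would record the uniform bound furnished by Lemma \ref{l}: since $(w_n)$ is bounded in $W^1_0L^N(\O)$, there is a constant $C$ independent of $n$ with
\begin{equation*}
|w_n|_*(s)\LEQ C\left(1+\ln\dfrac{|\O|}s\right)^{\frac1{N'}},\qquad s\in\O_*.
\end{equation*}
The crux is then to split the supremum defining the $L^\a_\exp$ norm at a threshold $s_0\in\O_*$. For $0<s<s_0$ I would use the bound above to estimate
\begin{equation*}
\dfrac{|w_n|_*(s)}{\left(1+\ln\frac{|\O|}s\right)^\a}\LEQ C\left(1+\ln\dfrac{|\O|}s\right)^{\frac1{N'}-\a},
\end{equation*}
and since $\frac1{N'}-\a<0$ the right-hand side tends to $0$ as $s\to0$; hence, given $\eps>0$, one can fix $s_0$ so that this quantity lies below $\eps$ for all $s<s_0$, uniformly in $n$. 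For $s_0\LEQ s<|\O|$ the denominator is bounded below by $1$, so the ratio is at most $|w_n|_*(s)\LEQ|w_n|_*(s_0)$, and a Chebyshev estimate gives $|w_n|_*(s_0)\LEQ s_0^{-1/p}\,\|w_n\|_{L^p(\O)}\to0$ as $n\to\infty$. Combining the two regimes yields $\limsup_n\|w_n\|_{L^\a_\exp}\LEQ\eps$, and letting $\eps\to0$ gives $\|w_n\|_{L^\a_\exp}\to0$, i.e. $v_n\to v$ in $L^\a_\exp(\O)$, which is the asserted compactness.

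I expect the main (though mild) obstacle to be the uniformity in $n$ of the small-$s$ estimate: it is exactly here that the strict gap $\a>\frac1{N'}$ is indispensable, since it converts the critical logarithmic growth allowed by Trudinger's embedding into a negative power of the logarithm that decays as $s\to0$ independently of $n$. At the endpoint $\a=\frac1{N'}$ the embedding of Lemma \ref{l} is merely continuous and the small-$s$ contribution no longer decays, so the argument must genuinely rely on $\a>\frac1{N'}$; the remaining ingredients (reflexivity, Rellich--Kondrachov, and the Chebyshev bound on the rearrangement) are standard.
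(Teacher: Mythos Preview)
Your proposal is correct and follows essentially the same approach as the paper: split the supremum at a threshold, control the small-$s$ region via Lemma~\ref{l} together with the strict inequality $\a>\frac1{N'}$, and control the large-$s$ region via Rellich--Kondrachov compactness. The only cosmetic difference is that for $s\GEQ s_0$ the paper bounds $|w_n|_*(t)$ by its average $\frac1\delta\int_0^\delta|w_n|_*(s)\,ds\LEQ\frac1\delta\|w_n\|_{L^1}$ rather than by the Chebyshev estimate $s_0^{-1/p}\|w_n\|_{L^p}$ you use; both variants rely on the same strong $L^p$ convergence of the subsequence.
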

\begin{proof} 
Let $(u_n)_n$ be a bounded sequence in $W^1_0L^N(\O)$. We may assume that $u_n\rightharpoonup u$ in $W^1_0L^N(\O)$-weakly and almost everywhere in $\O$. Let $c=\DST\Max_n||u_n-u||_{L^{\frac1{N'}}_\exp}<+\infty$.\\
For $\eps>0$, there exists $\d>0$ such that
\begin{equation*}\dfrac c{\left(1+\ln\dfrac{|\O|}t\right)^{\a-\frac1{N'}}}\LEQ \eps \hbox{ for all }t\LEQ\d.\end{equation*}
Therefore, we have :
if $t\LEQ\d$
\begin{equation*}\dfrac{|u_n-u|_*(t)}{\left(1+\ln \dfrac{|\O|}t\right)^\a}\LEQ\dfrac c{\left(1+\ln\dfrac{|\O|}t\right)^{\a-\frac1{N'}}}\LEQ \eps ;\end{equation*}
if $t>\d$ then, since $|u_n-u|_*$ is nonincreasing 
\begin{equation*} {|u_n-u|_*(t)} \LEQ\dfrac1{\d }\int_0^\d|u_n-u|_*(s)ds,\end{equation*}
so that
\begin{equation*}\sup_{t\GEQ \d}\dfrac{|u_n-u|_*(t)}{\left(1+\ln\dfrac{|\O|}t\right)^\a}\LEQ\dfrac1{\d }\int_0^\d|u_n-u|_*(s)ds.\end{equation*}
The right hand side of this inequality tends to zero as $n$ goes to infinity. Hence, for $n\GEQ n_\eps$ with  $n_\eps$ large enough
\begin{equation*}
	\sup_{0<t<|\O|}\dfrac{|u_n-u|_*(t)}{\left(1+\ln\dfrac{|\O|}t\right)^\a}\LEQ \eps.
\end{equation*}
\end{proof}

As a corollary of the above theorem, since $W^2L^N\cap W^1_0L^N\hookrightarrow W^1_0L^\a_\exp\hookrightarrow L^N$, we have:
\begin{cor}\label{c2.t3}{\bf (of Theorem \ref{t2.2})}\\
	Let $\a>\frac1{N'}$. Then, for every $\eps>0$, there exists $c_\eps>0$ such that $\forall\,v\in W^2L^N(\O)\!\cap\,H^1_0(\O)$
	\begin{equation*}||\nabla v||_{L^\a_\exp}\LEQ\eps||\Delta v||_{L^N}+c_\eps||v||_{L^N}.\end{equation*}
\end{cor}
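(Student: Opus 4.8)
The plan is to obtain the inequality as a direct application of the interpolation lemma (Lemma \ref{l2l}) to a suitable triple of spaces, the required compactness being supplied by an adaptation of Theorem \ref{t2.2}. Concretely, I would set $X=W^2L^N(\O)\cap H^1_0(\O)$ equipped with the norm $||v||_X=||v||_{L^N}+||\Delta v||_{L^N}$, let $Y$ be the space of functions whose gradient lies in $L^\a_\exp(\O)$, normed by $||v||_Y=||v||_{L^N}+||\nabla v||_{L^\a_\exp}$, and take $Z=L^N(\O)$. The inclusion $Y\hookrightarrow Z$ is immediate, and the Agmon--Douglis--Nirenberg estimate for the Dirichlet Laplacian (valid since $1<N<\infty$) shows that $||v||_X$ is equivalent to the full $W^2L^N$-norm on $X$; this is precisely what lets me replace the $W^2L^N$-norm by $||\Delta v||_{L^N}+||v||_{L^N}$ in the conclusion. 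The (non-compact) inclusion $X\hookrightarrow Y$ holds because the Trudinger-type embedding gives $\nabla v\in L^\a_\exp$ with norm controlled by $||v||_X$.

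The key step is the compactness of $X\hookrightarrow Y$. Let $(v_n)$ be bounded in $X$. By the classical Rellich--Kondrachov embedding $W^{1,N}(\O)\hookrightarrow\hookrightarrow L^N(\O)$ I may assume $v_n\to v$ in $L^N(\O)$ along a subsequence. For the gradient part, each component $\p_iv_n$ is bounded in $W^1L^N(\O)$, so a compact embedding $W^1L^N(\O)\hookrightarrow L^\a_\exp(\O)$ (for $\a>\frac1{N'}$) allows me to pass to a further subsequence along which $\p_iv_n$ converges in $L^\a_\exp(\O)$ for every $i$, i.e. $\nabla v_n$ converges in $(L^\a_\exp)^N$. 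Hence $v_n$ converges in $Y$, which proves $X\hookrightarrow_c Y$.

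With the triple $X\hookrightarrow_c Y\hookrightarrow Z$ in hand, Lemma \ref{l2l} furnishes, for every $\eps>0$, a constant $c_\eps>0$ such that $||v||_Y\LEQ\eps||v||_X+c_\eps||v||_Z$ for all $v\in X$. Spelling this out and using the norm equivalence on $X$,
\begin{equation*}
||\nabla v||_{L^\a_\exp}+||v||_{L^N}\LEQ\eps\big(||\Delta v||_{L^N}+||v||_{L^N}\big)+c_\eps||v||_{L^N},
\end{equation*}
and discarding the non-negative term $||v||_{L^N}$ on the left while absorbing the $\eps||v||_{L^N}$ term into $c_\eps||v||_{L^N}$ yields exactly the asserted estimate after renaming the constant.

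The main obstacle is the compactness input used in the second paragraph: Theorem \ref{t2.2} is proved only for $W^1_0L^N(\O)$, with vanishing boundary trace, whereas the components $\p_iv$ of the gradient do not vanish on $\p\O$ and lie only in the full space $W^1L^N(\O)$. I would close this gap by repeating the argument of Theorem \ref{t2.2} for $W^1L^N(\O)$: the only place the zero boundary condition entered was through the pointwise rearrangement bound of Theorem \ref{t10}(a), and replacing it by the connected-domain estimate of Theorem \ref{t10}(b) (which holds for $u\in W^{1,1}(\O)$ without boundary conditions, at the cost of the factor $\min(s,|\O|-s)^{\frac1N-1}$) produces the corresponding Trudinger embedding $W^1L^N(\O)\hookrightarrow L^{\frac1{N'}}_\exp(\O)$ and, verbatim as in Theorem \ref{t2.2}, its compactness into $L^\a_\exp(\O)$ for $\a>\frac1{N'}$. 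Apart from this boundary issue every step is routine.
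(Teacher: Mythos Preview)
Your proposal is correct and follows exactly the paper's approach: apply Lemma~\ref{l2l} to the triple $X=W^2L^N(\O)\cap H^1_0(\O)$, $Y=W^1_0L^\a_\exp(\O)$, $Z=L^N(\O)$, together with the ADN norm equivalence $||v||_X\equiv||\Delta v||_{L^N}+||v||_{L^N}$. You have in fact been more careful than the paper, which simply asserts the chain $W^2L^N\cap W^1_0L^N\hookrightarrow W^1_0L^\a_\exp\hookrightarrow L^N$ without discussing the boundary issue for the gradient components that you correctly isolated and resolved via Theorem~\ref{t10}(b).
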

\begin{proof}
We  use the equivalence of norms $||v||_{W^2L^N(\O)\cap H^1_0}\!\!\equiv\!\!||\Delta v||_{L^N}\!+\!||v||_{L^N}$ and apply Lemma~\ref{l2l} with
\begin{equation*}Y=W^1_0L^\a_\exp(\O),\quad X=W^2L^N(\O)\cap H^1_0(\O),\quad Z=L^N(\O).\end{equation*}
\end{proof}

\begin{proof}[Proof of Theorem \ref{t4}]
We first assume that $\ORA u\in\V$, and $T\in L^\infty(\O)$. Then, the unique solution $\phi$ of \eqd satisfies 
\begin{eqnarray}
||\Delta\phi||_{L^N}&\LEQ&||T||_{L^N}+||\ORA u\cdot\nabla \phi||_{L^N}+||V||_{L^N}||\phi||_\infty\nonumber\\
&\LEQ&K_N(1+||V||_N)||T||_N+||\ORA u\cdot\nabla\phi||_{L^N}\label{eq11}.
\end{eqnarray}
We have
\begin{equation*}||\ORA u\cdot\nabla\phi||^N_{L^N}
\LEQ\int_{\O_*}|\ORA u|^N_*|\nabla\phi|^N_*dt\LEQ \sup_{t\in\O_*}\dfrac{|\nabla \phi|_*^N(t)}{\left(1+\ln\dfrac{|\O|}t\right)^\b}\int_{\O_*}|\ORA u|^N_*(t)\left(1+\ln\dfrac{|\O|}t\right)^\b dt,\end{equation*}
which implies
\begin{equation}\label{eq12}
||\ORA u\nabla \phi||_{L^N}\LEQ
||\nabla \phi||_{L_\exp^\a}||\ORA u||_\Lambda\hbox{ with }\a=\dfrac\b N>\dfrac1{N'}.
\end{equation}
Let $\eps>0$ be fixed. There exists $c_\eps>0$ such that
\begin{equation*}||\ORA u\cdot\nabla \phi||_{L^N}\LEQ(\eps||\Delta\phi||_{L^N}+c_\eps||\phi||_{L^N})||\ORA u||_\Lambda\end{equation*}
(see  Corollary \ref{c2.t3} of Theorem \ref{t2.2}). 
Combining this  with relation (\ref{eq11}), we have $\forall\,\eps>0,\ \exists\,c_\eps^1>0$
\begin{equation}\label{eq13}
||\Delta\phi||_{L^N}(1-\eps||\ORA u||_\Lambda)\LEQ c^1_\eps(1+||\ORA u||_\Lambda
+||V||_{L^N})||T||_{L^N}.
\end{equation}
Secondly, we consider $T\in L^N(\O)$ and $\ORA u\in\OV\V$. 
There exist $\ORA{ u_k}\in\V$ such that $\ORA u_k\to\ORA u$  strongly in $\Lambda$ 
and $T_k\in L^\infty(\O)$ with \begin{equation*}||T_k||_{L^N}\LEQ ||T||_{L^N}.\end{equation*}
Then from relation (\ref{eq13}), the solution $\phi_k$ of \eqd satisfies
\begin{equation}\label{eq14}
||\Delta \phi_k||_{L^N}(1-\eps||\ORA {u_k}||_\Lambda)\LEQ c_\eps^1(1+||\ORA u_k||_\Lambda+||V||_{L^N})||T||_{L^N}.
\end{equation}
We choose $\DST\eps_0>0$ such that \begin{equation*}\eps_0\sup_k||u_k||_\Lambda\LEQ\dfrac12.\end{equation*}
Then $\phi_k$ remains in a bounded set of $W^2L^N(\O)\cap H^1_0(\O)$. So it converges   to $\phi$ weakly in $W^2L^N(\O)\cap H^1_0(\O)$ and we have
\begin{equation}\label{eq15}
||\Delta\phi||_{L^N}(1-\eps_0||\ORA u||_\Lambda)\LEQ c_{\eps_0}^1(1+||\ORA u||_\Lambda+||V||_{L^N})||T||_{L^N},
\end{equation}
and
\begin{equation*}
||\phi||_{L^N}\LEQ |\O|^{\frac1N} ||\phi||_\infty\LEQ K_N(\O)||T||_{L^N(\O)}
\end{equation*}
(according to Proposition \ref{p3}). 
This gives the results.
\end{proof}


\section{Very weak solutions of problem (\ref{eq1}) with and without the Dirichlet boundary condition.}
We now want to apply all those regularity results to the study of equation (\ref{eq1}). We first start with  some definitions of the weak solution associated with (\ref{eq1}).

\subsection{Existence and regularity of the very weak solution for a locally integrable potential $V\GEQ0$}
We start by considering the existence of very weak solutions of equation (\ref{eq1}) with the Dirichlet boundary condition \eqd when the potential $V$ is a nonnegative locally integrable function.\\
We can use the definition of very weak solution (see Definition \ref{debdef}).

\begin{theo}\label{t2.3}\ \\
	Let $f\in L^1(\O;\d)$. Let $\ORA u$ be in $L^{p,1}(\O)^N$ with $\div(\ORA u)=0$ in $\D'(\O)$, 
	$\ORA u\cdot\ORA n=0$ on $\p\O$. Furthermore, assume that either $p>N$ or $p=N$ and 
	$||\ORA u||_{L^{N,1}}< K^{s0}_{N1}$ (see  (\ref{eq10})). Then,  there exists a  very weak solution $\omega$ in the sense of (\ref{eq1XV}), which is unique, if   $V\in L^{p,1}(\O)$.
\end{theo}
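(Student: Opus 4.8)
The plan is to obtain $\omega$ by \emph{transposition}, reducing the whole problem to the solvability and regularity of the dual problem $(\calP)_{V,T}$ of (\ref{eq1XVI})--(\ref{eq2}) studied in Section 3. Since $N'=\frac N{N-1}$, the Lorentz duality $\big(\LNu\big)'=L^{N',\infty}(\O)$ means it suffices to construct a bounded linear functional on $\LNu$. For $T\in\LNu$ let $\phi_T\in H^1_0(\O)$ be the solution of $(\calP)_{V,T}$ given by Proposition \ref{p2}; the decisive ingredient is the weighted bound
\begin{equation*}|\phi_T(x)|\LEQ C\,\d(x)\,\|T\|_{\LNu},\qquad x\in\O,\end{equation*}
with $C$ \emph{independent of} $V\GEQ0$. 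I would get this by comparison with the potential-free dual problem: for $T\GEQ0$ one has $\phi_T\GEQ0$ (Proposition \ref{p3bb}), and the maximum principle — using $\div\ORA u=0$, Lemma \ref{l1}, and $V\phi_T\GEQ0$ — yields $0\LEQ\phi_T\LEQ\psi_T$, where $\psi_T$ solves $-\Delta\psi-\ORA u\cdot\nabla\psi=T$, $\psi=0$ on $\p\O$; since $\psi_T\in\WdLNu\hookrightarrow C^1(\OV\O)$ (Propositions \ref{p8}--\ref{p9}) vanishes on $\p\O$, it obeys $|\psi_T|\LEQ\|\nabla\psi_T\|_\infty\,\d\LEQ C\|T\|_{\LNu}\,\d$, and the bound on $\phi_T/\d$ follows. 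Consequently
\begin{equation*}\Big|\int_\O f\,\phi_T\,dx\Big|\LEQ\|f\|_{L^1(\O;\d)}\,\Big\|\tfrac{\phi_T}\d\Big\|_{\infty}\LEQ C\,\|f\|_{L^1(\O;\d)}\,\|T\|_{\LNu},\end{equation*}
so $T\mapsto\int_\O f\phi_T\,dx$ is represented by a unique $\omega\in L^{N',\infty}(\O)$ with $\int_\O\omega\,T\,dx=\int_\O f\phi_T\,dx$.

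Uniqueness in the case $V\in L^{p,1}(\O)$ then falls out of the \emph{full} regularity of the dual problem. If $\omega$ is a very weak solution with $f\equiv0$, then for every $T\in L^{p,1}(\O)$ the solution $\phi_T$ lies in $W^2L^{p,1}(\O)\cap H^1_0(\O)\hookrightarrow C^1(\OV\O)$ (Proposition \ref{p8} if $p>N$, Proposition \ref{p9} under the smallness (\ref{eq10}) if $p=N$). By the density Theorem \ref{tt4} such a $\phi_T$ is an admissible test function in (\ref{eq1XV}) (note $L^{p,1}(\O)\subset L^1(\O;\d)$, so the admissible class is that of $\f\in C^2(\OV\O)$ vanishing on $\p\O$), whence $\int_\O\omega\,T\,dx=0$. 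Since $p\GEQ N$, the set $C^\infty_c(\O)\subset L^{p,1}(\O)$ is dense in $\LNu$, so $\omega=0$ in $L^{N',\infty}(\O)=\big(\LNu\big)'$.

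For a merely locally integrable $V\GEQ0$ I would identify $\omega$ as the limit of the monotone approximation $V_k=\min(V,k)$ used in Proposition \ref{p3b}. Each bounded $V_k$ lies in $L^{p,1}(\O)$, so the previous step furnishes a unique very weak solution $\omega_k\in L^{N',\infty}(\O)$, and the point is to obtain two $k$-uniform a priori estimates. First, $\|\omega_k\|_{L^{N',\infty}}\LEQ C\|f\|_{L^1(\O;\d)}$ with $C$ independent of $k$ — this is immediate from the transposition bound above, whose constant was shown to be potential-free. Second, $\int_\O V_k|\omega_k|\,\d\,dx\LEQ C\|f\|_{L^1(\O;\d)}$: splitting $f=f_+-f_-$ so that $\omega_k^{\pm}\GEQ0$ and testing the equation for $\omega_k^{\pm}$ against the torsion function $\phi_0$ ($-\Delta\phi_0=1$, $\phi_0=0$ on $\p\O$, with $c\,\d\LEQ\phi_0\LEQ C\,\d$ by Hopf) gives $\int_\O V_k\omega_k^{\pm}\phi_0\,dx=\int_\O f_{\pm}\phi_0\,dx-\int_\O\omega_k^{\pm}\,dx+\int_\O\omega_k^{\pm}\,\ORA u\cdot\nabla\phi_0\,dx$, and the right-hand side is $\LEQ C\|f\|_{L^1(\O;\d)}$ because $\ORA u\cdot\nabla\phi_0\in\LNu$ pairs with $\omega_k^{\pm}\in L^{N',\infty}(\O)$.

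Passing to the limit is where the \textbf{main difficulty} lies. Weak-$*$ compactness of $L^{N',\infty}(\O)$ gives $\omega_k\rightharpoonup\omega$, while the monotonicity of $k\mapsto V_k$ forces $\omega_k^{\pm}$ to be monotone, so $\omega_k\to\omega$ a.e. and — being dominated by $\omega_1\in L^r$ — in $L^r(\O)$ for $r<N'$. For fixed $\f\in C^2_c(\O)$ the terms $\int_\O\omega_k(-\Delta\f-\ORA u\cdot\nabla\f)\,dx$ converge, since $-\Delta\f-\ORA u\cdot\nabla\f\in\LNu$ (as $\ORA u\in L^{N,1}(\O)$ and $\nabla\f\in L^\infty$). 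The delicate term is the absorption integral $\int_\O V_k\omega_k\f\,dx$, where only $V_k\nearrow V$ pointwise and the uniform $L^1(\O;\d)$ bound are at hand: Fatou's lemma yields both $V\omega\in L^1(\O;\d)$ and, for $\f\GEQ0$, $\int_\O V\omega\f\LEQ\liminf_k\int_\O V_k\omega_k\f$, while the equation shows that $\lim_k\int_\O V_k\omega_k\f$ exists and equals $\int_\O f\f-\int_\O\omega(-\Delta\f-\ORA u\cdot\nabla\f)$. The reverse inequality — equivalently, excluding a concentration (defect measure) of the absorption masses $V_k\omega_k\,dx$ — is the crux, which I would settle by upgrading the plain $L^1(\O;\d)$ bound to equi-integrability on compact subsets via a truncation argument (so that Scheff\'e's lemma promotes the a.e. convergence $V_k\omega_k\to V\omega$ to convergence of the integrals). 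Once this is in place, $\omega$ satisfies (\ref{eq1XV}) for all $\f\in C^2_c(\O)$ and is the desired very weak solution.
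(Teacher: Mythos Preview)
Your route is genuinely different from the paper's. The paper approximates $f$, $\ORA u$ and $V$ \emph{simultaneously} so that the approximate solutions $\omega_j$ are smooth ($H^1_0\cap W^2L^{p,1}$); a.e.\ convergence is then extracted from a weighted gradient estimate $\int_\O|\nabla T_k(\omega_j)|^2\d\,dx\LEQ c_0k$ (their Propositions \ref{p11}--\ref{p12}), and the absorption term is handled by proving local equi-integrability of $V_j\omega_j\d$ through testing the smooth equation with $\gamma_m(\omega_j)\f_1$. Your strategy---transposition for each bounded $V_k$, then monotone approximation in $V$ only---obtains a.e.\ convergence from comparison in the dual problem and never needs gradient bounds. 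This is lighter for the bare existence claim, though it does not deliver the regularity $\nabla\omega\in L^{1+\frac1N,\infty}(\O;\d)$ that the paper gets as a byproduct.

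There is, however, a real gap in your resolution of the ``crux''. The truncation argument you invoke (test with a nonlinear function of the solution to cut the mass on $\{|\omega_k|>t\}$) cannot be run on your $\omega_k$: the transposition solution lies only in $L^{N',\infty}(\O)$ and satisfies only the very weak identity (\ref{eq1XV}), so you have no right to multiply its equation by $\gamma_m(\omega_k)$---this is precisely why the paper first manufactures smooth $\omega_j$. (Scheff\'e is also misapplied: it presupposes the convergence of integrals you are trying to prove.) Fortunately the monotonicity you already established closes the argument without equi-integrability. For $f\GEQ0$ and $\phi\in C^2_c(\O)$, $\phi\GEQ0$: since $\omega_k\GEQ\omega\GEQ0$ one has $V_k\omega_k\GEQ V_k\omega$, hence
\[
\ell=\lim_k\int_\O V_k\omega_k\phi\,dx\ \GEQ\ \lim_k\int_\O V_k\omega\,\phi\,dx\ =\ \int_\O V\omega\,\phi\,dx
\]
by monotone convergence (the right-hand side is finite because $\phi\LEQ c_\phi\d$ and you already know $V\omega\in L^1(\O;\d)$ from Fatou). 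Together with your Fatou inequality $\int_\O V\omega\phi\,dx\LEQ\ell$ this gives equality, so $\omega$ is a v.w.s.; the general $f$ follows from $f=f_+-f_-$ and linearity (legitimate since uniqueness holds for each bounded $V_k$).
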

\begin{rem}\ \\
	In section 4.2,  we shall discuss the uniqueness of the v.w.s when  $V\notin L^{N,1}(\O)$.
\end{rem}
\begin{proof} 
First, we assume  that $f\GEQ0$. 
Let $u_j\in\V$ be such that ${\ORA u}_j\to\ORA u$ strongly in $L^{p,1}(\O)^N$ and $f_j\in L^\infty(\O)$ such that $0\LEQ f_j(x)\LEQ f(x)$ a.e and $f_j(x)\to f(x)$ a.e. According to Proposition \ref{p3b}, Proposition \ref{p8} or Proposition \ref{p9}, there exists a unique function $\omega_j\GEQ0$ such that 
\begin{equation}\label{eq17}
\begin{cases}
-\Delta\omega_j+{\ORA u}_j\cdot\nabla \omega_j+V_j\omega_j=f_j,\\
\omega_j\in H^1_0(\O)\cap W^2L^{p,1}(\O),
\end{cases}
\end{equation}
which is equivalent to saying that 
\begin{equation}\label{eq18}
\begin{cases}
\DST\int_\O\omega_j\Big[-\Delta\phi-{{\ORA u}_j}\cdot\nabla \phi\Big]dx=\int_\O f_j\phi\, dx
-\int_\O V_j\omega_j\phi dx,\\
\forall\,\phi\in W^2L^{p,1}(\O)\cap H^1_0(\O).
\end{cases}
\end{equation}
We argue as in \cite{DR1, DRJFA, Ra2}. Let $E$ be a measurable subset of $\O$ and $\chi_E$ its characteristic function. Then, there exists a non negative function $\phi_j\in W^2L^m(\O),\ \forall\, m<+\infty$, satisfying
\begin{equation}\label{eq19}
\begin{cases}
-\Delta\phi_j-{{\ORA u}_j}\nabla \phi_j=\chi_E\hbox{ in }\O,\\
\phi_j=0\hbox{ on }\p\O.
\end{cases}
\end{equation}
We consider a small number $\eps>0$ such $\eps\DST\sup_j||{\ORA u}_j||_{L^{N,1}}\LEQ\dfrac12$. Therefore, we have 
\begin{equation*}||\phi_j||_{W^2L^{N,1}}\LEQ K_0||\chi_E||_{L^{N,1}}\LEQ K_1|E|^{\frac1N}.\end{equation*}
Thus
\begin{eqnarray}\label{eq20}
\int_E\omega_jdx=\int_\O\omega_j\big[-\Delta\phi_j-\ORA {u_j}\nabla \phi_j\big]dx
&\LEQ&\int_\O f_j\phi_j\LEQ K_1\left(\int_\O|f_j|\d\right)||\phi_j||_{W^2L^{N,1}}\nonumber\\
&\LEQ& K_0|E|^{\frac1N}
\int_\O|f_j|\d dx.\end{eqnarray}
By the Hardy-Littlewood  property we conclude that
\begin{equation}\label{eq600}
\sup_{t\LEQ|\O|}t^{\frac 1{N'}}|\omega_j|_{**}(t)\LEQ K_0\int_\O|f_j|\d dx\LEQ K_0\int_\O|f|\d dx.
\end{equation}
Moreover, choosing  $\phi=\f_1$ as the test function  with $-\Delta\f_1=\lambda_1\f_1$, and $\f_1=0$ on $\partial\O$, we have
\begin{eqnarray*}
	\lambda_1\int_\O\omega_j\f_1dx+\int_\O V_j\omega_j\f_1dx&\LEQ& ||\nabla\f_1||_\infty\,||\omega_j||_{L^{N',\infty}}||{\ORA u}_j||_{L^{N,1}}+c\int_\O|f_j|\d dx \\
	&\LEQ&c\big(1+||{\ORA u}_j||_{L^{N,1}}\big)\int_\O|f_j|\d dx,
\end{eqnarray*}
for a suitable constant $c>0$.
Thus $V_j\omega_j$ remains in a bounded set of $L^1(\O;\d)$ and
\begin{equation}\label{eq601}
\int_\O V_j\omega_j\d dx\LEQ c\big(1+||{\ORA u}_j||_{L^{N,1}}\big)\int_\O|f_j|\d dx.
\end{equation}
If $f$ has a constant sign, we write  $f_j=f_{j+}-f_{j-}$ with $f_{j+}=\max(f_j,0)\GEQ0$.\\
Denoting by $\omega^+_j$ the v.w.s. associated to $f_{j+}$ and  by $\omega_{j}^-$ the one associated to $f_{j-}$,we see that $\omega_j=\omega_j^+-\omega_j^-$ satisfies (\ref{eq18}) and  we have also the estimates (\ref{eq600}) and (\ref{eq601}).\\
In particular, since $|\omega_j|\LEQ\omega_j^++\omega_j^-$  
\begin{equation}\label{eq3900}
\int_\O V_j|\omega_j|\d dx\LEQ c\big(1+||u_j||_{L^{N,1}}\big)\int_\Omega|f_j|\d dx.
\end{equation}

We conclude that $(\omega_j)_j$ converges weak-* to $\omega$ in $L^{N',\infty}(\O)=\big(L^{N,1}(\O)\big)^*$.
To obtain 
a strong convergence, we need a local estimate of the gradient.
For that purpose, we shall  prove  the boundedness of $\omega_j$  in the Lorentz-Sobolev weighted space $W^1L^{1+\frac1N,\infty}(\O;\d)$. For this, we shall need the following result due to Philippe  B\'enilan  and co-authors whose proof can be found in \cite{BBGG} Lemma 4.2, with generalization  in \cite{RabookLinear}.

\begin{propo}\label{p11}\ \\
	Let $v\in L^1(\O,\d^\a),$ and $ \a\in[\,0,1\,]$. Assume that there exists a constant $c_0>0$ such that  for all $k>0$
	\begin{equation*}T_k(v):=\min(|v|;k)\ \sign(v)\in W^1L^2(\O,\d^\a),\end{equation*}
	and 
	\begin{equation}\label{eq24bis}
	\int_\O|\nabla T_k(v)|^2\d^\a dx+\int_\O|T_k(v)|^2\d^\a dx\LEQ c_0 k.
	\end{equation}
	Then, there exists a constant $c$, depending continuously on $c_0>0$, such that for all $\lambda>0$
	\begin{equation*}\int_{\{x:|\nabla v|(x)>\lambda\}}\d^\a(x)dx\LEQ\dfrac c{\lambda^{1+\frac1{N+\a-1}}}.\end{equation*}
	
	In particular, if $v_j$ is a sequence converging weakly in $L^1(\O)$ to a function $v$, satisfying the inequality (\ref{eq24bis})
	\begin{equation*}\int_\O|\nabla T_k(v_j)|^2\d^\a dx\LEQ c_0k\qquad\qquad\forall j,\ \forall k,\end{equation*}
	then $v_j$ converges to $v$ weakly in $W^{1,q}(\O')$ for all  $q\in\left[1,\dfrac{N+\alpha}{N+\a-1}\right[$ and all $\O'\subset\!\subset\O$, with a subsequence, $v_j(x)\to v(x)$ a.e. in $\O$.
\end{propo}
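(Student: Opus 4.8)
The plan is to run the truncation method of B\'enilan--Boccardo--Gallou\"et, adapted to the weight $\d^\a$, for which the effective dimension is $N+\a$ rather than $N$; note that $1+\frac1{N+\a-1}=\frac{N+\a}{N+\a-1}$, so that $\a=0$ recovers the classical exponent $\frac N{N-1}$. Throughout I write $\mu(E)=\int_E\d^\a\,dx$ for the weighted measure. Since $\nabla T_k(v)=\nabla T_{k'}(v)$ a.e. on $\{|v|<\min(k,k')\}$, the generalized gradient $\nabla v$ is well defined a.e. as the common value of $\nabla T_k(v)$ on $\{|v|<k\}$; the conclusion $\nabla v\in L^q_{\loc}$ is precisely what upgrades $v$ to $W^{1,q}_{\loc}(\O)$.

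First I would establish a Marcinkiewicz estimate on $v$ itself. The key analytic input is a weighted Sobolev inequality of the form $\big(\int_\O|w|^{2^*_\a}\d^\a\,dx\big)^{2/2^*_\a}\LEQ C\big(\int_\O|\nabla w|^2\d^\a\,dx+\int_\O|w|^2\d^\a\,dx\big)$, with $2^*_\a=\frac{2(N+\a)}{N+\a-2}$, valid because $\d^\a$ is an admissible (Muckenhoupt) weight carrying effective dimension $N+\a$. Applying it to $w=T_k(v)$ and invoking the hypothesis (\ref{eq24bis}) gives $\big(\int_\O|T_k(v)|^{2^*_\a}\d^\a\big)^{2/2^*_\a}\LEQ Cc_0k$. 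Since $|T_k(v)|=k$ on $\{|v|>k\}$, the left-hand side is $\GEQ\big(k^{2^*_\a}\mu(\{|v|>k\})\big)^{2/2^*_\a}$, whence $\mu(\{|v|>k\})\LEQ C'\,k^{-\frac{N+\a}{N+\a-2}}$ for all $k>0$, with $C'$ depending continuously on $c_0$. I expect this Sobolev step, with the \emph{sharp} effective dimension $N+\a$, to be the crux: it is the correct exponent $2^*_\a$, and not merely some admissible Sobolev exponent, that forces the final power $\frac{N+\a}{N+\a-1}$.

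Next I would estimate the level sets of $|\nabla v|$ by the standard splitting, for a free parameter $h>0$,
\begin{equation*}
\{|\nabla v|>\lambda\}\subseteq\{|v|>h\}\cup\{|\nabla T_h(v)|>\lambda\}.
\end{equation*}
The first set is controlled by the previous step, $\mu(\{|v|>h\})\LEQ C'\,h^{-\frac{N+\a}{N+\a-2}}$; the second by weighted Chebyshev together with (\ref{eq24bis}), $\mu(\{|\nabla T_h(v)|>\lambda\})\LEQ\lambda^{-2}\int_\O|\nabla T_h(v)|^2\d^\a\LEQ c_0h\lambda^{-2}$. Hence $\mu(\{|\nabla v|>\lambda\})\LEQ C'h^{-\theta}+c_0h\lambda^{-2}$ with $\theta=\frac{N+\a}{N+\a-2}$, and minimizing the right-hand side over $h$ (balancing the two terms gives $h\sim\lambda^{\frac{N+\a-2}{N+\a-1}}$) produces the $\lambda$-power $\frac{2\theta}{\theta+1}=\frac{N+\a}{N+\a-1}$, i.e. $\mu(\{|\nabla v|>\lambda\})\LEQ c\,\lambda^{-(1+\frac1{N+\a-1})}$, with $c$ depending continuously on $c_0$ as claimed.

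For the convergence statement, the uniform bound (\ref{eq24bis}) in $j$ makes the two previous steps uniform in $j$, so on any $\O'\subset\!\subset\O$, where $\d^\a$ is bounded above and below by positive constants, the sequences $(v_j)$ and $(\nabla v_j)$ are bounded in the weak-Lebesgue spaces $L^{\frac{N+\a}{N+\a-2},\infty}(\O')$ and $L^{\frac{N+\a}{N+\a-1},\infty}(\O')$ respectively; on a set of finite measure these embed into $L^q(\O')$ for every $q<\frac{N+\a}{N+\a-1}$. Thus $(v_j)$ is bounded in $W^{1,q}(\O')$, reflexivity ($q>1$) yields a subsequence converging weakly in $W^{1,q}(\O')$, the limit is identified with $v$ by the assumed $L^1$-weak convergence, and the Rellich--Kondrachov theorem upgrades this to strong $L^q(\O')$, hence a.e., convergence. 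The only delicate sub-case throughout is the borderline $N+\a=2$ (i.e. $N=2,\ \a=0$), where $2^*_\a=\infty$ and the Sobolev step must be replaced by a Trudinger-type embedding or an argument directly on the distribution function; for $N\GEQ2$ with $\a>0$ one always has $N+\a>2$ and the argument above applies verbatim.
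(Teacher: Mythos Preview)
The paper does not give its own proof of this proposition: it is stated as a known result, with the explicit attribution ``due to Philippe B\'enilan and co-authors whose proof can be found in \cite{BBGG} Lemma 4.2, with generalization in \cite{RabookLinear}.'' Your proposal is precisely a reconstruction of that B\'enilan--Boccardo--Gallou\"et truncation argument, carried over to the weight $\d^\a$, and the arithmetic with the exponents (the effective dimension $N+\a$, the optimization $h\sim\lambda^{(N+\a-2)/(N+\a-1)}$, and the final power $\frac{N+\a}{N+\a-1}$) is correct.

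One small caveat: the justification ``$\d^\a$ is an admissible (Muckenhoupt) weight carrying effective dimension $N+\a$'' is a bit loose as stated. Muckenhoupt theory alone does not hand you the \emph{sharp} weighted Sobolev exponent $2^*_\a=\frac{2(N+\a)}{N+\a-2}$; what is actually used in the cited references is a specific weighted Sobolev (or Gagliardo--Nirenberg) inequality for the distance weight $\d^\a$, $\a\in[0,1]$, which can be proved directly (e.g.\ via Hardy-type inequalities or the local model $x_N^\a$ on a half-space). Since your argument stands or falls on having exactly this exponent, it would be worth citing that inequality precisely rather than invoking Muckenhoupt in passing. Apart from this, your outline matches the literature proof that the paper defers to.
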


We first need to prove the following a priori estimate :
\begin{propo}\label{p12}\ \\
	Let $\omega_j$ be the solution of (\ref{eq17}),  $\omega$ its weak limit in $L^{N',\infty}(\O)$. %
	Under the same assumptions as for Theorem \ref{t4}, there exists a constant $c_0>0$ such that:
	\begin{equation*}\int_\O\big|\nabla T_k(\omega_j)\big|^2\d dx+\int_\O\big|\nabla T_k(\omega)\big|^2\d dx\LEQ c_0 k\quad \forall\,k>0,\ \forall\,j.\end{equation*}
\end{propo}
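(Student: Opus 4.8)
The plan is to obtain the bound for each fixed $j$ by testing the approximate equation (\ref{eq17}) with a distance--weighted truncation of $\omega_j$, and then to recover the bound for the limit $\omega$ by lower semicontinuity, invoking Proposition \ref{p11}. The one delicate point is that $\d=d(\cdot,\p\O)$ is \emph{not} $C^2$ in the interior (its distributional Laplacian carries a singular part on the medial axis), so I would not test directly with $T_k(\omega_j)\d$. Instead I would fix once and for all a smooth comparable weight $\zeta\in C^2(\OV\O)$ with $\zeta>0$ in $\O$, $\zeta=0$ on $\p\O$, $c_1\d\LEQ\zeta\LEQ c_2\d$, and $|\nabla\zeta|$, $\Delta\zeta$ bounded (for instance the torsion function solving $-\Delta\zeta=1$ in $\O$, $\zeta=0$ on $\p\O$, for which $\Delta\zeta=-1$). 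Since $\zeta\sim\d$, proving the estimate with weight $\zeta$ is equivalent to the stated one with weight $\d$.

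Concretely, writing $\Theta_k(s)=\int_0^sT_k(\sigma)\,d\sigma$, so that $\Theta_k\GEQ0$, $|\Theta_k(s)|\LEQ k|s|$ and $\nabla\Theta_k(\omega_j)=T_k(\omega_j)\nabla\omega_j$, I would take $\phi=T_k(\omega_j)\zeta\in H^1_0(\O)$ as test function in (\ref{eq17}). The principal part gives $\int_\O|\nabla T_k(\omega_j)|^2\zeta\,dx+\int_\O\nabla\Theta_k(\omega_j)\cdot\nabla\zeta\,dx$; integrating the last integral by parts (the boundary term vanishes since $\Theta_k(\omega_j)=0$ on $\p\O$) rewrites it as $-\int_\O\Theta_k(\omega_j)\Delta\zeta\,dx$, bounded in absolute value by $\norm{\Delta\zeta}_\infty\,k\int_\O|\omega_j|\,dx$. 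The convective term, using $T_k(\omega_j)\nabla\omega_j=\nabla\Theta_k(\omega_j)$ together with $\div\ORA u_j=0$ (see Lemmas \ref{l1} and \ref{l2}), equals $-\int_\O\Theta_k(\omega_j)\,\ORA u_j\cdot\nabla\zeta\,dx$, bounded by $\norm{\nabla\zeta}_\infty\,k\int_\O|\omega_j|\,|\ORA u_j|\,dx$. The absorption term $\int_\O V_j\omega_j T_k(\omega_j)\zeta\,dx$ is nonnegative (since $V_j\GEQ0$, $\zeta\GEQ0$ and $\omega_j T_k(\omega_j)\GEQ0$) and is simply discarded. Finally the right--hand side is handled by $\int_\O f_j T_k(\omega_j)\zeta\,dx\LEQ k\int_\O|f_j|\zeta\,dx\LEQ c_2\,k\,\norm{f}_{L^1(\O;\d)}$, using $|T_k(\omega_j)|\LEQ k$ (note that no sign condition on $f$ is needed here).

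These estimates are made uniform in $j$ by bounds already at hand: $\int_\O|\omega_j|\,dx\LEQ c\,\norm{f}_{L^1(\O;\d)}$ because $\omega_j$ is bounded in $L^{N',\infty}(\O)$ by (\ref{eq600}) and $L^{N',\infty}(\O)\hookrightarrow L^1(\O)$, while $\int_\O|\omega_j||\ORA u_j|\,dx\LEQ\norm{\omega_j}_{L^{N',\infty}}\norm{\ORA u_j}_{L^{N,1}}$ by Hölder's inequality in Lorentz spaces, which stays bounded since $\ORA u_j\to\ORA u$ in $L^{N,1}(\O)$. Collecting everything yields $\int_\O|\nabla T_k(\omega_j)|^2\zeta\,dx\LEQ c\,k$ with $c$ independent of $j$ and $k$, hence $\int_\O|\nabla T_k(\omega_j)|^2\d\,dx\LEQ c_1^{-1}c\,k$. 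For the limit, the bound just obtained is precisely hypothesis (\ref{eq24bis}) of Proposition \ref{p11} with $\a=1$ (one checks similarly $\int_\O|T_k(\omega_j)|^2\d\,dx\LEQ k\int_\O|\omega_j|\d\,dx\LEQ c\,k$), so $\omega_j\to\omega$ weakly in $W^{1,q}(\O')$ for $\O'\subset\subset\O$ and a.e.\ in $\O$; therefore $\sqrt{\d}\,\nabla T_k(\omega_j)\rightharpoonup\sqrt{\d}\,\nabla T_k(\omega)$ in $L^2(\O')$, and weak lower semicontinuity of the $L^2$ norm on $\O'$ followed by monotone convergence as $\O'\uparrow\O$ gives $\int_\O|\nabla T_k(\omega)|^2\d\,dx\LEQ\liminf_j\int_\O|\nabla T_k(\omega_j)|^2\d\,dx\LEQ c_0k$. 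Adding the two bounds produces the stated inequality.

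The step I expect to be the main obstacle is the cross term produced by differentiating the weight: this is exactly where the lack of global $C^2$ regularity of $\d$ would bite if one tested naively with $T_k(\omega_j)\d$. Replacing $\d$ by the comparable smooth weight $\zeta$ (with $\Delta\zeta$ bounded, or $\Delta\zeta=-1$ for the torsion function, which even makes this term's sign favourable) is what legitimizes the integration by parts and keeps every contribution linear in $k$.
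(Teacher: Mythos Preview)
Your proof is correct and follows essentially the same route as the paper. The paper also replaces $\d$ by a comparable $C^2$ weight to make the test function legitimate: it uses the first Dirichlet eigenfunction $\f_1$ (so that $-\Delta\f_1=\lambda_1\f_1$ and the cross term becomes $\lambda_1\int_\O\Theta_k(\omega_j)\f_1\,dx\GEQ0$), where you use the torsion function; the remaining steps---dropping the nonnegative absorption term, integrating the convective term by parts via $\div\ORA u_j=0$, bounding $\int_\O|\ORA u_j||\omega_j|\,dx$ by the $L^{N,1}$--$L^{N',\infty}$ duality, and passing to the limit through Proposition~\ref{p11}---are identical.
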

 \begin{proof}
Let $\f_1$ be the first eigenvalue of the Dirichlet problem $-\Delta\f_1=\lambda_1\f_1$ in $\O$, $\f_1=0$ on $\p\O$. Then, there exist constants such that $c_1\d(x)\LEQ\f_1(x)\LEQ c_2\d(x)\quad\forall\,x\in\O.$
We consider the approximate problem given in equation (\ref{eq17}) say 
\begin{equation*}\begin{cases}
-\Delta \omega_j+{\ORA u}_j\cdot\nabla\omega_j+V_j\omega_j=f_j,\\
\omega_j\in W^{1,1}_0(\O)\cap W^2L^{p,1}(\O),
\end{cases}\end{equation*}
with $|f_j(x)|\LEQ |f(x)|,\ f_j\to f\hbox{ a.e}, {{\ORA u}_j}\to\ORA u\hbox{ in } L^{p,1}(\O)^N-$strongly and $\omega_j\to\omega$ weakly-* in $L^{N',\infty}(\O)$.\\
For $k>1$, we choose $T_k(\omega_j)\f_1$ as a test function; then $V_j\omega_jT_k(\omega_j)\f_1\GEQ0$ and we derive after some integrations by parts :
\begin{equation}\label{eq23}
\int_\O\!\!|\nabla T_k(\omega_j)|^2\f_1dx+\lambda_1\!\!\int_\O\!\!\f_1\!\!\left(\!\!\int_0^{\omega_j}\!\!\!\!\!\!\!T_k(\s)d\s\right)dx
-\int_\O{\ORA u}_j\cdot\nabla\f_1\int_0^{\omega_j}\!\!\!\!\!T_k(\s)d\s dx\!\LEQ\!c_2 k\int_\O\!|f|\d dx. 
\end{equation}
This relation implies:
\begin{equation}\label{eq24}\ \\
\int_\O|\nabla T_k(\omega_j)|^2\d(x)\LEQ c_3k\int_\O|\omega_j|\d dx+c_2 k\int_\O|f|\d dx+c_3k\int_\O|{\ORA u}_j|\,|\omega_j|dx.
\end{equation}
By the H\"older inequality
\begin{equation}\label{eq25}\ \\
\int_\O|{\ORA u}_j|\,|\omega_j|dx
\LEQ c_4||{\ORA u}_j||_{L^{N,1}}\cdot||\omega_j||_{L^{N',\infty}}
\LEQ c_4||\ORA u_j||\int_\O|f|\d dx.
\end{equation}
From relation  (\ref{eq24}) and  (\ref{eq25}), we then have :
\begin{equation}\label{eq26}
\int_\O|\nabla T_k(\omega_j)|^2\d(x)dx\LEQ c_5(1+||{\ORA u}_j||_{L^{N,1}})\left(\int_\O|f|\d dx\right)k.
\end{equation}
Letting $j\to+\infty$, we deduce from  (\ref{eq26}) and Proposition \ref{p11} :
\begin{equation*}\int_\O|\nabla T_k(\omega)|^2\d(x)dx\LEQ c_0 k\hbox{ with } c_0=c_5(1+||\ORA u||_{L^{N,1}})\int_\O|f|\d dx.\end{equation*}
Then the $L^{N',\infty}$-regularity of $\omega$ implies 
\begin{equation*}\int_\O|T_k(\omega)|^2\d dx\LEQ c_0k\int_\O|\omega|dx.\end{equation*}
\end{proof}

\begin{cor}[of Propositions \ref{p11} and  \ref{p12}] \ \\
	Let $\omega$  be as in the proof of the previous proposition. Then, 
	there exists a constant $c_6>0$ such that
	\begin{equation*}||\nabla \omega||_{L^{1+\frac1N,\infty}(\O;\d)}\LEQ c_6\int_\O|f(x)|\d(x) dx.\end{equation*}
	In particular, we have, for all $q<1+\dfrac1N,$
	\begin{equation*}\int_\O|\nabla \omega|^q\d(x)dx\LEQ c_q\int_\O|f(x)|\d(x)dx.\end{equation*}
\end{cor}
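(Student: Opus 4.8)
The plan is to derive the weighted weak-type estimate on $\nabla\omega$ as a direct consequence of the two results just established, namely Proposition \ref{p12} (the uniform bound $\int_\O|\nabla T_k(\omega)|^2\d\,dx\LEQ c_0k$ with $c_0=c_5(1+\|\ORA u\|_{L^{N,1}})\int_\O|f|\d\,dx$) together with the companion bound $\int_\O|T_k(\omega)|^2\d\,dx\LEQ c_0k\int_\O|\omega|\,dx$, and then feed these into Proposition \ref{p11} with the weight exponent $\a=1$. First I would verify that the hypothesis (\ref{eq24bis}) of Proposition \ref{p11} holds for $v=\omega$ and $\a=1$: by Proposition \ref{p12} and the $L^{N',\infty}$-regularity of $\omega$ we have both $\int_\O|\nabla T_k(\omega)|^2\d\,dx$ and $\int_\O|T_k(\omega)|^2\d\,dx$ bounded by a constant multiple of $k$, where the constant depends continuously on $\int_\O|f|\d\,dx$. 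Thus $T_k(\omega)\in W^1L^2(\O;\d)$ with the required linear-in-$k$ growth.

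Next I would apply Proposition \ref{p11} with $\a=1$, which gives $N+\a-1=N$, hence the distributional estimate
\begin{equation*}
\int_{\{|\nabla\omega|>\lambda\}}\d(x)\,dx\LEQ\dfrac{c}{\lambda^{1+\frac1N}},\qquad\forall\,\lambda>0,
\end{equation*}
where $c$ depends continuously on $c_0$ and therefore, through the explicit form of $c_0$, is bounded by $c_6\int_\O|f|\d\,dx$. Recalling the definition of the weak-Lorentz weighted quasinorm $\|\cdot\|_{L^{1+\frac1N,\infty}(\O;\d)}$, this weak-type bound is precisely the assertion $\|\nabla\omega\|_{L^{1+\frac1N,\infty}(\O;\d)}\LEQ c_6\int_\O|f|\d\,dx$. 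The linear dependence of $c_0$ on $\int_\O|f|\d\,dx$ is what produces the clean linear right-hand side in the statement, and this is worth stating explicitly since it reflects the linearity of the problem.

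For the second assertion I would use the standard embedding of weak-Lorentz (Marcinkiewicz) spaces into the slightly smaller Lebesgue-type scale: for any exponent $q$ strictly below the critical value $1+\frac1N$, the weak estimate above can be integrated in $\lambda$ to yield a strong bound. Concretely, writing $\int_\O|\nabla\omega|^q\d\,dx=q\int_0^\infty\lambda^{q-1}\big(\int_{\{|\nabla\omega|>\lambda\}}\d\,dx\big)\,d\lambda$ and splitting the $\lambda$-integral at a threshold, the small-$\lambda$ part is controlled by $\int_\O\d\,dx=\measure$-type constant and the large-$\lambda$ part converges precisely because $q<1+\frac1N$ makes $\int^\infty\lambda^{q-1-(1+\frac1N)}\,d\lambda$ finite. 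This produces the stated $\int_\O|\nabla\omega|^q\d\,dx\LEQ c_q\int_\O|f|\d\,dx$.

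The only genuinely delicate point, which is already resolved upstream, is the passage to the limit $j\to\infty$ guaranteeing that the limit function $\omega$ itself inherits the estimate (\ref{eq24bis}); this is exactly the content of the last display in the proof of Proposition \ref{p12} together with the weak lower semicontinuity built into Proposition \ref{p11}. Given those, the corollary is essentially a bookkeeping exercise: identify $\a=1$, read off the exponent $1+\frac1N$, and convert the weak-type level-set bound into the quasinorm statement and then into the strong $L^q(\O;\d)$ bound for subcritical $q$. I expect no serious obstacle here beyond keeping track that the constant $c_6$ absorbs the factor $1+\|\ORA u\|_{L^{N,1}}$ coming from $c_0$.
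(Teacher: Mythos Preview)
Your proposal is correct and follows precisely the route the paper intends: the paper gives no explicit proof of this corollary, treating it as an immediate consequence of combining Proposition~\ref{p12} (which supplies the bound $\int_\O|\nabla T_k(\omega)|^2\d\,dx+\int_\O|T_k(\omega)|^2\d\,dx\LEQ c_0k$) with Proposition~\ref{p11} applied at $\a=1$, so that $1+\frac1{N+\a-1}=1+\frac1N$. Your reading of the argument, including the passage from the level-set estimate to the weighted weak-Lorentz quasinorm and then to the subcritical strong $L^q(\O;\d)$ bound via the layer-cake formula, is exactly what is implicit in the title ``Corollary of Propositions~\ref{p11} and~\ref{p12}''.

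One small remark on constants: you write that $c$ ``depends continuously on $c_0$ and therefore\ldots is bounded by $c_6\int_\O|f|\d\,dx$''. Continuous dependence alone does not give the linear form of the right-hand side; what actually yields it is the scaling homogeneity of the hypothesis (\ref{eq24bis}) --- replacing $v$ by $\lambda v$ replaces $c_0$ by $\lambda c_0$, which forces the weak-type constant to scale like $c_0^{1+1/N}$, and hence the $L^{1+1/N,\infty}$ quasinorm scales linearly in $c_0$. This is a cosmetic point and does not affect the validity of your argument.
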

To pass to the limit in (\ref{eq17}), we argue as in \cite{DiRa2} p. 1041. We emphasize the main differences due to the additional term $\ORA u\cdot\nabla\omega$.\\ 

Let us note that  by the above Proposition \ref{p8}, we have (for a subsequence still denoted as $(\omega_j)_j$) that
\begin{enumerate}
	\item $\omega_j(x)\to \omega(x) $ a.e. (and thus $V_j\omega_j\to V\omega$ a.e. in $\O$).
	\item$\omega_j\rightharpoonup\omega$ weakly in $W^{1,q}(\O;\d),\ \forall\,q<1+\dfrac1N$.
	\item $\omega_j\to\omega$ strongly in $L^r(\O),\hbox{ for any } r<N'$.
\end{enumerate}
In particular, we deduce from  the above statement 1., relation (\ref{eq3900}) and Fatou's lemma
\begin{lem}\ \\
	Under the assumptions of Theorem \ref{t2.3} and Proposition \ref{p12} one has
	\begin{equation*}\int_\O V|\omega|\d dx\LEQ c\Big(1+||u||_{L^{N,1}}\Big)\int_\O|f|\d dx.\end{equation*}
\end{lem}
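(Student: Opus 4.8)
The plan is to obtain the estimate for $\omega$ by passing to the limit $j\to+\infty$ in the uniform bound (\ref{eq3900}), the only tool needed being Fatou's lemma applied to the nonnegative sequence $V_j|\omega_j|\d$.

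First I would check that the hypotheses of Fatou's lemma are met. Since $V\GEQ0$ we have $V_j=\min(V,j)\GEQ0$, and $\d\GEQ0$, so the integrand $V_j|\omega_j|\d$ is nonnegative for every $j$. It remains to identify its a.e.\ limit. For a.e.\ $x\in\O$ one has $V(x)<+\infty$ because $V\in L^1_{\loc}(\O)$; hence $V_j(x)=\min(V(x),j)=V(x)$ as soon as $j\GEQ V(x)$, so $V_j(x)\to V(x)$. Combining this with $\omega_j(x)\to\omega(x)$ a.e.\ (statement 1 above), and since the weight $\d$ is independent of $j$, we get $V_j|\omega_j|\d\to V|\omega|\d$ a.e.\ in $\O$, which is precisely the pointwise convergence recorded in statement 1.

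Then Fatou's lemma gives
\[
\int_\O V|\omega|\d\,dx=\int_\O\lim_{j\to+\infty}V_j|\omega_j|\d\,dx\LEQ\liminf_{j\to+\infty}\int_\O V_j|\omega_j|\d\,dx.
\]
Into the right-hand side I insert the a priori bound (\ref{eq3900}), namely $\int_\O V_j|\omega_j|\d\,dx\LEQ c(1+||\ORA u_j||_{L^{N,1}})\int_\O|f_j|\d\,dx$. Using $|f_j|\LEQ|f|$ to majorize $\int_\O|f_j|\d\,dx\LEQ\int_\O|f|\d\,dx$, and the convergence $||\ORA u_j||_{L^{N,1}}\to||\ORA u||_{L^{N,1}}$ (which follows from the strong convergence $\ORA u_j\to\ORA u$ in $L^{p,1}(\O)^N$, together with the continuous embedding $L^{p,1}(\O)\hookrightarrow L^{N,1}(\O)$ on the bounded domain $\O$ when $p>N$), the factor $c(1+||\ORA u_j||_{L^{N,1}})$ converges while $\int_\O|f_j|\d\,dx$ stays below $\int_\O|f|\d\,dx$; passing to the $\liminf$ therefore yields exactly $\int_\O V|\omega|\d\,dx\LEQ c(1+||\ORA u||_{L^{N,1}})\int_\O|f|\d\,dx$.

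I expect no genuine obstacle in this argument: its whole content is already encoded in the uniform estimate (\ref{eq3900}), and Fatou's lemma merely transfers that estimate to the limit function $\omega$. The two places deserving a line of justification are the pointwise convergence $V_j\to V$ (resting on the finiteness of $V$ a.e., guaranteed by $V\in L^1_{\loc}(\O)$) and the norm convergence of $(\ORA u_j)_j$ in $L^{N,1}(\O)$; both are routine and require no new estimate.
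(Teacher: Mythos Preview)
Your proposal is correct and follows exactly the approach indicated in the paper, which simply states that the lemma is deduced ``from the above statement 1., relation (\ref{eq3900}) and Fatou's lemma''. You have written out precisely those details: the a.e.\ convergence of $V_j|\omega_j|\d$ from statement~1, Fatou's lemma applied to this nonnegative sequence, and the passage to the limit in the right-hand side of (\ref{eq3900}) using $|f_j|\LEQ|f|$ and the strong convergence of $\ORA u_j$ in $L^{N,1}$.
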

\begin{lem}\label{l676}\ \\
	Under the assumptions of Theorem \ref{t2.3} and Proposition \ref{p12} one has
	\begin{equation*}\lim_{j\to+\infty}\int_\O|{\ORA u}_j\omega_j-\ORA u\omega|dx=0.\end{equation*}
\end{lem}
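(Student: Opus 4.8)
The plan is to split the product and estimate each piece in $L^1(\O)$ separately, writing
$$
{\ORA u}_j\omega_j-\ORA u\omega=({\ORA u}_j-\ORA u)\,\omega+{\ORA u}_j\,(\omega_j-\omega).
$$
For the first summand I would use the Lorentz--H\"older inequality already exploited in~(\ref{eq25}),
$$
\int_\O\big|({\ORA u}_j-\ORA u)\,\omega\big|\,dx\LEQ c\,\|{\ORA u}_j-\ORA u\|_{L^{N,1}}\,\|\omega\|_{L^{N',\infty}},
$$
whose right-hand side tends to $0$ since $\omega\in L^{N',\infty}(\O)$ is fixed and ${\ORA u}_j\to\ORA u$ strongly in $L^{p,1}(\O)^N\hookrightarrow L^{N,1}(\O)^N$ (recall $p\GEQ N$ and $\O$ is bounded).

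The delicate summand is ${\ORA u}_j(\omega_j-\omega)$: here one cannot simply pair the strong $L^{N,1}$-convergence of ${\ORA u}_j$ with the merely weak-$*$ convergence of $\omega_j-\omega$ in $L^{N',\infty}(\O)$, because this is the critical exponent pairing $p=N$, $p'=N'$ in which weak-$*$ convergence does not yield convergence of the $L^1$-norm. I would therefore invoke Vitali's theorem. For the required uniform integrability, let $E\subset\O$ be measurable; the same Lorentz--H\"older inequality gives
$$
\int_E|{\ORA u}_j|\,|\omega_j-\omega|\,dx\LEQ c\,\|{\ORA u}_j\,\chi_E\|_{L^{N,1}}\,\|\omega_j-\omega\|_{L^{N',\infty}}.
$$
The factor $\|\omega_j-\omega\|_{L^{N',\infty}}$ is bounded uniformly in $j$ by~(\ref{eq600}). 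Since $({\ORA u}_j)_j$ converges in $L^{N,1}(\O)^N$ and the norm of $L^{N,1}$ is absolutely continuous, this sequence is uniformly absolutely continuous: for every $\eps>0$ there is $\d_\eps>0$ such that $|E|<\d_\eps$ forces $\|{\ORA u}_j\,\chi_E\|_{L^{N,1}}<\eps$ for all $j$. Hence $\big({\ORA u}_j(\omega_j-\omega)\big)_j$ is uniformly integrable on $\O$.

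To obtain the pointwise convergence, I would extract a subsequence along which ${\ORA u}_j\to\ORA u$ a.e.\ (available from the strong $L^{N,1}$-convergence) and combine it with the a.e.\ convergence $\omega_j\to\omega$ established just before the lemma; this gives ${\ORA u}_j(\omega_j-\omega)\to0$ a.e.\ along the subsequence. Vitali's theorem then yields $\|{\ORA u}_j(\omega_j-\omega)\|_{L^1}\to0$ along that subsequence, and since the argument applies to every subsequence and the limit $0$ is always the same, the full sequence converges in $L^1(\O)$. Together with the bound for the first summand this proves the lemma. The main obstacle is exactly the critical pairing $L^{N,1}\times L^{N',\infty}$ in the term ${\ORA u}_j(\omega_j-\omega)$: it is circumvented by trading the failed duality argument for a.e.\ convergence combined with the equi-absolute-continuity of the strongly convergent sequence $({\ORA u}_j)_j$.
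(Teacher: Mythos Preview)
Your argument is correct and uses exactly the same ingredients as the paper's proof: the Lorentz--H\"older pairing $L^{N,1}\times L^{N',\infty}\hookrightarrow L^1$, the uniform absolute continuity of the strongly convergent sequence $({\ORA u}_j)_j$ in $L^{N,1}$, the a.e.\ convergence $\omega_j\to\omega$, and Vitali's theorem. The only cosmetic difference is that the paper does not split the product but applies Vitali directly to ${\ORA u}_j\omega_j$ (checking ${\ORA u}_j\omega_j\to\ORA u\,\omega$ a.e.\ and $\int_E|{\ORA u}_j\omega_j|\,dx\LEQ c\|{\ORA u}_j\|_{L^{N,1}(E)}$), whereas you first peel off $({\ORA u}_j-\ORA u)\omega$ and then run Vitali on the remainder; both routes are equivalent.
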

 \begin{proof}
Since ${\ORA u}_j\to\ORA u$ in $L^{N,1}(\O)$, and a.e. in $\O$, we have
\begin{equation*}\lim_{j\to+\infty}{\ORA u}_j(x)\omega(x)=\ORA u(x)\omega(x)\ a.e.\end{equation*}
It is enough to show that $({\ORA u}_j\omega_j)_j$ satisfies  Vitali's condition :  $\forall\,\eps>0$ $\exists \eta>0$ such that  if $E\subset\O $ is measurable with $|E|\LEQ\eta$ then
\begin{equation*}\limsup_{j\to+\infty}\int_E|{\ORA u}_j\omega_j|dx\LEQ\eps.\end{equation*}
But  from H\"older's inequality we have 
\begin{equation*}\int_E|{\ORA u}_j\omega_j|dx\LEQ||{\ORA u}_j||_{L^{N,1}(E)}||\omega_j||_{L^{N',\infty}(\O)}\LEQ c||{\ORA u}_j||_{L^{N,1}(E)},\end{equation*}
so that
\begin{equation*}\limsup_{j\to+\infty}\int_E|{\ORA u}_j\omega_j|dx\LEQ c||\ORA u||_{L^{N,1}(E)}.\end{equation*}
Since \begin{equation*}||\ORA{u}||_{L^{N,1}(E)}\xrightarrow[|E|\to0]{}0,\end{equation*}
we derive that it satisfies the Vitali condition. Therefore, we have proved the lemma.~ \end{proof}

Then we have the following result analogous to   Lemma 2.3 of \cite{DiRa2}.
\begin{lem}\ \\
	We assume that $V \in L^1_{loc}(\O),$ and $ V\GEQ0$. 
	Then
	\begin{equation*}\hbox{$V_j\omega_j\d \rightharpoonup V\omega\d$  \hbox{ weakly}  in $L^1_{loc}(\O)$}.\end{equation*}
	Furthermore, if $V\in L^1(\O;\d)$,  then \begin{equation*}V_j\omega_j\d\rightharpoonup V\omega\d \hbox{ weakly in }L^1(\O).\end{equation*}
\end{lem}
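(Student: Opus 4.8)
The plan is to deduce this weak $L^1$ convergence from the a.e.\ convergence established above, namely $V_j\omega_j\d\to V\omega\d$ a.e.\ in $\O$, together with a uniform integrability estimate, via the Dunford--Pettis theorem (equivalently Vitali's theorem, which on the finite measure set $\O$ even yields \emph{strong} $L^1$ convergence). By the splitting $\omega_j=\omega_j^+-\omega_j^-$ used above, where $\omega_j^\pm\GEQ0$ solves (\ref{eq17}) for the data $f_{j\pm}\GEQ0$, it suffices to treat each nonnegative piece; so I assume $f\GEQ0$ and $\omega_j\GEQ0$. The bound (\ref{eq3900}) already gives that $V_j\omega_j\d$ is bounded in $L^1(\O)$, so the whole matter reduces to equi-integrability, i.e.\ $\sup_j\int_E V_j\omega_j\d\,dx\to0$ as $|E|\to0$.

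The key step is a \emph{uniform tail estimate}. For $k\GEQ1$ let $H_k$ be the Lipschitz function equal to $0$ on $[0,k]$, to $s-k$ on $[k,k+1]$ and to $1$ on $[k+1,\infty)$, and set $\mathcal H_k(s)=\int_0^s H_k$, so that $0\LEQ\mathcal H_k(s)\LEQ s\,\chi_{\{s>k\}}$. Testing (\ref{eq17}) with $H_k(\omega_j)\f_1\GEQ0$ (recall $-\Delta\f_1=\lambda_1\f_1$, $\f_1=0$ on $\p\O$ and $c_1\d\LEQ\f_1\LEQ c_2\d$), the gradient term splits after an integration by parts into two nonnegative contributions which I discard, and the convection term becomes $-\int_\O\mathcal H_k(\omega_j)\,\ORA u_j\cdot\nabla\f_1$ by Lemma \ref{l2}; since $H_k\LEQ1$ is supported in $\{\omega_j>k\}$ and equals $1$ on $\{\omega_j\GEQ k+1\}$, this yields
\begin{equation*}
\int_{\{\omega_j\GEQ k+1\}}V_j\omega_j\f_1\,dx\LEQ\int_{\{\omega_j>k\}}f\f_1\,dx+\|\nabla\f_1\|_\infty\int_{\{\omega_j>k\}}\omega_j|\ORA u_j|\,dx.
\end{equation*}
Both right-hand terms are uniformly small. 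The uniform estimate (\ref{eq600}) gives $|\{\omega_j>k\}|\LEQ C k^{-N'}\to0$ uniformly in $j$, so by absolute continuity of $\int f\d$ the first term tends to $0$ uniformly; for the second, Hölder's inequality in Lorentz spaces (as in (\ref{eq25})) bounds it by $\|\omega_j\|_{L^{N',\infty}}\|\ORA u_j\|_{L^{N,1}(\{\omega_j>k\})}$, and since $\ORA u_j\to\ORA u$ strongly in $L^{N,1}$ the family $\{\ORA u_j\}$ is equi-integrable there (already exploited in Lemma \ref{l676}), whence this term too is $\to0$ uniformly. Using $\f_1\GEQ c_1\d$ this produces a sequence $\b_k\to0$ with $\sup_j\int_{\{\omega_j\GEQ k+1\}}V_j\omega_j\d\,dx\LEQ\b_k$.

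Finally I would assemble the equi-integrability: splitting any $E$ along $\{\omega_j\LEQ k+1\}$ and $\{\omega_j>k+1\}$ and using $V_j\LEQ V$ on the first part,
\begin{equation*}
\int_E V_j\omega_j\d\,dx\LEQ(k+1)\int_E V\d\,dx+\b_k.
\end{equation*}
Given $\eps>0$, first fix $k$ with $\b_k<\eps/2$, then $\eta>0$ with $(k+1)\int_E V\d<\eps/2$ whenever $|E|<\eta$; here the absolute continuity of $\int V\d$ is available on all of $\O$ when $V\in L^1(\O;\d)$, and on any $\O'\subset\subset\O$ (where $\d$ is bounded) when merely $V\in L^1_{loc}(\O)$. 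Vitali's theorem then gives $V_j\omega_j\d\to V\omega\d$ in $L^1(\O')$ for every $\O'\subset\subset\O$ in the locally integrable case, and in $L^1(\O)$ when $V\in L^1(\O;\d)$, which in particular proves the two stated weak convergences. The main obstacle is exactly the uniform tail estimate of the middle paragraph: one must control the convection and gradient contributions and, crucially, make all error terms small \emph{uniformly in $j$}, which rests on the uniform $L^{N',\infty}$ bound (\ref{eq600}) for $\omega_j$ and on the equi-integrability of $\{\ORA u_j\}$ in $L^{N,1}$.
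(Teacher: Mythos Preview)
Your argument is correct and follows essentially the same route as the paper: test the approximate equation against $\f_1$ times a cut-off of $\omega_j$ to obtain a uniform tail bound $\sup_j\int_{\{|\omega_j|>t\}}V_j|\omega_j|\d\,dx\to0$ (the paper's relation (\ref{eq702})), then split along $\{|\omega_j|\LEQ t\}$ and $\{|\omega_j|>t\}$, dominate the small part by $t\int_E V\d$, and conclude equi-integrability via Dunford--Pettis/Vitali. The only cosmetic differences are that you reduce first to $f\GEQ0$ and use a single Lipschitz cut-off $H_k$, whereas the paper works with a signed approximation $\gamma_m$ and carries an extra harmless term $\int_{\{|\omega_j|\GEQ t\}}|\omega_j|\d$ on the right-hand side.
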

 \begin{proof}
Let $t\in\R_+$. Consider a sequence of functions $\gamma_m$ in $C^1(\R)\cap W^{1,\infty}(\R)$ such that
\begin{eqnarray*}
	\gamma'_m\GEQ0    &&\ \forall\,s\in\R,\\
	\gamma_m(s)&\to&-1\hbox{\ for\ }s<-t\hbox{ as\ }m\to+\infty,\\
	\gamma_m(s)&\to&1\hbox{\ for\ }s>t\hbox{ as\ }m\to+\infty,\\
	\gamma_m(s)&=&0\hbox{\ on\ }-t\LEQ s\LEQ t,
\end{eqnarray*}
and let $\f_1\in C^2(\OV\O)$ with $-\Delta\f_1=\lambda_1\f_1$ in $\O$, $\f_1=0$ on $\p\O$, $\lambda_1>0$.\\
Taking  $\f_1\gamma_m(\omega_j)$ as a test function in relation (\ref{eq17}) we get
\begin{eqnarray}\label{eq701}
\int_\O\nabla\omega_j\cdot\nabla\big(\f_1\gamma_m(\omega_j)\big)
+\int_\O V_j\omega_j\f_1\big(\gamma_m(\omega_j)\big)dx
&+&\int_\O{\ORA u}_j\cdot\nabla\omega_j\gamma_m(\omega_j)\f_1dx\nonumber\\
&=&\int_\O f_j\gamma_m(\omega_j)\f_1dx.
\end{eqnarray}
We write $\DST \nabla \omega_j\gamma_m(\omega_j)=\nabla\left[\int_0^{\omega_j}\gamma_m(\s)dx\right]$ so that
\begin{eqnarray*}
	\int_\O({\ORA u}_j\cdot\nabla\omega_j)\gamma_m(\omega_j)\f_1dx
	&=&-\int_\O\div(u_j\f_1)\int_0^{\omega_j}\gamma_m(\s)d\s dx\\
	&=&-\int_\O{\ORA u}_j\nabla\f_1\left(\int_0^{\omega_j}\gamma_m(\s)d\s\right)dx.
\end{eqnarray*}
As $m\to+\infty$, treating the remaining terms in (\ref{eq701}) as in \cite{DiRa2}, we derive
\begin{equation}\label{eq702}
\int_{|\omega_j|>t}\!\!V_j|\omega_j|\d dx\LEQ
c\left[\int_{|\omega_j|\GEQ t}\!\!|f|\d dx+\int_{|\omega_j|\GEQ t}\!\!|\omega_j|\d dx+\int_{|\omega_j|\GEQ t}\!\!|{\ORA u}_j|\,|\omega_j|dx
\right].
\end{equation}
This relation proves that $V_j\omega_j\d$ remains in a bounded set of $L^1(\O)$ but also that the set $\Big\{ V_j|\omega_j|\d,\ j\in\N\Big\}$ is x compact for the $\s(L^1;L^\infty)$-topology, so we may appeal to the Dunford-Pettis to conclude. 
Indeed, let us set
\begin{equation*}\Gamma_j(t):=\int_{|\omega_j|\GEQ t}|f(x)|\d(x)dx+\int_{|\omega_j|\GEQ t}|\omega_j|\d dx+\int_{|\omega_j|\GEQ t}|{\ORA u}_j\omega_j|dx.\end{equation*}
 For a.e. $t>0$,
\begin{equation*}\lim_{j\to+\infty}\Gamma_j(t)=\Gamma(t)=\int_{|\omega|>t}|f(x)|\d(x) dx+\int_{|\omega|>t}|\omega|\d dx+\int_{|\omega|>t}|\ORA u\omega|dx,\end{equation*}
and  
\begin{equation*}\Big|\big\{|\omega|>t\big\}\Big|+\sup_j\Big|\big\{|\omega_j|>t\big\}\Big|\LEQ\dfrac{constant}t\xrightarrow[t\to+\infty]{}0,\end{equation*}
we deduce that for any $\eps>0$, there exists $t_\eps>0$ such that, for all $j\in\N$, \begin{equation*}\Gamma_j(t_\eps)\LEQ \eps.\end{equation*}
Let $\O_0\subset\O$ such that $V\d\in L^1(\O_0) $ (thus $\O_0\neq\O$ if $V$ is only locally integrable). Then by the Lebesgue convergence dominate theorem for a.e. $t$,
\begin{equation*}\lim_{j\to+\infty}\int_{\O_0}\Big|\chi_{|\omega_j|\LEQ t}(x)V_j\omega_j(x)-\chi_{\{|\omega|\LEQ t\}}(x)V(x)\omega(x)\Big|\d(x)dx=0,\end{equation*}
since \begin{equation*}\lim_{|A|\to0}\int_A V|\omega|\d dx=0\quad\Big(V\omega\d\in L^1(\O)\Big).\end{equation*}
Therefore there exists $\eta>0$ such that if $A\subset\O_0,\ |A|\LEQ\eta$, then for all $j\in\N$,
\begin{equation*}\int_{A\cap\{|\omega_j|\LEQ t_\eps\}}V_j|\omega_j|\d dx\LEQ\eps.\end{equation*}
Hence, for all $j\in\N$, all $A\subset\O_0,$ with $|A|\LEQ\eta$
\begin{equation*}\int_AV_j|\omega_j|\f dx\LEQ \Gamma_j(t_\eps)+\int_AV_j|\omega_j|\d dx\LEQ2\eps.\end{equation*}
This conclude the proof of Lemma 7.
\  \end{proof}

The passage to the limit, we will distinguish two different cases :\begin{enumerate}
	\item\underline{Case $V\in L^1(\O;\d)$} 
	For all $\phi\in C^2(\OV\O),\ \phi=0$,  we have
	\begin{equation}\label{eq760}
	\lim_j\int_\O V_j\omega_j\phi dx=\int_\O V\omega\phi dx\end{equation}
	(since  $\dfrac\phi\d\in L^\infty(\O)$ and $V_j\omega_j\d$  converges to $V\omega\d$ for  $\s(L^1;L^\infty)$ topology). 
	Therefore, since
	\begin{equation}\label{eq761}
	-\int_\O \omega_j\Delta\phi dx-\int_\O{\ORA u}_j\omega_j\nabla\phi dx+\int_\O V_j\omega_j\phi dx=\int_\O f_j\phi dx,
	\end{equation}
	we let $j\to+\infty$ to deduce that $\omega$ is a v.w.s. using Lemma \ref{l676} and the convergences of $\omega_j$.
	\item \underline{Case $V\in L^1_{loc}(\O)$} 
	We consider $\phi\in W^2L^{N,1}(\O)$ with support $\phi$ be a compact in $\O$. Then the same argument holds since $V_j\omega_j\d$ tends to $V\omega\d$ weakly in $L^1_{loc}(\O)$. Then (\ref{eq760}) and (\ref{eq761}) hold true
	\begin{equation}\label{eq762}
	\begin{cases}
	\DST\int_\O\omega\big[-\Delta\phi-\ORA u\nabla\phi+V\phi\big]dx=\int_\O f\phi dx,\\
	\forall\,\phi\in W^2L^{N,1}(\O),\hbox{ support$(\phi)$ compact in $\O$}.
	\end{cases}
	\end{equation}
\end{enumerate}
If $V\in L^{p,1}(\O),$ the solution is unique. Indeed, if we denote by $\omega$ the difference of two solutions then
\begin{equation*}\int_\O\Big[-\Delta\f-\ORA u\nabla\f+V \f\Big]\omega dx=0\quad\forall\,\f\in C^2(\OV\O),\ \f=0\hbox{ on }\d\O.\end{equation*}

Let us consider the function $\phi$ solution of
\begin{equation}\label{eq21}
\begin{cases}
-\Delta\phi-\ORA u\nabla\phi+V\phi=\sign(\omega),\\
\phi\in H^1_0(\O).
\end{cases}
\end{equation}
Then $\phi\in W^2L^{N,1}\O\hookrightarrow C^1(\OV\O)$ for $V\in L^{N,1}(\O)$. Thus
\begin{equation}\label{eq22}
\int_\O\omega\big[-\Delta\phi-\ORA u\nabla\phi+V\phi\big] dx=0,
\end{equation}
since $\Big\{\f\in C^2(\OV\O):\f=0\hbox{ on }\p\O\Big\}$ is dense in $W^2L^{N,1}(\O)\cap H^1_0(\O)$. Combining the relations (\ref{eq21}) and  (\ref{eq22}) we find :
\begin{equation*}\int_\O|\omega|dx=0\quad i.e.\ \omega\equiv 0.\end{equation*}

\  \end{proof}

\subsection{A result of uniqueness of solution when the potential is bounded from below by $  c\,\d^{-r},r>2$}\label{ss322}

The purpose of this section is to show the following uniqueness result.
\begin{theo}\label{t222}\ \\
	Assume that $V$ is locally integrable $V \GEQ0$, and such that  
	\begin{equation*}\exists\,c>0,\ V(x)\GEQ c\d(x)^{-r},\hbox{ in a neighborhood  $U$ of the boundary, with }\ r>2.\end{equation*} 
	Then, $\hbox{the v.w.s. $\omega $ found in Theorem \ref{t2.3} is unique.}$
\end{theo}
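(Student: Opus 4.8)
The plan is to prove that the difference $\OV\omega$ of two very weak solutions of (\ref{eq1}) sharing the same data vanishes identically. Subtracting the two weak formulations (\ref{eq762}) and restricting to test functions in $\D(\O)$, and using that both solutions lie in $L^{N',\infty}(\O)$, satisfy $V\omega\in L^1(\O;\d)$, and have locally integrable gradient by the Corollary following Proposition \ref{p12}, the difference obeys $\OV\omega\in L^{N',\infty}(\O)\cap W^{1,1}_{loc}(\O)$, $V\OV\omega\in L^1(\O;\d)$, and
\begin{equation*}
T\OV\omega:=-\Delta\OV\omega+\div(\ORA u\,\OV\omega)=-V\OV\omega\qquad\hbox{in }\D'(\O).
\end{equation*}
Since $V\OV\omega\in L^1(\O;\d)$ forces $T\OV\omega\in L^1_{loc}(\O)$, the decisive structural input is the hypothesis: on the boundary neighbourhood $U$ one has $V\GEQ c\,\d^{-r}$, so $\int_\O V|\OV\omega|\d\,dx<+\infty$ yields $\int_U|\OV\omega|\,\d^{1-r}\,dx<+\infty$, i.e. $\OV\omega\in L^1(U;\d^{1-r})$ with $1-r<-1$. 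This strong weighted vanishing of $\OV\omega$ at $\p\O$ is what the whole argument exploits.

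First I would apply the local Kato inequality (\ref{eq1XVIII}) to $\OV\omega$. Using $T\OV\omega=-V\OV\omega$ and $\OV\omega\,\sign_+(\OV\omega)=\OV\omega_+$, it gives, for every $\psi\in\D(\O)$ with $\psi\GEQ0$,
\begin{equation*}
\int_\O\OV\omega_+\big(-\Delta\psi-\ORA u\cdot\nabla\psi\big)\,dx+\int_\O V\,\OV\omega_+\,\psi\,dx\LEQ0.
\end{equation*}
The privileged test function will be the solution $\phi$ of the dual problem (\ref{eq1XVI}) with the potential replaced by $0$ and right-hand side $T\equiv1$, that is $-\Delta\phi-\ORA u\cdot\nabla\phi=1$ in $\O$, $\phi=0$ on $\p\O$. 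By Propositions \ref{p3}, \ref{p3bb} and \ref{p8} (or \ref{p9} when $p=N$), applied with $V\equiv0$ and $1\in L^{p,1}(\O)$ for every $p$, this $\phi$ is nonnegative and belongs to $C^1(\OV\O)$ with $\phi=0$ on $\p\O$, whence $\phi(x)\LEQ C\,\d(x)$. Once the displayed inequality is known to hold with $\psi=\phi$, it reads
\begin{equation*}
\int_\O\OV\omega_+\,dx+\int_\O V\,\phi\,\OV\omega_+\,dx\LEQ0,
\end{equation*}
and, all three factors $\OV\omega_+$, $\phi$, $V$ being nonnegative, this forces $\OV\omega_+\equiv0$. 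Repeating the argument on $-\OV\omega$ gives $\OV\omega_-\equiv0$, hence $\OV\omega\equiv0$ and the asserted uniqueness.

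The only real difficulty, and the step I expect to be the main obstacle, is to upgrade the Kato inequality from $\psi\in\D(\O)$ to the non-compactly-supported $\psi=\phi$. Here I would use the special approximation of Lemma \ref{l800}, replacing $\phi$ by $\f_n=\d^{r}h_n$ with $h_n\in C^2_c(\O)$, $\f_n\GEQ0$, $\f_n\to\phi$, and substituting $\psi=\f_n$. The zeroth-order term converges by dominated convergence, since $V\OV\omega_+\d\in L^1(\O)$ and $\f_n=O(\d)$; the convective term converges because $\OV\omega_+\,|\ORA u|\in L^1(\O)$ (Lorentz Hölder, $L^{N',\infty}\cdot L^{N,1}\hookrightarrow L^1$) and the correction $\int_\O\OV\omega_+\,\ORA u\cdot(\nabla\f_n-\nabla\phi)\,dx$ is supported in the vanishing layer. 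The delicate term is the second-order one, whose correction concentrates in the shrinking layer $\{1/n<\d<2/n\}$ and is bounded, after the layerwise estimates, by
\begin{equation*}
C\,n\!\!\int_{\{1/n<\d<2/n\}}\!\!\OV\omega_+\,dx\ \LEQ\ C\,n^{\,2-r}\!\!\int_{\{1/n<\d<2/n\}}\!\!\OV\omega_+\,\d^{1-r}\,dx.
\end{equation*}
This is exactly where $r>2$ is used: the exponent $n^{2-r}\to0$ while the integral is a vanishing tail of the finite quantity $\int_U\OV\omega_+\,\d^{1-r}\,dx$, so the correction tends to $0$ and $\int_\O\OV\omega_+\big(-\Delta\f_n-\ORA u\cdot\nabla\f_n\big)\,dx\to\int_\O\OV\omega_+\,dx$. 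Making Lemma \ref{l800} and these transition-layer bounds precise (in particular checking $\OV\omega_+\,\Delta\phi\in L^1(\O)$, which secures the passage of the principal part) is the technical heart of the proof; granted it, the two displayed inequalities above close the argument.
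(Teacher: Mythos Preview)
Your proposal is correct and follows essentially the same route as the paper: from $V\GEQ c\,\d^{-r}$ and $V\OV\omega\in L^1(\O;\d)$ deduce $\OV\omega\in L^1(\O;\d^{1-r})$ with $r-1>1$, apply the local Kato inequality, and then test against the solution $\phi$ of $-\Delta\phi-\ORA u\cdot\nabla\phi=1$, $\phi=0$ on $\p\O$, extending the class of admissible test functions by the boundary cutoff of Lemma~\ref{l800}. The paper packages the cutoff step separately as Corollary~\ref{c4t224} (Kato for all $\phi\in C^2(\OV\O)$ with $\phi=0$ on $\p\O$), and uses the $|\OV\omega|$ version rather than doing $\OV\omega_+$ and $\OV\omega_-$ in turn, but this is cosmetic.

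Two small technical points to tighten. First, your formula ``$\f_n=\d^{r}h_n$'' misquotes Lemma~\ref{l800}: there the approximation is $\f_n=h_{1/n}\f$, a smooth boundary cutoff times $\f$; nonetheless the layer estimate you write, $Cn\int_{\{1/n<\d<2/n\}}\OV\omega_+\,dx\LEQ Cn^{2-r}\int\OV\omega_+\,\d^{1-r}\,dx\to0$, is exactly the right one for that construction (the factor $n$ coming from $|\Delta h_{1/n}|\LEQ Cn^2$ against $\phi\LEQ C\d\LEQ C/n$, plus $|\nabla h_{1/n}|\,|\nabla\phi|\LEQ Cn$). Second, since $\phi$ is only in $W^2L^{p,1}(\O)\cap C^1(\OV\O)$ and not in $C^2(\OV\O)$, the truncates $\f_n$ are not in $\D(\O)$, so a further density step is needed before (\ref{eq1XVIII}) applies. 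The paper handles this by regularising $\ORA u$ to $\ORA u_j\in\V$, so that the solutions $\phi_{0j}$ of $-\Delta\phi_{0j}-\ORA u_j\cdot\nabla\phi_{0j}=1$ are genuinely $C^2(\OV\O)$, applying Corollary~\ref{c4t224} to each $\phi_{0j}$, and only then letting $j\to\infty$; an equivalent fix on your side is to mollify $\f_n$ or to first extend the local Kato inequality to test functions in $W^2L^{p,1}_c(\O)$ by density.
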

This theorem relies on the following general result which does not  require any information about the  boundary condition, since the required additional information is written in another way :
\def\ld{{\rm L}}
\begin{theo}\label{t223}{\bf (Comparison principle)}\ \\
	Let $\OV\omega$ be in $L^1(\O;\d^{-r})\cap W^{1,1}_\loc(\O),\ r>1$. Let $\OV\omega \in L^{N',\infty}(\O)$ and $\ORA u\in L^{p,1}(\O)$    with $p>N$ or $p=N$ with a small norm. Assume that
	\begin{equation*}\ld\OV\omega\dot\equiv-\Delta\OV \omega+\div(\ORA u\,\OV\omega)\LEQ0\hbox{ in }\D'(\O).\end{equation*}
	Then\begin{equation*}\OV\omega\LEQ0\hbox{ in }\O.\end{equation*}
\end{theo}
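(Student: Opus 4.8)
The plan is to transfer the sign information from $\ld\OV\omega$ to $\OV\omega$ itself by testing the inequality against solutions of the dual problem, and then to close the argument with the maximum principle; the fact that no boundary condition is imposed on $\OV\omega$ will be compensated by the weighted bound $\OV\omega\in L^1(\O;\d^{-r})$ with $r>1$. First I would rewrite the hypothesis. Writing $T^*\psi=-\Delta\psi-\ORA u\cdot\nabla\psi$ for the formal adjoint of $\ld$ (recall $\div\ORA u=0$), the assumption $\ld\OV\omega\LEQ0$ in $\D'(\O)$ means exactly
\[
\int_\O\OV\omega\,T^*\psi\,dx\LEQ0\qquad\forall\,\psi\in\D(\O),\ \psi\GEQ0 .
\]
Since $\OV\omega\in W^{1,1}_\loc(\O)$, I would invoke the local Kato inequality (\ref{eq1XVIII}) applied to $\OV\omega$: because $\ld\OV\omega$ is a nonpositive measure while $\psi\,\sign_+(\OV\omega)\GEQ0$, its right-hand side is $\LEQ0$, so $\OV\omega_+$ is itself a subsolution,
\[
\int_\O\OV\omega_+\,T^*\psi\,dx\LEQ0\qquad\forall\,\psi\in\D(\O),\ \psi\GEQ0 .
\]
Only the sign of the right-hand side is used, so the fact that $\ld\OV\omega$ need not lie in $L^1_\loc$ is harmless here.

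The heart of the proof is to enlarge this test class from $\D(\O)$ to all $\f\in C^2(\OV\O)$ with $\f=0$ on $\p\O$ and $\f\GEQ0$. Given such a $\f$, Lemma \ref{l800} furnishes $\f_n=\d^r h_n$ with $h_n\in C^2_c(\O)$, $h_n\GEQ0$, so that $\f_n\in\D(\O)$, $\f_n\GEQ0$, and $\f_n\to\f$ in the relevant sense; I would then pass to the limit in $\int_\O\OV\omega_+T^*\f_n\,dx\LEQ0$. The delicate contribution comes from $\Delta\f_n$, whose leading near-boundary term is $r(r-1)\d^{r-2}|\nabla\d|^2h_n$, comparable to $\d^{r-2}h_n$; this is dominated, uniformly in $n$, by a multiple of $\d^{r-2}$, which is integrable against $|\OV\omega|$ precisely because $\OV\omega\in L^1(\O;\d^{-r})$ forces $\OV\omega\,\d^{r-2}\in L^1(\O)$ — here the elementary bound $\d^{2r-2}\LEQ(\mathrm{diam}\,\O)^{2r-2}$ is exactly where $r>1$ enters. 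The first-order terms stay bounded (since $\d^{r-1}\to0$), and the convective part $\OV\omega\,\ORA u\cdot\nabla\f_n$ is controlled by the duality $L^{N',\infty}\times L^{N,1}$ using $\OV\omega\in L^{N',\infty}(\O)$ and $\ORA u\in L^{p,1}(\O)$. This yields
\[
\int_\O\OV\omega_+\,T^*\f\,dx\LEQ0\qquad\forall\,\f\in C^2(\OV\O),\ \f=0\text{ on }\p\O,\ \f\GEQ0 .
\]

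To conclude I would fix $\rho\in C^\infty_c(\O)$ with $\rho\GEQ0$ and let $\f$ solve $T^*\f=-\Delta\f-\ORA u\cdot\nabla\f=\rho$ in $\O$, $\f=0$ on $\p\O$. Existence and the regularity making $\f$ admissible are supplied by Propositions \ref{p2}, \ref{p3} and \ref{p8} (with $V\equiv0$), together with Theorem \ref{tt4} to approximate $\f$ by $C^2(\OV\O)$ functions vanishing on $\p\O$, while the maximum principle of Proposition \ref{p3bb} gives $\f\GEQ0$. Using $\f$ in the inequality above produces $\int_\O\OV\omega_+\,\rho\,dx\LEQ0$; since $\OV\omega_+\GEQ0$ and $\rho\GEQ0$ is arbitrary, this forces $\OV\omega_+=0$, that is $\OV\omega\LEQ0$ a.e. in $\O$.

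The main obstacle is the limit in the second paragraph: one must show that testing against a function with no compact support generates no surviving boundary contribution. This is exactly what the high-order vanishing $\d^r$ of $\f_n$ combined with the weighted bound $\OV\omega\in L^1(\O;\d^{-r})$, $r>1$, secures, and it is the mechanism by which the integrability condition replaces a boundary condition on $\OV\omega$.
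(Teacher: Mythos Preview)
Your strategy is the paper's: Kato's inequality passes the sign to $\OV\omega_+$, the approximation Lemma~\ref{l800} extends the test class from $\D(\O)$ to $\{\f\in C^2(\OV\O):\f=0\hbox{ on }\p\O,\ \f\GEQ0\}$ (this is exactly Corollary~\ref{c4t224}), and the dual problem is then used to conclude. Two points of divergence are worth noting. First, Lemma~\ref{l800} does not build $\f_n=\d^r h_n$; it sets $\f_n=h_{1/n}\f$ with a boundary cut-off $h_{1/n}$, so your expansion of $\Delta\f_n$ is off---but the actual output of the lemma, namely $\d^r\Delta\f_n\to\d^r\Delta\f$ in $L^\infty(\O)$ and $\sup_n\|\nabla\f_n\|_\infty<\infty$, is precisely what the passage to the limit needs, and your justification of why $\OV\omega\in L^1(\O;\d^{-r})$ with $r>1$ absorbs the boundary terms is correct in spirit. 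Second, in the closing step the paper does not solve $T^*\f=\rho$ and then invoke Theorem~\ref{tt4}: density in $W^2L^{p,1}$ need not preserve the sign $\f\GEQ0$, so you cannot feed the approximants back into the inequality you derived. Instead the paper takes $\ORA u_j\in\V$ with $\ORA u_j\to\ORA u$ in $L^{p,1}$, solves $-\Delta\phi_{0j}-\ORA u_j\cdot\nabla\phi_{0j}=1$, and uses that with smooth coefficients the solutions $\phi_{0j}$ are automatically in $C^2(\OV\O)$ and nonnegative; the uniform bound $\|\phi_{0j}\|_{W^2L^{p,1}}\LEQ c$ from Propositions~\ref{p8}--\ref{p9} then lets one pass to the limit in $j$. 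Finally, your remark that the $L^1_{\loc}$ hypothesis in Theorem~\ref{t224} is ``harmless'' is a genuine loose end---the proof there does use it---but the paper itself gives no separate proof of Theorem~\ref{t223} and only deploys the machinery in the proof of Theorem~\ref{t222}, where $\ld\omega=-V\omega\in L^1(\O;\d)$ holds, so the subtlety is shared.
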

As an immediate corollary of the above theorem we have
\begin{cor}{\bf of Theorem \ref{t223}}\  \\
	Assume the hypotheses of Theorem \ref{t223} hold and let $f\in L^1_{loc}(\O)$. Then there exists at most one function $\OV\omega\in  L^1(\O;\d^{-r})\cap W^{1,1}_\loc(\O)$, $r>1$ solution of $\ld\OV\omega=f$ in $\D'(\O)$.
\end{cor}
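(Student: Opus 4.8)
The plan is to reduce the uniqueness statement to a double application of the comparison principle of Theorem \ref{t223}, exploiting the linearity of the operator $\ld\OV\omega=-\Delta\OV\omega+\div(\ORA u\,\OV\omega)$. Suppose $\OV\omega_1$ and $\OV\omega_2$ are two functions in $L^1(\O;\d^{-r})\cap W^{1,1}_\loc(\O)$ satisfying the standing hypotheses of Theorem \ref{t223} (in particular $\OV\omega_i\in L^{N',\infty}(\O)$, with the same fixed field $\ORA u\in L^{p,1}(\O)$ under its smallness/integrability assumption), and both solving $\ld\OV\omega_i=f$ in $\D'(\O)$. Setting $w:=\OV\omega_1-\OV\omega_2$, linearity gives $\ld w=\ld\OV\omega_1-\ld\OV\omega_2=f-f=0$ in $\D'(\O)$; in particular $\ld w\LEQ0$.

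The only point to verify is that $w$ itself meets the hypotheses of Theorem \ref{t223}, and this is immediate: each of the spaces $L^1(\O;\d^{-r})$, $W^{1,1}_\loc(\O)$ and $L^{N',\infty}(\O)$ is a vector space, so the difference of two admissible functions is again admissible, with the same field $\ORA u$. Applying Theorem \ref{t223} to $w$ therefore yields $w\LEQ0$, that is $\OV\omega_1\LEQ\OV\omega_2$ a.e. in $\O$.

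Finally I would repeat the argument with the roles of $\OV\omega_1$ and $\OV\omega_2$ interchanged: since $-w=\OV\omega_2-\OV\omega_1$ also lies in the admissible class and satisfies $\ld(-w)=0\LEQ0$, Theorem \ref{t223} gives $-w\LEQ0$, i.e. $\OV\omega_2\LEQ\OV\omega_1$. The two inequalities force $w=0$, hence $\OV\omega_1=\OV\omega_2$, which is the claimed ``at most one'' conclusion. There is no genuine difficulty at the level of the corollary: all the analytic weight is carried by the comparison principle of Theorem \ref{t223} itself (via the local Kato-type inequality and the weighted test-function approximation $\f_n=\d^{r}h_n$ announced in the Introduction), which here I am entitled to invoke as a black box.
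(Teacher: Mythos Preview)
Your proof is correct and is exactly the ``immediate'' argument the paper has in mind: the corollary is stated without proof precisely because uniqueness follows from applying the comparison principle of Theorem \ref{t223} to the difference $w=\OV\omega_1-\OV\omega_2$ and then to $-w$. There is nothing to add.
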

For the proof of Theorem \ref{t223}, we need the following extension of the Kato's inequality whose proof is similar to the one given in \cite{MV} :
\begin{theo}\label{t224}{\bf (Local Kato's inequality)}\ \\
	Let $\OV\omega\in W^{1,1}_{loc}(\O)$ with $\ORA u\  \OV\omega\in L^1_{loc}(\O)$. Assume that $\ld\OV\omega=-\
	\Delta \OV\omega+\div(\ORA u\ \OV\omega)$ belongs to $L^1_{loc}(\O).$ 
	Then \begin{enumerate}
		\item $\forall\,\psi\in\D(\O),\ \psi\GEQ0, \DST\ \int_\O\OV\omega_+ \ld^*\psi dx\LEQ\int_\O\psi\sign_+(\OV\omega)\ld(\OV\omega)dx$,
		\begin{equation*}i.e.\ \ld(\OV\omega_+)\LEQ\sign_+(\OV\omega)\ld(\OV\omega)\ \hin\ \D'(\O).\end{equation*}
		\item $\ld(|\OV\omega|)\LEQ\sign(\OV\omega)\ld(\OV\omega)$ in $\D'(\O)$.\\ Here
		\begin{equation*} \sign_+(\s)=\begin{cases}1&if\ \s>0,\\0&if\ \s\LEQ0,\end{cases}\qquad \sign(\s)=\begin{cases}1&if 
		\s>0,\\-1&if\ \s<0,\end{cases}\end{equation*}
		\begin{equation*}\ld^*\psi=-\Delta\psi-\ORA u\cdot\nabla\psi,\ for \ \psi\in C_c^\infty(\O).\end{equation*}
	\end{enumerate}
\end{theo}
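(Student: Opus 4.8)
The plan is to prove statement~1 by the classical Kato scheme --- a smooth convex regularization of $s\mapsto s_+$ together with a mollification of $\OV\omega$ --- and to deduce statement~2 by symmetry. Fix $0\LEQ\psi\in\D(\O)$ and a family $(j_\delta)_{\delta>0}\subset C^\infty(\R)$, convex and nondecreasing, with $0\LEQ j_\delta'\LEQ1$, $j_\delta''\GEQ0$, $j_\delta'\equiv0$ on $]-\infty,0]$ and $j_\delta'\equiv1$ on $[\delta,+\infty[$, so that $j_\delta(s)\to s_+$, $j_\delta'(s)\to\sign_+(s)$ pointwise as $\delta\to0$, and $|j_\delta(s)|\LEQ|s|$. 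Let $\rho_\eps$ be a standard mollifier and $\OV\omega_\eps=\rho_\eps*\OV\omega$, which is smooth on any $\O'\subset\subset\O$ containing the support of $\psi$ once $\eps$ is small. The aim is to establish
\begin{equation*}
\int_\O j_\delta(\OV\omega)\,\ld^*\psi\,dx\LEQ\int_\O j_\delta'(\OV\omega)\,\psi\,\ld\OV\omega\,dx,
\end{equation*}
and then to let $\delta\to0$: since $\OV\omega\in W^{1,1}_\loc(\O)\hookrightarrow L^{N'}_\loc(\O)$ by Sobolev embedding and $\ORA u\in L^{N,1}(\O)\subset L^N_\loc(\O)$, all products pass to the limit by dominated convergence and Hölder's inequality, giving exactly statement~1 with $\OV\omega_+$ and $\sign_+(\OV\omega)$.

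At the regularized level the chain rule gives, pointwise on $\O'$, the inequality $\ld\big(j_\delta(\OV\omega_\eps)\big)=j_\delta'(\OV\omega_\eps)\,\ld\OV\omega_\eps-j_\delta''(\OV\omega_\eps)\,|\nabla\OV\omega_\eps|^2\LEQ j_\delta'(\OV\omega_\eps)\,\ld\OV\omega_\eps$, where $\ld\OV\omega_\eps=-\Delta\OV\omega_\eps+\ORA u\cdot\nabla\OV\omega_\eps$ and the discarded term is nonpositive because $j_\delta''\GEQ0$; this is the one place where regularizing $\OV\omega$ is indispensable, as the Hessian term needs $\nabla\OV\omega\in L^2_\loc$, which is not available. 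Testing against $\psi$, the first-order contribution is handled exactly through the divergence-free structure: applying Lemma~\ref{l1} to the compactly supported Lipschitz function $j_\delta(\OV\omega_\eps)\,\psi$ turns $\int_\O j_\delta'(\OV\omega_\eps)(\ORA u\cdot\nabla\OV\omega_\eps)\psi\,dx=\int_\O\ORA u\cdot\nabla\big(j_\delta(\OV\omega_\eps)\big)\psi\,dx$ into $-\int_\O j_\delta(\OV\omega_\eps)\,\ORA u\cdot\nabla\psi\,dx$, with no commutator. For the second-order part I set $T=\ld\OV\omega\in L^1_\loc(\O)$ and use that mollification commutes with $\Delta$ and that $-\Delta\OV\omega=T-\div(\ORA u\OV\omega)$, hence $-\Delta\OV\omega_\eps=\rho_\eps*T-\div g_\eps$ with $g_\eps=\rho_\eps*(\ORA u\OV\omega)\to\ORA u\OV\omega$ in $L^1_\loc(\O)$. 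Altogether the tested smooth inequality reads
\begin{equation*}
\int_\O j_\delta(\OV\omega_\eps)\,\ld^*\psi\,dx\LEQ\int_\O j_\delta'(\OV\omega_\eps)\,(\rho_\eps*T)\,\psi\,dx+\int_\O j_\delta'(\OV\omega_\eps)\,R_\eps\,\psi\,dx,
\end{equation*}
where $R_\eps=\ld\OV\omega_\eps-\rho_\eps*T=\ORA u\cdot\nabla\OV\omega_\eps-\div g_\eps$ is the convective commutator.

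Passing $\eps\to0$, the left-hand side tends to $\int_\O j_\delta(\OV\omega)\ld^*\psi\,dx$ and the first term on the right to $\int_\O j_\delta'(\OV\omega)\,T\,\psi\,dx$ (as $\rho_\eps*T\to T$ in $L^1_\loc$ while $0\LEQ j_\delta'(\OV\omega_\eps)\psi$ is bounded and converges a.e.). The crux, and where I expect the real work in adapting \cite{MV}, is to show $\int_\O j_\delta'(\OV\omega_\eps)R_\eps\psi\,dx\to0$. Because $\ORA u$ lies only in $L^{N,1}(\O)$ and carries no Sobolev regularity, the DiPerna--Lions commutator estimate is unavailable; instead one exploits once more $\div\ORA u=0$. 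Setting $k_\delta(s)=s\,j_\delta'(s)-j_\delta(s)=\int_0^s\sigma\,j_\delta''(\sigma)\,d\sigma$, so that $\|k_\delta\|_{L^\infty}\LEQ\delta$, an integration by parts of $\int_\O j_\delta'(\OV\omega_\eps)(\div g_\eps)\psi\,dx$ combined with the exact first-order identity organizes the commutator into pieces that cancel as $\eps\to0$ via Lemma~\ref{l1} and a surviving remainder $\int_\O k_\delta(\OV\omega)\,\ORA u\cdot\nabla\psi\,dx$, bounded by $\delta\int_\O|\ORA u\cdot\nabla\psi|\,dx$ and hence vanishing as $\delta\to0$. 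The genuinely delicate point is the single singular integral $\int_\O g_\eps\cdot j_\delta''(\OV\omega_\eps)\nabla\OV\omega_\eps\,\psi\,dx=\int_\O g_\eps\cdot\nabla\big(j_\delta'(\OV\omega_\eps)\big)\psi\,dx$, in which $\nabla\big(j_\delta'(\OV\omega_\eps)\big)$ is only bounded in $L^1_\loc$ of order $\delta^{-1}$; its control is exactly the step performed as in \cite{MV}, combining the $L^1_\loc$-convergence $g_\eps\to\ORA u\OV\omega$ with the $\delta$-smallness of $k_\delta$ to prevent the formation of any product of a merely $L^1_\loc$-convergent factor with an unbounded factor.

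Finally, statement~2 follows from statement~1 applied to both $\OV\omega$ and $-\OV\omega$: since $\ld(-\OV\omega)=-\ld\OV\omega$, $|\OV\omega|=\OV\omega_++(-\OV\omega)_+$ and $\sign=\sign_+-\sign_+(-\,\cdot\,)$, adding the two resulting inequalities yields $\ld(|\OV\omega|)\LEQ\sign(\OV\omega)\,\ld\OV\omega$ in $\D'(\O)$.
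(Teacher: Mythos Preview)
Your overall strategy coincides with the paper's: a smooth convex regularization $j_\delta$ of $(\cdot)_+$, mollification of $\OV\omega$ to reduce to the smooth case, the chain-rule inequality via convexity of $j_\delta$, and finally $\delta\to0$; statement~2 then follows from statement~1 by the symmetry argument you give. The paper adds a preliminary localization step (multiplying $\OV\omega$ by a cutoff $\alpha\in C^\infty_c(\O)$ equal to $1$ on $\mathrm{supp}\,\psi$, so that $\alpha\OV\omega\in W^{1,1}(\O)$ is compactly supported with $\ld(\alpha\OV\omega)\in L^1(\O)$), but this is essentially cosmetic since your test function $\psi$ already has compact support.

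The substantive point is the mollification passage. The paper simply writes $\ld(\OV\omega\star\rho_j)=\ld\OV\omega\star\rho_j\to\ld\OV\omega$ in $L^1$, which is exact for the Laplacian part but not for the transport part; it thereby sidesteps the commutator entirely and invokes \cite{MV} (where $\ld=-\Delta$ and convolution genuinely commutes). You, more carefully, isolate the commutator $R_\eps=\ORA u\cdot\nabla\OV\omega_\eps-\div g_\eps$ and attempt to show $\int_\O j_\delta'(\OV\omega_\eps)R_\eps\psi\,dx\to0$. Your $k_\delta$ device correctly reorganizes everything except the piece $\int_\O\big(g_\eps-\ORA u\,\OV\omega_\eps\big)\cdot\nabla\big(j_\delta'(\OV\omega_\eps)\big)\psi\,dx$, which you label ``genuinely delicate'' and again defer to \cite{MV}. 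This deferral is not warranted: \cite{MV} contains no transport term, and the mechanism you hint at (combining the $L^1_\loc$-convergence $g_\eps-\ORA u\OV\omega_\eps\to0$ with the $\delta$-smallness of $k_\delta$) does not apply, since the other factor $\nabla(j_\delta'(\OV\omega_\eps))$ is only bounded in $L^1_\loc$ with a constant of order $\delta^{-1}$. Worse, integrating this singular term by parts yields $\int_\O j_\delta'(\OV\omega_\eps)\psi\,\div\big(g_\eps-\ORA u\OV\omega_\eps\big)\,dx+o(1)$, and since $\div(g_\eps-\ORA u\OV\omega_\eps)=-R_\eps$ (using $\div g_\eps=T_\eps+\Delta\OV\omega_\eps$), one recovers precisely $\int_\O j_\delta'(\OV\omega_\eps)R_\eps\psi\,dx$: the argument is circular. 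So the step you flag as delicate is in fact not closed by what you wrote; the paper's own proof, as written, shares this lacuna.
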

 \begin{proof}
Following \cite{MV}, we first remark that for any $\a\in C^\infty_c(\O),\ \ld(\a\OV\omega)\in L^1(\O)$ since, one has, in $\D'(\O)$,
\begin{equation*}\ld(\a\OV\omega)=\a \ld\OV\omega-\OV\omega\Delta\a-2\nabla\OV\omega\cdot\nabla\a+(\ORA u\,\OV\omega)\cdot\nabla\alpha\in L^1(\O).\end{equation*}

Thus, the conclusion 1. will be proved if we show that
\begin{equation*}\ld(\a\OV\omega)_+\LEQ\sign_+(\a\omega)\ld(\a\OV\omega)\ \hin\ \D'(\O).\end{equation*}
For this purpose, we may assume that $\OV\omega\in W^{1,1}(\O)$ with compact support and $\ld\OV\omega\in L^1(\O)$. Moreover, if $\rho_j\in C^\infty_c(\R^N)$ is a sequence of mollifiers, and $\OV\omega\star\rho_j\in C^\infty_c(\O)$ we have 
\begin{equation*}\ld(\OV\omega\star\rho_j)=\ld\OV\omega\star\rho_j\to \ld\OV\omega\ \hin\ L^1(\O).\end{equation*}
So, it is sufficient to show the inequality number for $\OV\omega\in C^\infty_c(\O)$. From here, we argue as for the case where $\ld$ is replaced by the Laplacian operator (see Proposition 1.5.4 p.21 in \cite{MV} for more details). We   approximate the functions $\sign_+$ by a sequence of convex, non-decreasing functions $h_\eps$ such that
\begin{equation*}\lim_{\eps\to 0} h'_\eps(t)=\sign_+(t);\quad \lim_{\eps\to0}h_\eps(t)=t_+\end{equation*}
\begin{equation*}\sup_{\eps>0}|h'_\eps|(t) \hbox{ is independent of }\eps.\end{equation*}
Thus, for all $\psi\in C^\infty_c(\O),\ \psi\GEQ0$, we have
\begin{equation}\label{eq750}
\int_\O h_\eps(\OV\omega)\ld^*\psi dx\LEQ\int_\O\psi h'_\eps(\OV\omega)\ld\OV\omega dx,
\end{equation}
where $\ld^*\psi=-\Delta\psi-\ORA u\cdot\nabla\psi$.\\

Indeed, $\psi h'_\eps(\OV\omega)$ is  in  $C^\infty_c(\O)$ and then the convexity of $h_\eps$ implies
\begin{equation*}\int_\O\psi h'_\eps(\OV\omega)\ld\OV\omega dx\GEQ-\int_\O h_\eps(\OV\omega)\Delta\psi dx+\int_\O\ORA u\psi h'_\eps(\OV\omega)\cdot  \nabla\OV\omega dx.\end{equation*}
Since $\div(\ORA u)=0$, and $h'_\eps(\OV\omega)\nabla\OV\omega=\nabla h_\eps(\OV\omega)$ we have 
\begin{equation*}\int_\O\ORA u\psi h'_\eps(\OV\omega)\cdot\nabla\OV\omega dx=\int_\O\ORA u\psi\cdot\nabla h_\eps(\OV\omega)dx=
-\int_\O\ORA u\cdot \nabla\psi h_\eps(\OV\omega)dx.\end{equation*}
Thus we get (\ref{eq750}).\\

As in \cite{MV}, letting $\eps\to0$, we have
\begin{equation*}\int_\O\OV\omega_+\ld^*\psi dx\LEQ \int_\O\psi\sign_+(\OV\omega)\ld\OV\omega dx \quad\forall\,\psi\in\D(\O),\ \psi\GEQ0.\end{equation*}
We derive conclusion1., as in \cite{MV}, for $\OV\omega\in W^{1,1}_c(\O)$ and  the same for conclusion 2.
\end{proof}

To extend  the set of test functions from $\D(\O)$ to   other sets of functions we need the following approximation result. 

\begin{lem}\label{l800}{\bf (Approximation of functions in  $W^{m,\infty}(\O)$ by a sequence in $W^{m,\infty}_c(\O)$)}\ \\
	Let $W^{m,\infty}_c(\O)=\Big\{\f\in W^{m,\infty}(\O) \hbox{ with compact support}\Big\}$, $1<m<+\infty$ and assume that $\p\O$ is of class $C^m,\ r>0$. Then, for $\f\in W^{m,\infty}(\O)$ there exists a sequence $(\f_n)_n,\ \f_n\in W^{m,\infty}_c(\O)$, such that
	\begin{enumerate}
		\item $\d^r(D^\a\f_n)\to\d^r(D^\a\f)$ strongly in $L^\infty(\O)$, for all $\a $ such that $|\a|<r$.
		\item Moreover, if $\f\in W^{1,\infty}_0(\O)$ then
		\begin{equation*}\sup_n||\nabla\f_n||_\infty\LEQ c_\O||\nabla\f||_\infty, \hbox{ ($c_\O$ with independent of $\f$)},\end{equation*}
		\begin{equation*}\d^r(D^\a\f_n)\to\d^r(D^\a\f)\hbox{ strongly in $L^\infty(\O)$ for } |\a|<r+1. \end{equation*}
		\item If $\f\GEQ0$ then one can take $\f_n\GEQ0$.
		\item If $\f\in C^m(\OV\O)$ then $\f_n\in C_c^m(\O)$. By the density of $C^\infty_c(\O)$ in $C^m_c(\O),\ \f_n$ in this case can be taken in $C^\infty_c(\O)$.
	\end{enumerate}
\end{lem}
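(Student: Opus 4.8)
The plan is to build $\f_n$ by multiplying $\f$ by a boundary cut-off manufactured from the distance function. Since $\p\O$ is of class $C^m$ with $m\GEQ2$, the distance $\d$ is itself of class $C^m$ on a tubular neighbourhood $U=\{\d<\d_0\}$ of $\p\O$, with all its derivatives up to order $m$ bounded there and with $|\nabla\d|=1$ (the eikonal identity). Fix once and for all $\zeta\in C^\infty(\R)$ with $0\LEQ\zeta\LEQ1$, $\zeta(t)=0$ for $t\LEQ1$ and $\zeta(t)=1$ for $t\GEQ2$, and set, for $n$ so large that $2/n<\d_0$,
\[
\t_n(x)=\zeta\big(n\,\d(x)\big),\qquad \f_n=\t_n\f.
\]
Then $\t_n\equiv0$ on $\{\d\LEQ1/n\}$, so $\f_n$ vanishes in a neighbourhood of $\p\O$ and $\f_n\in W^{m,\infty}_c(\O)$ (all derivatives of $\t_n$ are supported in the collar $\{1/n\LEQ\d\LEQ2/n\}\subset U$, where $\d$ is $C^m$). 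The whole proof then reduces to controlling, by the Leibniz rule,
\[
\d^r\big[D^\a\f_n-D^\a\f\big]=\d^r(\t_n-1)D^\a\f+\sum_{0<\b\LEQ\a}\binom{\a}{\b}\,\d^rD^\b\t_n\,D^{\a-\b}\f .
\]

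For the diagonal term, $\t_n-1$ is supported in $\{\d\LEQ2/n\}$ and bounded by $1$, so $\|\d^r(\t_n-1)D^\a\f\|_\infty\LEQ(2/n)^r\|D^\a\f\|_\infty\to0$, where the hypothesis $r>0$ is used. For the cross terms the crucial point is a scaling estimate from the Fa\`a di Bruno formula: each differentiation of $\zeta(n\d)$ brings down a factor $n$, so on the collar $|D^\b\t_n|\LEQ C\,n^{|\b|}$, whence
\[
\d^r|D^\b\t_n|\LEQ C\,(2/n)^r n^{|\b|}=C'\,n^{|\b|-r}.
\]
Each cross term is therefore bounded by $C'n^{|\b|-r}\|D^{\a-\b}\f\|_\infty$, which tends to $0$ provided $|\b|<r$. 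Since $1\LEQ|\b|\LEQ|\a|<r$ for every $\a$ admitted in item~1, this settles the first assertion.

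The improvement in item~2 comes from the extra boundary vanishing of $\f$. For $\f\in W^{1,\infty}_0(\O)$ the zero extension is Lipschitz across $\p\O$, which gives the pointwise Hardy-type bound $|\f(x)|\LEQ c_\O\|\nabla\f\|_\infty\d(x)$. The only cross term not yet covered for $|\a|<r+1$ is the top one $\b=\a$, pairing $D^\a\t_n$ with $\f$ itself; using that bound it is controlled by $\d^r|D^\a\t_n|\,|\f|\LEQ Cn^{|\a|}\d^{r+1}\|\nabla\f\|_\infty\LEQ C'n^{|\a|-r-1}\|\nabla\f\|_\infty\to0$, while for $\b<\a$ one has $|\b|\LEQ|\a|-1<r$ and the previous estimate already applies. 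The uniform gradient bound follows from $\nabla\f_n=\t_n\nabla\f+\f\,\nabla\t_n$ together with $|\nabla\t_n|\LEQ n\|\zeta'\|_\infty$ on the collar and $|\f|\LEQ c_\O\|\nabla\f\|_\infty(2/n)$ there, giving $\|\nabla\f_n\|_\infty\LEQ(1+2c_\O\|\zeta'\|_\infty)\|\nabla\f\|_\infty$. Items 3 and 4 are then immediate: $\t_n\GEQ0$ forces $\f_n\GEQ0$ when $\f\GEQ0$; and if $\f\in C^m(\OV\O)$ then $\f_n=\t_n\f\in C^m_c(\O)$ since $\t_n\in C^m$ (using $\d\in C^m$ on the collar once more), after which a diagonal mollification $\f_n\star\rho_{j(n)}$ yields functions in $C^\infty_c(\O)$ preserving all the stated $L^\infty$ limits.

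I expect the genuinely delicate point to be the balance of powers of $n$ in the cross terms: the estimate $\d^r|D^\b\t_n|\approx n^{|\b|-r}$ is exactly borderline, which is why item~1 can reach only $|\a|<r$, and recovering the extra unit in item~2 rests entirely on the boundary bound $|\f|\LEQ c_\O\|\nabla\f\|_\infty\d$. Establishing that bound carefully, together with the $C^m$-regularity and the eikonal property $|\nabla\d|=1$ of $\d$ near $\p\O$ (which is precisely where the hypothesis $\p\O\in C^m$ is spent), is the step demanding the most attention.
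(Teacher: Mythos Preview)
Your proof is correct and follows essentially the same approach as the paper: define $\f_n=\t_n\f$ with $\t_n$ a smooth cutoff built from $\d$ (the paper uses $h_\eps(x)=h\big((2\d(x)-\eps)/\eps\big)$ with $\eps=1/n$, which is the same object up to a harmless rescaling), then expand via Leibniz and balance the powers $\d^r$ against $n^{|\b|}$ on the collar, recovering one extra unit in item~2 from $|\f|\LEQ c_\O\|\nabla\f\|_\infty\d$. Your write-up is in fact slightly more explicit than the paper's about why only the top term $\b=\a$ needs the boundary bound on $\f$ when $|\a|<r+1$.
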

\begin{proof}
Let $h\in C^\infty(\R)$ be such that $0\LEQ h\LEQ1$, $h(\s)=\begin{cases}1&\hbox{if }\s\GEQ1,\\0&\hbox{if\ }\s\LEQ0\end{cases}$\\
Since $\p\O\in C^m$, $\d$ is of class $C^m$ in a neighborhood $U$ of $\p\O$ (see \cite{g1}). Let $0<\eps<1$ be such that
\begin{equation*}\Big\{x\in\O:\d(x)\LEQ\eps\Big\}\subset U\end{equation*}
and define, for $x\in\O$,
\begin{equation}\label{eqet}h_\eps(x)=h\left(\dfrac{2\d(x)-\eps}\eps\right),\end{equation}
so that $h_\eps(x)=1$ if $\d(x)>\eps,$ $ h_\eps(x)\to1$\ as\ $\eps\to0,$ and $h_\eps(x)=0$ if $\d(x)<\eps/2.$\\
One has
\begin{equation*}|D^\a h_\eps(x)|\LEQ c\,\eps^{-|\a|},\hbox{
	for a constant $c>0$ independent of $x$ and $\eps$.}\end{equation*}
Since we have, by Leibniz's formula
\begin{equation}\label{eq801}
D^\a\big(\f(1-h_\eps)\big)(x)=\sum_{\b+\c=\a}c_{\c\b}D^\b\f(x)D^\c(1-h_\eps)(x),
\end{equation}
($c_{\c\b}$ are constant depending only on $\c,\ \b$) and for $\c\neq0$.
\begin{equation}\label{es802}
\d^r(x)\big|D^\c h_\eps(x)\big|\LEQ c\,\eps^{-|\c|+r},
\end{equation}
we then deduce, that
\begin{equation*}\d^r(x)\big|D^\a\big(\f(1-h_\eps)\big)(x)\big|\LEQ c\left[\sum_{\b+\c=\a,\,\c\neq0}|D^\b\f(x)|\eps^{-|\c|+r}+\d^r|D^\a\f|(1-h_\eps)\right].\end{equation*}
Therefore
\begin{equation}\label{eq803}
\sup_{x\in\O}\d^r(x)\big|D^\a\f(1-h_\eps)(x)\big|\LEQ c\,\eps^{-|\a|+r}.
\end{equation}
Taking $\eps=\dfrac1n,$ and $\f_n=h_{\frac1n}\f$ is convenient for large $n\GEQ n_0$.
If furthermore $\f\in W^{1,\infty}_0(\O)$ then
\begin{equation*}|\f(x)|\LEQ\d(x)||\nabla\f||_\infty.\end{equation*}
Hence,
\begin{equation*}\d^r\big|D^\a\big(\f(1-h_\eps)\Big)(x)\big|
\LEQ c\,\d^{r+1}(x)\eps^{-|\a|}+c\!\!\!\!\!\!\sum_{\b\neq0,\b+\c=\a}\!\!\!\!\!\!\!\big|D^\b\f|(x)\big|D^\c(1-h_\eps)|\d^r(x)\LEQ c\,\eps^{-|\a|+r+1}.\end{equation*}
On the other hand
\begin{equation*}\begin{cases}
\hbox{on\ }\d(x)\LEQ\eps&\big|\nabla(\f h_\eps)(x)\big|\LEQ|\f(x)|\nabla h_\eps(x)|+2||\nabla\f||_\infty\LEQ c\,||\nabla\f||_\infty\left[1+\dfrac{\d(x)}\eps\right]\\&\ \qquad\quad\qquad\LEQ c\,||\nabla\f||_\infty,\\
\hbox{on\ }\d(x)>\eps&\big|\nabla\f_n(x)\big|\LEQ2||\nabla\f||_\infty.
\end{cases}\end{equation*}
Moreover, one has
\begin{equation*}\d^r(x)\Big|\nabla\big(\f(1-h_\eps)\big)\Big|(x)
\LEQ\d^r|D\f|(x)\big(1-h_\eps(x)\big)+c\,\d^{r+1}(x)||\nabla\f||_\infty|\nabla h_\eps|\LEQ c\,\eps^r.\end{equation*}
\  \end{proof}

Thanks to the above approximation lemma we can modify the set of the test functions in the Kato's inequality as follows
\begin{cor} {\bf (of Theorem \ref{t224} : Variant of Kato's inequality)}\label{c4t224}\ \\
	Let $\OV\omega$ be in $W^{1,1}_\loc(\O)\cap L^{N',\infty}(\O)$, $\OV\omega\in L^1(\O;\d^{-r})$ for $r>1$ and $\ORA u\in  L^{N,1}(\O)^N$ with $\div(\ORA u)=0,\ \ORA u\cdot\ORA n=0$. Assume furthermore that 
	$\ld\OV\omega=-\Delta\OV\omega+\div(\ORA u \OV\omega)$ is in $L^1(\O;\d)$. \\Then for all $\phi\in C^2(\OV\O),\ \phi=0$ on $\p\O$, $\phi\GEQ0$ one has
	\begin{enumerate}
		\item $\DST\int_\O\OV\omega_+\ld^*\phi dx\LEQ\int_\O\phi\,\sign_+(\OV\omega)\ld(\OV\omega)dx,$
		\item $\DST\int_\O|\omega|\ld^*\phi dx\LEQ\int_\O\phi\,\sign(\OV\omega)\ld(\OV\omega)dx,$
	\end{enumerate}
	where $\ld^*\phi=-\Delta\phi-\ORA u\cdot\nabla \phi=-\Delta\phi-\div(\ORA u\phi).$
\end{cor}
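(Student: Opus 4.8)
The plan is to reduce the statement to the local Kato inequality of Theorem \ref{t224}, whose test functions live in $\D(\O)$, by approximating the given $\phi\in C^2(\OV\O)$ (with $\phi=0$ on $\p\O$, $\phi\GEQ0$) by the compactly supported sequence furnished by Lemma \ref{l800}. Concretely, I would set $\phi_n=h_{1/n}\phi$, which by items 2--4 of that lemma (with $m=2$, and noting $\phi\in W^{1,\infty}_0(\O)$) lies in $C^2_c(\O)$, stays nonnegative, and may even be taken in $C^\infty_c(\O)=\D(\O)$; moreover $\sup_n\|\nabla\phi_n\|_\infty\LEQ c_\O\|\nabla\phi\|_\infty$ and $\d^r D^\a\phi_n\to\d^r D^\a\phi$ in $L^\infty(\O)$ for every $|\a|<r+1$. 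First I would check that Theorem \ref{t224} genuinely applies to $\OV\omega$: we have $\OV\omega\in W^{1,1}_\loc(\O)$, while $\OV\omega\in L^{N',\infty}(\O)$ and $\ORA u\in L^{N,1}(\O)^N$ give $\ORA u\,\OV\omega\in L^1(\O)\subset L^1_\loc(\O)$ by H\"older's inequality in Lorentz spaces, and $\ld\OV\omega\in L^1(\O;\d)\subset L^1_\loc(\O)$. Applying Theorem \ref{t224} to each $\phi_n$ then yields $\int_\O\OV\omega_+\,\ld^*\phi_n\,dx\LEQ\int_\O\phi_n\,\sign_+(\OV\omega)\,\ld(\OV\omega)\,dx$, and the entire task becomes passing to the limit $n\to\infty$.

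For the right-hand side I would use that $\phi=0$ on $\p\O$ together with $\phi\in C^2(\OV\O)$ forces the Hardy-type bound $|\phi|\LEQ\|\nabla\phi\|_\infty\,\d$ (the same estimate used inside the proof of Lemma \ref{l800}), whence $|\phi_n|\LEQ\|\nabla\phi\|_\infty\,\d$ since $0\LEQ h_{1/n}\LEQ1$. Because $\d\,|\ld\OV\omega|\in L^1(\O)$ and $\phi_n\to\phi$ pointwise in $\O$, dominated convergence gives convergence of the right-hand side to $\int_\O\phi\,\sign_+(\OV\omega)\,\ld(\OV\omega)\,dx$.

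For the left-hand side I would split $\ld^*\phi_n=-\Delta\phi_n-\ORA u\cdot\nabla\phi_n$. The convection term is the easy one: $\|\nabla\phi_n\|_\infty$ is bounded uniformly in $n$ and $\nabla\phi_n\to\nabla\phi$ pointwise in $\O$ (for fixed interior $x$ one has $h_{1/n}(x)=1$ and $\nabla h_{1/n}(x)=0$ as soon as $1/n<\d(x)$), while $\OV\omega_+\,\ORA u\in L^1(\O)^N$, so dominated convergence applies. The diffusion term is where the weighted hypothesis enters: I would rewrite $\int_\O\OV\omega_+(-\Delta\phi_n)\,dx=\int_\O\big(\d^{-r}\OV\omega_+\big)\,\d^r(-\Delta\phi_n)\,dx$, where $\d^{-r}\OV\omega_+\in L^1(\O)$ because $\OV\omega\in L^1(\O;\d^{-r})$, and then invoke the $L^\infty$-convergence $\d^r\Delta\phi_n\to\d^r\Delta\phi$ from Lemma \ref{l800}; the $L^1$--$L^\infty$ duality closes the argument and identifies the limit as $\int_\O\OV\omega_+(-\Delta\phi)\,dx$.

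The main obstacle is precisely this second-order term. The cut-off $h_{1/n}$ has $|D^2h_{1/n}|\sim n^2$ concentrated on the shrinking boundary band $\{\,1/(2n)\LEQ\d\LEQ1/n\,\}$, so a naive estimate of $\int\OV\omega_+\Delta\phi_n$ diverges. What rescues the argument is that the $L^\infty$-convergence of $\d^r D^\a\phi_n$ in Lemma \ref{l800} holds up to $|\a|<r+1$, which covers $|\a|=2$ \emph{exactly} when $r>1$; the weight $\d^{-r}$ carried by $\OV\omega$ then absorbs the $n^2$ blow-up because $\d\sim 1/n$ on the band. This is the structural reason the hypothesis reads $r>1$. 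Once conclusion 1 is established, conclusion 2 follows by the identical limiting procedure starting from part 2 of Theorem \ref{t224} (Kato's inequality for $|\OV\omega|$), replacing $\OV\omega_+$ and $\sign_+$ by $|\OV\omega|$ and $\sign$.
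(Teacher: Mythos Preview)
Your proposal is correct and follows essentially the same route as the paper: approximate $\phi$ by the cut-off sequence $\phi_n$ of Lemma \ref{l800}, pass the Laplacian term to the limit via the pairing $\big(\d^{-r}\OV\omega_+\big)\cdot\big(\d^r\Delta\phi_n\big)$ using $\OV\omega\in L^1(\O;\d^{-r})$ and the $L^\infty$-convergence of $\d^r\Delta\phi_n$, handle the convection term by dominated convergence with $\ORA u\,\OV\omega_+\in L^1(\O)^N$ and the uniform bound on $\|\nabla\phi_n\|_\infty$, and bound the right-hand side through $|\phi_n|\LEQ c\,\d$ together with $\ld\OV\omega\in L^1(\O;\d)$. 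Your extra verification that Theorem \ref{t224} is indeed applicable (in particular $\ORA u\,\OV\omega\in L^1$ via the Lorentz-H\"older pairing $L^{N,1}\times L^{N',\infty}$) and your explanation of why the threshold $r>1$ is exactly what makes the second-order term converge are welcome additions that the paper leaves implicit.
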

\begin{proof}
Let $\phi\GEQ0$ be in $C^2(\OV\O)$ with $\phi=0$ on $\p\O$. Then according to Lemma \ref{l800}, we have a sequence $\phi_n\in C^2_c(\O)$, $\phi\GEQ0$, such that
\begin{equation*}\begin{cases}
\d^r\Delta\phi_n\to\d^r\Delta\phi &\hin\ \ C(\OV\O)\hbox{ for }r>1,\\
\d^r\nabla\phi_n\to\d^r\nabla\phi &\hin\ \ C(\OV\O)^N,\ ||\nabla\phi_n||_\infty\LEQ c||\nabla\phi||_\infty.
\end{cases}\end{equation*}
Therefore
\begin{equation*}\lim_{n\to+\infty}\int_\O\OV\omega_+\Delta\phi_ndx=\lim_{n\to+\infty}\int_\O\OV\omega_+\d^{-r}\cdot\d^r\Delta\phi_ndx=\int_\O\OV\omega_+\Delta\phi dx,\end{equation*}
since $\OV\omega_+\in L^1(\O;\d^{-r})$ and $r>1$.\\
By the Lebesgue dominated convergence theorem, one has
\begin{equation*}\lim_{n\to+\infty}\int_\O\ORA u\cdot\nabla\phi_n\OV\omega_+dx=\int_\O\ORA u\cdot\nabla \phi\OV\omega_+dx,\hbox{ since } \ORA u\cdot\OV \omega_+\in L^1(\O)^N.\end{equation*}
Therefore
\begin{eqnarray*}
	\int_\O\OV\omega_+\ld^*\phi dx&=&\lim_{n\to\infty}\int_\O\OV\omega_+\ld^*\phi_ndx\LEQ\lim_{n\to+\infty}\int_\O\phi_n\sign_+\OV\omega\,\sign_+\ld\OV\omega dx\\
	&=&\int_\O\phi\,\sign_+\OV\omega\ld\OV\omega \hbox{ (since }\dfrac{|\phi_n|}\delta\LEQ ||\nabla\phi_n||_\infty\LEQ c||\nabla\phi||_\infty).
\end{eqnarray*}

Now we come to the proof of the uniqueness result stated in Theorem \ref{t222}.\\
\begin{proof}[Proof of Theorem \ref{t222}]
Since the v.w.s. $\omega$ satisfies $V\omega\in L^1(\O;\d)$, so if $V\GEQ c\,\d^{-r}$, for $r>2$,  we have in a neighborhood $U$ of $\p\O$
\begin{equation*}\int_\O|\omega|\d^{- (r-1)}dx\LEQ c\int_U V|\omega|\d dx+c_1\int_\O|\omega|dx<+\infty.\end{equation*}
Thus $\omega\in L^1(\O;\d^{-\widetilde r})$ with $\widetilde r=r-1>1$ for $r>2$.\\ If $\omega_1,\ \omega_2$ are two v.w.s. then $\omega=\omega_1-\omega_2$
\begin{equation*}\ld\omega=\ld(\omega_1-\omega_2)=-\Delta\omega+\div(\ORA u \omega)=-V\omega\in L^1(\O;\d).\end{equation*}
We deduce from the  Corollary \ref{c4t224}  of Theorem \ref{t224} that $\forall\,\phi\GEQ0,\ \phi\in C^2(\OV\O),\ \phi=0$ on $\p\O$
\begin{equation*}\int_\O|\omega|\ld^*\phi dx\LEQ-\int_\O\phi\,\sign(\omega)V\omega dx=-\int_\O\phi\, V|\omega|dx\LEQ0.\end{equation*}
For $\ORA u\in L^{p,1}(\O)^N$, ($p\GEQ N$ as in the statement of Theorem~\ref{t222}) let us consider $\phi_0\in H^1_0(\O)$ solution of 
\begin{equation*}\ld^*\phi_0=-\Delta\phi_0-\ORA u\,\nabla\phi_0=1.\end{equation*}
Then $\phi_0\GEQ0,\ \phi_0\in W^2 L^{p,1}(\O)$ according to the above regularity result, (see Propositions \ref{p8} or \ref{p9}) and $\phi_0$ can be approximated by a sequence $\phi_{0j}\in C^2(\OV\O),\ \phi_{0j}\GEQ0$,\ $\phi_{0j}=0$ on $\p\O$ satisfying
\begin{equation*}\ld_j^*\phi_{0j}=-\Delta\phi_{0j}-\ORA {u_j}\cdot\nabla\phi_{0j}=1,\ {\ORA u}_j\to\ORA u\hbox{ in }L^{p,1},\ {\ORA u}_j\in\V,\end{equation*}
so that
\begin{equation*}||\phi_{0j}||_{W^2L^{p,1}}\LEQ c.\end{equation*}
Indeed, we may assume that $\phi_{0j}$ converges weakly to a function $\OV\phi_0$ in $W^2L^{p,1}(\O)$, \begin{equation*}\nabla\phi_{0j}(x) \to\nabla\OV\phi_0(x)\hbox{ and }\phi_{0j}(x)\to\OV\phi_0(x)\ a.e.\ x\in\O.\end{equation*}
Since
\begin{equation*}\int_\O|\omega|\,|\ORA {u_j}-\ORA u|\LEQ||\ORA {u_j}-\ORA u||_{L^{N,1}}|\omega|_{L^{N',\infty}},\end{equation*}
and \begin{equation*}||\nabla\phi_{0j}||_\infty\LEQ c,\end{equation*}
we deduce that
\begin{equation*}\lim_{j\to+\infty}\int_\O|\omega|{\ORA u}_j\cdot\nabla\phi_{0j}=\int_\O|\omega|\ORA u\cdot\nabla\OV\phi_0 dx.\end{equation*}
Thus 
\begin{equation*}\ld^*\OV\phi_0=1,\quad\OV\phi_0\in W^2L^{N,1}(\O)\cap H^1_0(\O).\end{equation*}
By uniqueness $\OV\phi_0=\phi_0$ and then $\DST \ld_j^*\phi_{0j}\rightharpoonup \ld^*\phi_0\hbox{ weakly in }L^{N,1}.$
Since, we have 
\begin{equation*}\int_\O|\omega|\ld^*\phi_{0j} dx\LEQ0.\end{equation*}
\begin{equation*}0\LEQ\int_\O|\omega|dx=\int_\O|\omega|\ld_j^*\phi_{0j}dx\LEQ\int_\O|\omega|(\ld^*_j\phi_{0j}-\ld^*\phi_{0j})dx\xrightarrow[j\to+\infty]{}0,\end{equation*}
we arrive to $\omega=0$. \end{proof}

\begin{rem}\ \\
 In Theorem \ref{t223} and Theorem \ref{t224}, if $\ORA u\equiv 0$ (or $\ORA u\in C^1(\OV\O)^N$) then we can weaken  the conditions on $\OV \omega$ reducing it to $\OV\omega$ belongs to  $L^1(\O;\d^{-r}),\ r>1$. Then the above conclusions hold true.
\end{rem}
\begin{rem}\ \\
	In fact, in Corollary 3, we can state that the unique solution of (\ref{eq1}) (without any indication of the boundary condition) {\bf must} satisfy that $\omega=0$ on $\p\O$ at least if $\omega$ is differentiable. Indeed, a consequence of Lemma 7 we have 
	\begin{equation*}L^1(\O,\d^{-r})\cap W^1L^{p,q}(\O)=W_0^1L^{p,q}(\O)\hbox{ if }r>1 \ (1\LEQ p,q\LEQ+\infty).\end{equation*}
\end{rem}

\begin{rem} There is a large amount of works in the literature in which the uniqueness of solutions of suitable elliptic problems is established without indicating any boundary condition but these previous papers deal with degenerate elliptic operators (see, e.g. \cite{BaouendiMS}, \cite{BG}, \cite{DT} and the references therein). We point out that the main reason to get this type of results in our case (in which the diffusion operator is the simplest one and is not degenerate) is the presence of a very singular coefficient of the zero order term (the potential $V(x)$) which is "pathological" since it is more singular on the boundary of the domain than what the Hardy inequality may allow.
\end{rem}

\subsection{Boundedness in $L^{N'}(\O)$ of the v.w.s., regularity and blow-up in absence of any  potential ($V=0$)}\label{s4}

Since the very weak solutions found in Theorem \ref{t2.3} needs not be in $L^1(\O)$ our main goal now (assuming $V\equiv0$) is to analyze under which conditions $\omega$ is globally integrable. We have
\begin{theo}\label{t1}{\bf (Integrability in $L^{N'}(\O)$.)}\\
	Let $f$ be in $L^1\Big(\O;\d\big(1+|\ln\d|\big)^{\frac1{N'}}\Big),\ \dfrac1N+\dfrac1{N'}=1$, $V=0$,   $\ORA u\in (L^N(\ln L)^{\frac\b N})^N,$ with $\b>N-1$, $\div(\ORA u)=0$ in $\O$ and $\ORA u\cdot\ORA n=0$ on $\p\O$. Then the unique very weak solution $\omega$ of   equation (\ref{eq1}) belongs to $L^{N'}(\O)$.
\end{theo}
We recall the
\begin{lem}\label{l20}{\bf (see \cite{RakoJFA})}\ \\
	Let $\O$ be a bounded open Lipschitz set and $\a>0$. Then, there exists a constant $c_\a(\O)>0$ such that $\forall\,\phi\in W^1_0L^\a_\exp(\O)$
	\begin{equation*}|\phi(x)|\LEQ c_\a(\O)\d(x)(1+|\ln\d(x)|)^\a||\nabla \phi||_{L^\a_{\exp}(\O)}.\end{equation*}
\end{lem}

\begin{proof}[Proof of Theorem \ref{t1} (boundedness in $L^{N'}(\O)$)]
Let $\omega$ be the very weak solution found in Theorem \ref{t2.3}  and assume that 
\begin{equation*}f\in L^1\big(\O;\d(1+|\ln \d|)^{\frac1{N'}}\big).\end{equation*} 
We know that there exists a sequence $\ORA u_j\in\V$ such that the corresponding sequence $(\omega_j)_j$ satisfying relation (\ref{eq18}) verifies $\omega_j\rightharpoonup \omega$ weak-* in $L^{N',\infty}$ and that $\forall\,\phi\in H^1_0\cap W^2L^N(\O)$
\begin{equation}\label{eq27}
\int_\O\omega_j\big[-\Delta\phi-\ORA{ u_j}\nabla\phi\big]dx=\int_\O f\phi dx.
\end{equation}
Here $\ORA u_j$ converges in $(L^N(\ln L)^{\frac\b N})^N=\Lambda$ to $\ORA u$ strongly where  $\b>N-1$. Let $g\in L^N(\O)$and let $\phi_j$ be the solution of 
\begin{equation*}\phi_j\in W^2L^N(\O)\hbox{ such that }
-\Delta\phi_j-{\ORA u}_j\nabla \phi_j=g\hbox{ in }\O,\ \phi_j=0\hbox{ on }\p\O.\end{equation*}
Then according to Theorem \ref{t4}, we have 
\begin{equation*}||\phi_j||_{W^2L^N(\O)}\LEQ K_\eps\dfrac{1+||{\ORA u}_j||_\Lambda}{1-\eps||{\ORA u}_j||_\Lambda}||g||_{L^N(\O)},
\end{equation*}
with
\begin{equation*}\eps\sup_j||{\ORA u}_j||_\Lambda\LEQ\dfrac12,\hbox{ for some $\eps>0$.}\end{equation*}
Thus
\begin{equation}\label{eq28}
||\phi_j||_{W^2 L^N(\O)}\LEQ K(\O)||g||_{L^N(\O)}.
\end{equation}
By the Trudinger's type inclusion (see Lemma \ref{l})
\begin{equation}\label{eq29}
||\nabla \phi_j||_{L^{\frac1{N'}}_\exp}\LEQ K_{10}||\phi_j||_{W^2L^N(\O)}\LEQ K_{11}||g||_{L^N(\O)}.
\end{equation}
Therefore, considering equation (\ref{eq27}), we have
\begin{equation}\label{eq30}
\int_\O\omega_jgdx=\int_\O f\phi_jdx,
\end{equation}
with the help of Lemma \ref{l20}  with $\a=\dfrac1{N'}$ and estimate (\ref{eq29}), this relation gives:
\begin{equation}\label{eq31}
\int_\O\omega_jg dx\LEQ K_{12}||g||_{L^N}\int_\O|f|\d(x)(1+|\ln\d(x)|)^{\frac1{N'}}dx.
\end{equation}
Hence
\begin{equation}\label{eq32}
\sup_{||g||_{L^N}=1}\int_\O\omega_j gdx\LEQ K_{12}\int_\O|f|\d(x)(1+|\ln\d(x)|)^{\frac1{N'}}dx,
\end{equation}
which shows that :
\begin{equation}\label{eq33}
||\omega||_{L^{N'}(\O)}\LEQ K_{12}\int_\O|f|\d(x)(1+|\ln\d(x)|)^{\frac1{N'}}dx,
\end{equation}
proving the result.  \end{proof}

For the case $V\equiv 0$, we can always obtain the $W^{1,q}(\O)$-regularity, for $q\GEQ1$, provided some integrability on $f$ but also on $\ORA u$. Here is a first result in that direction~:

\begin{theo}\label{t2.4}\ \\
	Let $f$ be in $L^1(\O;\d(1+|\ln\d|))$, $V=0$, and $\ORA u$ in $\bmor(\O)^N$. Then, the very weak solution found in Theorem \ref{t2.3}  belongs to $W^{1,1}_0(\O).$
\end{theo}
 \begin{proof}
As before we consider the approximating problem (\ref{eq17}) with ${\ORA u}_j=\ORA u$, say
\begin{equation*}\begin{cases}
-\Delta \omega_j+\ORA u\cdot \nabla \omega_j=f_j &\hin\ \ \O,\\
\omega_j\in H^1_0(\O)\cap W^2L^{p,1}(\O)&\forall\,p<+\infty.
\end{cases}
\end{equation*}
Thus, taking $\phi\in W^1_0\bmor(\O)$ we have
\begin{equation*}\int_\O\nabla\omega_j\cdot\nabla\phi\, dx
+\int_\O\ORA u\cdot\nabla\omega_j\phi\, dx
=\int_\O f_j\phi dx\Longleftrightarrow 
\int_\O\big[\nabla\omega_j\cdot\nabla\phi-\ORA{u}\cdot\nabla \phi\,\omega_j\big]dx=\int_\O f_j\phi\, dx.\end{equation*}
Let $F_j= 
\dfrac{\nabla \omega_j}{|\nabla \omega_j|}\hbox{ if }\ \nabla\omega_j\neq0,\hbox{ and } 
0\hbox{ otherwise,}\quad F_j\in L^\infty(\O)^N,\quad ||F_j||_\infty\LEQ1.$ According to Proposition \ref{p7}, there exists a function $\phi_j\in W^1_0\bmor(\O)$ such that 
\begin{equation*}-\Delta\phi_j-\ORA u\nabla\phi_j=-\div(F_j),\hbox{ and }||\phi_j||_{W^1_0L^q}\LEQ c_9||F_j||_{L^q}\LEQ c_q<+\infty\ \forall\,q>1,\end{equation*}\begin{equation*}
\Longleftrightarrow\int_\O\nabla\phi_j\nabla\f\,dx-\int_\O\ORA u\nabla\phi_j\f \,dx=\int_\O F_j\nabla \f\,dx\quad\forall\,\f\in H^1_0(\O).\end{equation*}
Choosing $\f=\omega_j$, we have
\begin{equation}\label{eq34}
\int_\O|\nabla\omega_j| dx=\int_\O\nabla\phi_j\cdot\nabla\omega_j\,dx-\int_\O\ORA u\cdot\nabla\phi_j\omega_j\,dx=\int_\O f_j\phi_j\,dx.
\end{equation}
From\,Lemma\,\ref{l20},   and  by the\,John-Nirenberg\,inequality\,(see \cite{Tor}) we have~:
\begin{equation}\label{eq35}
|\phi_j(x)|\LEQ c(\O)\d(x)(1+|\ln\d(x)|)||\nabla \phi_j||_{L_\exp}
\LEQ c(\O)\d(x)(1+|\ln\d(x)|)||\nabla\phi_j||_{\bmor(\O)}.
\end{equation}
We recall that
\begin{equation}\label{eq36}
||\nabla \phi_j||_\bmor\LEQ K(||F_j||_\infty+||\ORA u\phi_j||_\bmor)\LEQ c,
\end{equation}
since $\phi_j\to\phi$ strongly in $C^{0,\a}(\OV\O)$ (see Proposition \ref{p7}).\\
Combining (\ref{eq34}) to (\ref{eq36}), we have
\begin{equation}\label{eq37}
\int_\O|\nabla\omega_j|dx\LEQ c\int_\O|f_j|\d(x)(1+|\ln\d|)dx\LEQ K\int_\O|f|\d(1+\ln\d|)dx;
\end{equation}
using also the fact that
\begin{equation*}\begin{cases}
\omega_j\rightarrow \omega\hbox{\hbox{ strongly } in }L^q(\O)\ q<N',\\
\omega_j\rightharpoonup\omega\hbox{ weakly in }W^{1,q}_\loc(\O)\ 1<q<1+\dfrac1N,
\end{cases}\end{equation*}
we deduce that :
\begin{equation*}\int_\O|\nabla\omega|dx\LEQ c\int_\O|f|\d(1+|\ln\d|)dx.\end{equation*}
\  \end{proof}

Let us prove that if we enhance the integrability condition on  $f$ to $f\in L^1(\O,\d^\a)$ for some $\a\in]\,0,1\,[$ then we can weaken the condition on $\ORA u$ to $\ORA u\in L^{\frac N{1-\a}}(\O)^N$ and in that case we have
\begin{theo}\label{t2.5}\ \\
	Let $f$ be in $L^1(\O,\d^\a)$ for some $\a\in]\,0,1\,[$, $V=0$, $\ORA u\in L^{\frac N{1-\a}}(\O)$ with $\div(\ORA u)=0,$ $\ORA u\cdot \ORA n=0$ on $\p\O$. Then, the very weak solution $\omega$ found in Theorem \ref{t2.3}   belongs to $W^1_0\LNua$. Moreover, there exists a constant $K(\a;\O)>0$ such that
	\begin{equation*}||\omega||_{W^1_0\LNua}\LEQ K(\a;\O)\big(1+||\ORA u||_{L^{\frac N{1-\a}}} \big)||f||_{L^1(\O,\d^\a)}.\end{equation*}
\end{theo}
The proof of Theorem \ref{t2.5} relies on the following result, dual  of Proposition~\ref{p5}.
\begin{propo}\label{p15}\ \\
	Let $\ORA u\in L^{p,q}(\O),\ p>N, q\in[\,1,+\infty]$, $V=0$, and $F\in L^{p',q'}(\O)^N,\ \dfrac1p+\dfrac1{p'}=1=\dfrac1q+\dfrac1{q'}$. Then there exists $\OV\omega\in W^1_0L^{p',q'}(\O)$ such that
	\begin{equation}\label{eq37a}
	-\Delta\OV\omega+\ORA u\cdot\nabla\OV\omega=-\div(F),\end{equation}
	\hbox{which is equivalent to }
	\begin{equation}\label{eq37aa}
	a(\OV\omega;\phi)=\int_\O\nabla\OV\omega\cdot\nabla\phi\,dx+\int_\O\ORA u\cdot\nabla\OV\omega\phi\,dx=\int_\O F\cdot\nabla \phi\,dx
	\end{equation}
	$\forall\,\phi\in W^1_0L^{p,q}(\O)$. Moreover
	\begin{equation*}||\nabla \omega||_{L^{p',q'}}\LEQ K_{pq}(1+||\ORA u||_{L^{p,q}})||F||_{L^{p',q'}}\end{equation*}
\end{propo}
 \begin{proof}
Let $G$ be in $L^{p,q}(\O)^N,\ p>N$. Following Proposition \ref{p5}, there exists   a function $\phi_0\in W^1_0L^{p,q}(\O)$ such that
\begin{equation*}\int_\O\nabla\phi_0\cdot\nabla\f\,fx-\int_\O\ORA u\cdot\nabla\phi_0\f\,dx=\int_\O G\cdot\nabla\f\,dx\qquad\forall\,\f\in C^\infty_c(\O).\end{equation*}
Since
\begin{equation*}-\int_\O\ORA u\cdot\nabla\phi_0\f\,dx=\int_\O\ORA u\cdot\nabla\f\phi_0\,dx,\end{equation*}
by using a density argument over the set of test functions there exists
\begin{equation}\label{eq38}
a(\f,\phi_0)=\int_\O\nabla\phi_0\cdot\nabla\f+\int_\O\ORA u\cdot\nabla\f\phi_0=\int_\O G\cdot\nabla \f \,dx\qquad\forall\,\f\in W^1_0L^{p',q'}(\O).
\end{equation}
Let $F_k\in L^\infty(\O)^N$, with $|F_k(x)|\LEQ |F(x)|$ in $\O$. Then we have that $\OV\omega_k\in W^1_0L^{p',q'}(\O)\cap H^1(\O)$ such that
\begin{equation}\label{eq39}
a(\OV \omega_k,\phi)=\int_\O F_k\cdot\nabla\phi\,dx\qquad\forall\phi\in W^1_0L^{p,q}(\O).
\end{equation}
Choosing $\phi=\phi_0$ in this last equation,  we find that 
\begin{equation}\label{eq40}
\int_\O G\cdot\nabla\OV\omega_k\,dx=a(\OV\omega_k,\phi_0)=\int_\O F_k\cdot\nabla\phi_0\,dx.
\end{equation}
Following Proposition \ref{p5}, we have
\begin{equation}\label{eq41}
||\nabla\phi_0||_{L^{p,q}}\LEQ K_{pq}(1+||\ORA u||_{L^{p,q}})||G||_{L^{p,q}}.
\end{equation}
From relation (\ref{eq40}) and (\ref{eq41}), we have
\begin{equation}\label{eq42}
\int_\O G\cdot\nabla\OV\omega_k\,dx\LEQ K_{pq}(1+||\ORA u||_{L^{p,q}})||F_k||_{L^{p',q'}}||G||_{L^{p,q}}.
\end{equation}
So that we have
\begin{equation}\label{eq43}
\sup_{||G||_{L^{p,q}}=1}\int_\O G\cdot\nabla\OV \omega_k dx\LEQ K_{pq}(1+||\ORA u||_{L^{p,q}})||F||_{L^{p',q'}}
\end{equation}

\begin{equation}\label{eq44}
||\nabla\OV{\omega_k}||_{L^{p',q'}}\LEQ K_{pq}(1+||\ORA u||_{L^{p,q}})||F||_{L^{p',q'}}.
\end{equation}
By  standard argument, we derive the existence of $\OV\omega$ satisfying (\ref{eq37a}) as a weak limit of $\OV\omega_k$ in $W^1L^{p',q'}(\O)$. \end{proof}

\begin{proof}[Proof of Theorem \ref{t2.5}]
Since $f\in L^1(\O;\d^\a)$, according to   \cite{DRJFA}, there exists $F=\nabla v\in L^{\frac N{N-1+\a}}(\O)^N$, $f=-\div(F)$. Moreover, the function $F_k=\nabla v_k$ satisfying $-\Delta v_k=T_k(f)$ converge to $F$ strongly in $L^{\frac N{N-1+\a}}(\O)^N$ ($v_k$ and $v$ are in $W^1_0\LNua$).

Since the very weak solution $\omega$ found in Theorem \ref{t2.3}   is the weak-* limit of the solutions of the regularized problem
\begin{equation*}\begin{cases}
-\Delta \omega_k+\ORA u\cdot\nabla \omega_k=f_k=T_k(f)=-\div(F_k),\\
\omega_k\in W^2L^q(\O)\cap H^1_0(\O)\hbox{ with } q=\dfrac N{1-\a}>N,
\end{cases}\end{equation*}
and \begin{equation*}||\nabla\omega_k||_{L^{q'}(\O)}\LEQ K_q(1+||\ORA u||_{L^q})||F_k||_{L^{q'}(\O)},\quad q'=\dfrac N{N-1+\a},\end{equation*}
letting $k\to+\infty$, we derive the result once we know that $||F||_{L^{q'}}\LEQ c||f||_{L^1(\O,\d^\a)}$. \end{proof}

When $\a=0$, that is $f\in L^1(\O)$, we can weaken the integrability assumption on $\ORA u$ as we state in the following result :
\begin{theo}\label{t10.1xx}\ \\
	Let $f$ be in $L^1(\O)$, $V=0$, $\ORA u\in L^N(\O)^N$ with $\div(\ORA u)=0$ on $\p\O$, $\ORA u\cdot\ORA n=0$ on $\p\O$. Then, the very weak solution $\omega $ found in Theorem \ref{t2.3}  belongs to $W^1_0L^{N',\infty}(\O)$.\\
	Moreover, there exists a constant $c(\O)>0$, independent of $\ORA u$, such that 
	\begin{equation*}||\nabla \omega||_{L^{N',\infty}(\O)}\LEQ c(\O)||f||_{L^1(\O)}.\end{equation*}
\end{theo}
 \begin{proof}
Let $\ORA u_j\in{\mathcal V}$ be such that $\ORA u_j\to \ORA u$ in $L^N(\O)^N$,   and let $f_j\in L^\infty(\O)$ be such that $|f_j(x)\LEQ|f(x)|$ and $f_j(x)\to f(x)$ a.e, $x\in\O.$\\
Let us consider the functions $\omega_j\in W^2L^m(\O)\cap H^1_0(\O)$ $\forall\, m<+\infty$ satisfying
\begin{equation*}-\Delta\omega_j+\ORA u_j\cdot\nabla\omega_j=f_j.\end{equation*}
Then
\begin{equation*}\int_\O|\nabla T_k(\omega_j)|^2dx+\int_\O\ORA u_j\cdot\nabla\int_0^{\omega_j}T_k(\s)d\s=\int_\O T_k(\omega_j)f_j(x)dx,\end{equation*}
and  since by integration by parts we have 
$\DST\int_\O\ORA u_j\cdot\nabla\int_0^{\omega_j}T_k(\s)d\s=0$ we get
\begin{equation}\label{eq1001xx}
\int_\O|\nabla T_k(\omega_j)|^2dx\LEQ k\int_\O|f(x)|dx.
\end{equation}
By the Poincar\'e-Sobolev inequality
\begin{equation*}\int_\O| T_k(\omega_j)|^2dx\LEQ c_\O k\int_\O|f(x)|dx.\end{equation*}
By Proposition \ref{p11}, we deduce that
\begin{equation*}\|\nabla \omega_j||_{L^{N',\infty}(\O)}\LEQ c_\O\int_\O|f(x)|dx.\end{equation*}
Since $\ORA u_j\to\ORA u$ in $L^N(\O)^N$ and by compactness  $\omega_j\to\omega $ in $L^{N'}(\O)$\\ \Big(note that $W^1L^{N',\infty}(\O)\hookrightarrow L^{\frac N{N-2},\infty}(\O)$ for $N\GEQ3$ ( see \cite{RakoRR})\Big), we then have for all $\phi\in C^2(\OV\O)$ with $\phi=0$ on $\p\O$,
\begin{equation*}\int_\O\omega_j \ORA u_j\nabla\phi dx\xrightarrow[j\to+\infty]{}\int_\O\omega\ORA u\cdot\nabla\phi dx,\end{equation*} 
so that  $\omega$ solves (\ref{eq1XV}) for $V\equiv0$. \end{proof}

As for the case $\ORA u=0$, the additional regularity  questions are numerous; for instance, does there exists a datum $f\in L^1(\O;\d)$ for which we have

\begin{equation*}\int_\O|\nabla\omega|dx=+\infty\hbox{ or }\int_\O|\omega|^{N'}dx=+\infty?\end{equation*}

For the explosion of the norm of $\omega$ in $L^{N'}$, we can adopt the same proof as for the explosion of the gradient in $L^1(\O)$. We have
\begin{theo}\label{t6}{\bf (blow-up in $L^{N'}(\O)$)}\\
	Assume that $N\GEQ3,\ \ORA u\in C^{0,\a}(\OV\O)^N,\ \a>0$, $V=0$. Then there exists  a function $f$ in $L^1_+(\O;\d)\backslash L^1(\O,\d(1+|\ln\d|)^{\frac1{N'}})$ such that the very weak solution $\omega$ found in Theorem~\ref{t2.3} satisfies that $\omega$ does not belong to  $L^{N'}(\O)).$
\end{theo}
First we recall the following result that can be proved as in\,\cite{RaAMO}\,(see\,also\,\cite{RabookLinear}).
\begin{lem}\label{l100}
	Let $N\GEQ3$. There exists a function $g\in L^N_+(\O)$ such that the unique solution $\psi\in W^2L^N(\O)\cap H^1_0(\O)$ of $-\Delta\psi-\ORA u\cdot\nabla\psi=g$ satisfies :
	\begin{enumerate}
		\item $\psi(x)\GEQ c_1\d(x),\ \forall\,x$,
		\item $\DST\sup\left\{\dfrac{\psi(x)}{\d(x)},\ x\in\O\right\}=+\infty,$
		\item $L^1_+(\O;\d)\backslash L^1(\O,\psi)$ is non empty.
	\end{enumerate}
\end{lem}
Arguing as in   \cite{RaAMO},   \cite{AbergelRako}, we consider $g_k=T_k(g)$, $g$ given by Lemma \ref{l100} such  that \begin{equation*}\hbox{$\psi_k\in W^2L^q(\O)\cap H^1_0(\O)$ for all $q<+\infty$, $-\Delta\psi_k-\ORA u\nabla\phi_k=T_k(g)$.}\end{equation*}

Now assume   that for all $f\in L^1(\O;\d)$, we have for  the v.w.s. $||\omega||_{L^{N'}}<+\infty$. Then by the Banach-Steinhauss uniform boundedness theorem  as in \cite{AbergelRako, RaAMO}, we derive the existence of a constant $c_0>0$ such that
\begin{equation*}||\omega||_{L^{N'}}\LEQ c_0\int_\O|f|\d dx\quad\forall\,f\in L^1(\O;\d),\end{equation*}
and 
\begin{equation*}\int_\O\omega\big[-\Delta\phi-\ORA u\cdot\nabla\phi\,\big]\,dx=\int_\O f\phi\,dx\qquad\forall\,\phi\in W^2L^{N,1}(\O)\cap H^1_0(\O).\end{equation*}

Taking $\phi=\psi_k$, and $f\in L^1_+(\O;\d)\backslash L^1_+(\O,\psi)$ we see that
\begin{equation}\label{eq100}
0\LEQ \int_\O f\psi_k=\int_\O\omega g_k\,dx\LEQ||\omega||_{L^{N'}}||g||_{L^N}<+\infty.
\end{equation}
Letting $k\to+\infty$, we have a contradiction since
\begin{equation*}\underline{\lim}_{k\to+\infty}\int_\O f\psi_k\GEQ\int_\O f\psi\,dx=+\infty,\end{equation*}
which concludes the proof Theorem \ref{t6}. \end{proof}
\begin{rem}\ \\
	We can give the more precise information that the function $f$ in Theorem \ref{t6}    is not in $L^1(\O;\d(1+|\ln\d|)^{\frac1{N'}})$ (due to Theorem \ref{t1}).
\end{rem}
\subsection{Some final conclusion}
In the opinion of the authors, the results of this paper  open many different further applications in different directions.
Besides the consideration of the list of concrete problems mentioned in the Introduction other studies can be carried out. For instance, following the arguments of   \cite{DiRa2}, 
it is not complicated to extend many of the results of this paper to the study of semilinear problems for which equation (\ref{eq1}) is replaced by the equation
\begin{equation*}-\Delta\omega+\ORA u\cdot\nabla \omega+V\omega+\b(x,u,\nabla u)=f(x)\hbox{ on }\O,\end{equation*}
when $\b$ is nondecreasing in $u$. Moreover the consideration of parabolic problems of the type
\begin{equation*}\omega_t-\Delta\omega+\ORA u\cdot\nabla\omega+V\omega+\b(x,u,\nabla u)=f(t,x)\ on\ \O\times(0,T),\end{equation*}
can be carried out with the help of the results of this paper (mainly the $L^1(\O;\d)$-accretiveness property of the associates operator). The details will be given in some separate work by the authors.\\

\section*{Acknowledgments}
The research of D. G\'omez-Castro was supported by a FPU fellowship from the Spanish government. The research of J.I. D\'iaz and D. G\'omez-Castro was partially supported by the project ref. MTM 2014-57113-P of the DGISPI (Spain). Roger Temam was partially supported by NSF grant DMS 1510249 and by the Research Fund of Indiana University.\\

{\it 
	After this article was completed, we learned, during a presentation at a conference (March 29-30, 2017) in Poitiers, France, that L. Orsina and A. Ponce have obtained related results in the references  \cite{OP1, OP2}. Their results deal essentially with the existence and the use of the normal derivative for any  function  in $W^{1,1}_0(\O)$. In the improved version \cite{OP2} that they  sent to us by the authors after the conference, they add a new proposition (Proposition 2.7) which provides a complement to our results since it gives a qualitative property for $\omega$ solution of our problem (16) if the velocity $\ORA u$ is zero when the solution  
	is integrable on the whole domain (for a right hand side $f$ in $L^1(\O , \d(1 + | Log\d |))$. We note also that J.I.D\'iaz has already derived results similar to their  Proposition 2.7 in \cite{Dinter, 12b}.}

\end{document}